\begin{document}
%%%%%%%%%%%%%%%%%%%%%%%%%%%%%%%%%%%%%%%%%%%%%%%%%%%%%%%%%%%%%%%%%%%%%%%
%%%%%%%%%%%%%%%%%%%%%%%%%     Macros      %%%%%%%%%%%%%%%%%%%%%%%%%%%%%
%%%%%%%%%%%%%%%%%%%%%%%%%%%%%%%%%%%%%%%%%%%%%%%%%%%%%%%%%%%%%%%%%%%%%%%
\def\e#1\e{\begin{equation}#1\end{equation}}
\def\ea#1\ea{\begin{align}#1\end{align}}
\def\eq#1{{\rm(\ref{#1})}}
\theoremstyle{plain}% default
\newtheorem{thm}{Theorem}[section]
\newtheorem{lem}[thm]{Lemma}
\newtheorem{prop}[thm]{Proposition}
\newtheorem{cor}[thm]{Corollary}
\theoremstyle{definition}
\newtheorem{dfn}[thm]{Definition}
\newtheorem{ass}[thm]{Assumption}
\newtheorem{quest}[thm]{Question}
\newtheorem{rem}[thm]{Remark}
\newtheorem{ex}[thm]{Example}
\def\na{{\rm na}}
\def\stk{{\rm stk}}
\def\ind{{\rm \kern.05em ind}}
\def\fin{{\rm fin}}
\def\Ho{{\rm Ho}}
\def\Po{{\rm Po}}
\def\uni{{\rm uni}}
\def\rk{{\rm rk}}
\def\vi{{\rm vi}}
\def\opp{{\rm op}}
\def\dim{\mathop{\rm dim}\nolimits}
\def\cha{\mathop{\rm char}}
\def\Im{\mathop{\rm Im}}
\def\GL{\mathop{\rm GL}\nolimits}
\def\Spec{\mathop{\rm Spec}\nolimits}
\def\Stab{\mathop{\rm Stab}\nolimits}
\def\Sch{\mathop{\rm Sch}\nolimits}
\def\coh{\mathop{\rm coh}}
\def\Hom{\mathop{\rm Hom}\nolimits}
\def\Iso{\mathop{\rm Iso}\nolimits}
\def\Aut{\mathop{\rm Aut}}
\def\End{\mathop{\rm End}}
\def\ad{\mathop{\rm ad}}
\def\Mor{\mathop{\rm Mor}\nolimits}
\def\CF{\mathop{\rm CF}\nolimits}
\def\CFi{\mathop{\rm CF}\nolimits^{\rm ind}}
\def\LCF{\mathop{\rm LCF}\nolimits}
\def\dLCF{{\dot{\rm LCF}}\kern-.1em\mathop{}\nolimits}
\def\SF{\mathop{\rm SF}\nolimits}
\def\SFa{\mathop{\rm SF}\nolimits_{\rm al}}
\def\SFai{\mathop{\rm SF}\nolimits_{\rm al}^{\rm ind}}
\def\uSF{\mathop{\smash{\underline{\rm SF\!}\,}}\nolimits}
\def\uSFi{\mathop{\smash{\underline{\rm SF\!}\,}}\nolimits^{\rm ind}}
\def\oSF{\mathop{\bar{\rm SF}}\nolimits}
\def\oSFa{\mathop{\bar{\rm SF}}\nolimits_{\rm al}}
\def\oSFai{{\ts\bar{\rm SF}{}_{\rm al}^{\rm ind}}}
\def\uoSF{\mathop{\bar{\underline{\rm SF\!}\,}}\nolimits}
\def\uoSFa{\mathop{\bar{\underline{\rm SF\!}\,}}\nolimits_{\rm al}}
\def\uoSFi{\mathop{\bar{\underline{\rm SF\!}\,}}\nolimits_{\rm al}^{\rm
ind}}
\def\LSF{\mathop{\rm LSF}\nolimits}
\def\LSFa{\mathop{\rm LSF}\nolimits_{\rm al}}
\def\uLSF{\mathop{\smash{\underline{\rm LSF\!}\,}}\nolimits}
\def\dLSF{{\dot{\rm LSF}}\kern-.1em\mathop{}\nolimits}
\def\doLSF{{\dot{\bar{\rm LSF}}}\kern-.1em\mathop{}\nolimits}
\def\duoLSF{{\dot{\bar{\underline{\rm LSF\!}\,}}}\kern-.1em\mathop{}
\nolimits}
\def\ouLSF{{\bar{\underline{\rm LSF\!}\,}}\kern-.1em\mathop{}
\nolimits}
\def\dLSFi{{\dot{\rm LSF}}\kern-.1em\mathop{}\nolimits^{\rm ind}}
\def\dLSFa{{\dot{\rm LSF}}\kern-.1em\mathop{}\nolimits_{\rm al}}
\def\doLSFa{{\dot{\bar{\rm LSF}}}\kern-.1em\mathop{}\nolimits_{\rm al}}
\def\dLSFai{{\dot{\rm LSF}}\kern-.1em\mathop{}\nolimits^{\rm ind}_{\rm
al}}
\def\duLSF{{\dot{\underline{\rm LSF\!}\,}}\kern-.1em\mathop{}\nolimits}
\def\duLSFi{{\dot{\underline{\rm LSF\!}\,}}\kern-.1em\mathop{}
\nolimits^{\rm ind}}
\def\oLSF{\mathop{\bar{\rm LSF}}\nolimits}
\def\oLSFa{\mathop{\bar{\rm LSF}}\nolimits_{\rm al}}
\def\oLSFai{\mathop{\bar{\rm LSF}}\nolimits_{\rm al}^{\rm ind}}
\def\uoLSF{\mathop{\bar{\underline{\rm LSF\!}\,}}\nolimits}
\def\uoLSFa{\mathop{\bar{\underline{\rm LSF\!}\,}}\nolimits_{\rm al}}
\def\uoLSFi{\mathop{\bar{\underline{\rm LSF\!}\,}}\nolimits_{\rm
al}^{\rm ind}}
\def\Ext{\mathop{\rm Ext}\nolimits}
\def\id{\mathop{\rm id}\nolimits}
\def\ha{{\ts\frac{1}{2}}}
\def\Obj{\mathop{\rm Obj\kern .1em}\nolimits}
\def\fObj{\mathop{\mathfrak{Obj}\kern .05em}\nolimits}
\def\fExact{\mathop{\mathfrak{Exact}\kern .05em}\nolimits}
\def\modA{\text{\rm mod-$A$}}
\def\modKQ{\text{\rm mod-$\K Q$}}
\def\modKQI{\text{\rm mod-$\K Q/I$}}
\def\nilKQ{\text{\rm nil-$\K Q$}}
\def\nilKQI{\text{\rm nil-$\K Q/I$}}
\def\bs{\boldsymbol}
\def\ge{\geqslant}
\def\le{\mathord{\leqslant}}
\def\pr{{\mathop{\preceq}\nolimits}}
\def\npr{{\mathop{\npreceq}\nolimits}}
\def\tl{\trianglelefteq\nobreak}
\def\ps{\precsim\nobreak}
\def\ls{{\mathop{\lesssim\kern .05em}\nolimits}}
\def\bF{{\mathbin{\mathbb F}}}
\def\N{{\mathbin{\mathbb N}}}
\def\Z{{\mathbin{\mathbb Z}}}
\def\Q{{\mathbin{\mathbb Q}}}
\def\C{{\mathbin{\mathbb C}}}
\def\K{{\mathbin{\mathbb K\kern .05em}}}
\def\KP{{\mathbin{\mathbb{KP}}}}
\def\cC{{\mathbin{\mathcal C}}}
\def\A{{\mathbin{\mathcal A}}}
\def\B{{\mathbin{\mathcal B}}}
\def\F{{\mathbin{\mathcal F}}}
\def\G{{\mathbin{\mathcal G}}}
\def\H{{\mathbin{\mathcal H}}}
\def\L{{\mathbin{\mathcal L}}}
\def\M{{\mathcal M}}
\def\cP{{\mathbin{\mathcal P}}}
\def\cQ{{\mathbin{\mathcal Q}}}
\def\cR{{\mathbin{\mathcal R}}}
\def\g{{\mathbin{\mathfrak g}}}
\def\h{{\mathbin{\mathfrak h}}}
\def\n{{\mathbin{\mathfrak n}}}
\def\fD{{\mathbin{\mathfrak D}}}
\def\fE{{\mathbin{\mathfrak E}}}
\def\fF{{\mathbin{\mathfrak F}}}
\def\fG{{\mathbin{\mathfrak G}}}
\def\fH{{\mathbin{\mathfrak H}}}
\def\fM{{\mathbin{\mathfrak M}}}
\def\fR{{\mathbin{\mathfrak R}}}
\def\fS{{\mathbin{\mathfrak S}}}
\def\fT{{\mathbin{\mathfrak T}}}
\def\fU{{\mathbin{\mathfrak U\kern .05em}}}
\def\fV{{\mathbin{\mathfrak V}}}
\def\sIp{{\smash{\sst(I,\pr)}}}
\def\sIt{{\smash{\sst(I,\tl)}}}
\def\sJp{{\smash{\sst(J,\pr)}}}
\def\sKt{{\smash{\sst(K,\tl)}}}
\def\sIl{{\smash{\sst(I,\ls)}}}
\def\sJl{{\smash{\sst(J,\ls)}}}
\def\al{\alpha}
\def\be{\beta}
\def\ga{\gamma}
\def\de{\delta}
\def\bde{\bar\delta}
\def\io{\iota}
\def\ep{\epsilon}
\def\la{\lambda}
\def\ka{\kappa}
\def\th{\theta}
\def\ze{\zeta}
\def\si{\sigma}
\def\om{\omega}
\def\De{\Delta}
\def\La{\Lambda}
\def\Om{\Omega}
\def\Ga{\Gamma}
\def\Si{\Sigma}
\def\Th{\Theta}
\def\Up{\Upsilon}
\def\ts{\textstyle}
\def\sst{\scriptscriptstyle}
\def\sm{\setminus}
\def\bu{\bullet}
\def\op{\oplus}
\def\ot{\otimes}
\def\bigop{\bigoplus}
\def\bigot{\bigotimes}
\def\iy{\infty}
\def\ra{\rightarrow}
\def\ab{\allowbreak}
\def\longra{\longrightarrow}
\def\dashra{\dashrightarrow}
\def\hookra{\hookrightarrow}
\def\lt{\ltimes}
\def\el{{\mathbin{\ell\kern .08em}}}
\def\t{\times}
\def\ci{\circ}
\def\ti{\tilde}
\def\md#1{\vert #1 \vert}
%%%%%%%%%%%%%%%%%%%%%%%%%%%%%%%%%%%%%%%%%%%%%%%%%%%%%%%%%%%%%%%%%%%%%%%
%%%%%%%%%%%%%%%%%%%%%     Text of paper    %%%%%%%%%%%%%%%%%%%%%%%%%%%%
%%%%%%%%%%%%%%%%%%%%%%%%%%%%%%%%%%%%%%%%%%%%%%%%%%%%%%%%%%%%%%%%%%%%%%%
\title{Realizing Enveloping Algebras via Moduli Stacks}
\author{Liqian Bai, Fan Xu}
\date{}
\maketitle

\begin{abstract}
Let $\CF(\fObj_\A)$ denote the vector space of $\Q$-valued constructible functions on a given stack $\fObj_\A$ for an exact category $\A$. By using the Ringel--Hall algebra approach, Joyce proved that $\CF(\fObj_\A)$ is an associative $\mathbb{Q}$-algebra via the convolution multiplication and the subspace $\CFi(\fObj_\A)$ of constructible functions supported on indecomposables is a Lie subalgebra of $\CF(\fObj_\A)$ in \cite{joyceA2}. In this paper, we show that there is a subalgebra $\CF^{\text{KS}}(\fObj_\A)$ of $\CF(\fObj_\A)$ isomorphic to the universal enveloping algebra of $\CFi(\fObj_\A)$. Moreover we construct a comultiplication on $\CF^{\text{KS}}(\fObj_\A)$ and a degenerate form of Green's theorem. This generalizes Joyce's work, as well as results of \cite{dxx10}.
\end{abstract}

\section{Introduction}
\label{ai1}
Let $\Lambda$ be a finite dimensional $\mathbb{C}$-algebra such that it is a representative-finite algebra, i.e., there are finitely many finite dimensional indecomposable $\Lambda$-modules up to isomorphism. Let $\mathcal{I}(\Lambda)=\{X_1,\ldots,X_n\}$ be a set of representatives. Let $\mathcal{P}(\Lambda)$ be a set of representatives for the all isomorphism classes of $\Lambda$-modules. There is a free $\mathbb{Z}$-module $R(\Lambda)$ with a basis $\{u_X~|~X\in\mathcal{P}(\Lambda)\}$. Using the Euler characteristic, $\mathcal{P}(\Lambda)$ can be endowed with a multiplicative structure (see \cite{schofield} and \cite{lusztigkyoto90}). The multiplication is defined by
$$
u_X\cdot u_Y=\sum\limits_{A\in\mathcal{P}(\Lambda)}\chi(V(X,Y;A))u_A,
$$
where $V(X,Y;A)=\{0\subseteq A_1\subseteq A~|~A_1\cong X, A/A_1\cong Y\}$ and $\chi(V(X,Y;A))$ is the Euler characteristic of $V(X,Y;A)$. Thus $(R(\Lambda),+,\cdot)$ is a $\mathbb{Z}$-algebra with identity $u_0$. Let $L(\Lambda)$ be a submodule of $R(\Lambda)$ which is spanned by $\{u_X~|~X\in\mathcal{I}(\Lambda)\}$. It follows that $L(\Lambda)$ is a Lie subalgebra of $R(\Lambda)$ with the Lie bracket $[u_X,u_Y]=u_X\cdot u_Y-u_Y\cdot u_X$. Riedtmann studied the universal enveloping algebra of $L(\Lambda)$. Let $R(\Lambda)^{\prime}$ be the subalgebra of $R(\Lambda)$ generated by $\{u_X~|~X\in\mathcal{I}(\Lambda)\}$. Riedtmann showed that $R(\Lambda)^{\prime}$ is isomorphic to the universal enveloping algebra of $L(\Lambda)$. These results have been generalized by two ways.

Joyce generalized Riedtmann's work in the context of constructible functions (also stack functions) over moduli stacks. In \cite{joyceJLMS06}, Joyce defined the Euler characteristics of constructible sets in $\mathbb{K}$-stacks, pushforwards and pullbacks for constructible functions, where $\mathbb{K}$ is an algebraically closed field. Let $\A$ be an abelian category and $\CF(\fObj_\A)$ the vector space of $\mathbb{Q}$-valued constructible functions on $\fObj_\A(\mathbb{K})$, where $\fObj_\A$ is the moduli stack of objects in $\A$ and $\fObj_\A(\mathbb{K})$ the collection of isomorphism classes of objects in $\A$. Joyce proved that $\CF(\fObj_\A)$ is an associative $\mathbb{Q}$-algebra. The algebra $\CF(\fObj_\A)$ can be viewed as a variant of the Ringel-Hall algebra. Let $\CFi(\fObj_\A)$ be a subspace of $\CF(\fObj_\A)$ satisfying the condition that $f([X])\neq0$ implies $X$ is an indecomposable object in $\A$ for every $f\in\CFi(\fObj_\A)$. Then $\CFi(\fObj_\A)$ is shown to be a Lie subalgebra of $\CF(\fObj_\A)$ (\cite[Theorem 4.9]{joyceA2}). Let $\CF_{\rm fin}(\fObj_\A)$ be the subspace of $\CF(\fObj_\A)$ such that
$$
\text{supp}(f)=\big\{[X]\in\fObj_\A(\mathbb{K})~|~f([X])\neq0\big\}
$$
is a finite set for every $f\in\CF_{\rm fin}(\fObj_\A)$. Let
$$
\CFi_{\rm fin}(\fObj_\A)=\CF_{\rm fin}(\fObj_\A)\cap\CFi(\fObj_\A).
$$
Assume that a conflation $X\rightarrow Y\rightarrow Z$ in $\A$ implies that the number of isomorphism classes of $Y$ is finite for all $X,Z\in\Obj(A)$. With the assumption, Joyce proved that $\CF_{\rm fin}(\fObj_\A)$ is an associative algebra and $\CFi_{\rm fin}(\fObj_\A)$ a Lie subalgebra of $\CF_{\rm fin}(\fObj_\A)$. It follows that $\CF_{\rm fin}(\fObj_\A)$ is isomorphic to the universal enveloping algebra of $\CFi_{\rm fin}(\fObj_\A)$. Joyce defined a comultiplication on $\CF_{\rm fin}(\fObj_\A)$ and proved that $\CF_{\rm fin}(\fObj_\A)$ is a bialgebra.

In \cite{dxx10}, the authors extended Riedtmann's results to algebras of representation-infinite type, i.e., the cardinality of isomorphism classes of indecomposable finite dimensional $\Lambda$-modules is infinite. Let $R(\Lambda)$ be the $\mathbb{Z}$-module spanned by $1_{\mathcal{O}}$, where $1_{\mathcal{O}}$ is the characteristic function over a constructible set of stratified Krull-Schmidt $\mathcal{O}$ (see \cite[Section 3]{dxx10}). The subspace $L(\Lambda)$ of $R(\Lambda)$ is spanned by $1_{\mathcal{O}}$, where $\mathcal{O}$ are indecomposable constructible sets. The multiplication is defined by
$$
1_{\mathcal{O}_1}\cdot1_{\mathcal{O}_2}(X)=\chi(V(\mathcal{O}_1,\mathcal{O}_2;X))
$$
where $X$ is a $\Lambda$-module. Then $R(\Lambda)$ is an associative algebra with identity $1_0$ and $L(\Lambda)$ a Lie subalgebra of $R(\Lambda)$ with Lie bracket. The algebra $R(\Lambda)\otimes\mathbb{Q}$ is the universal enveloping algebra of $L(\Lambda)\otimes\mathbb{Q}$. The authors gave the degenerate form of Green's formula and established the comultiplication of $R(\Lambda)$ in \cite{dxx10}.

The goal of this paper is to explicitly construct the enveloping algebra of $\CFi(\fObj_\A)$ by the methods in \cite{dxx10}. Let $\A$ be an exact category satisfying some properties. Let $X\xrightarrow{f}Y\xrightarrow{g}Z$ be a conflation in $\A$ and $\Aut(X\xrightarrow{f}Y\xrightarrow{g}Z)$ the automorphism group of $X\xrightarrow{f}Y\xrightarrow{g}Z$. The key idea in \cite{dxx10} is that $V(X,Y;L)$ has the same Euler characteristic as its fixed point set under the action of $\mathbb{C}^*$. In this paper, we consider exact categories instead of categories of modules. Then as a substitute of the action of $\mathbb{C}^*$, we analyze the action of a maximal torus of $\Aut(X\xrightarrow{f}Y\xrightarrow{g}Z)$ on $X\xrightarrow{f}Y\xrightarrow{g}Z$. The universal enveloping algebra of $\CFi(\fObj_\A)$ can be endowed with a comultiplication structure (Definition \ref{def4.1}). It is compatible with multiplication (Theorem \ref{thm4.6}). The compatibility can be viewed as the degenerate form of Green's theorem on Ringel-Hall algebras (see \cite{green} or \cite{ringel96}).

The paper is organized as follows. In Section 2 we recall the basic concepts about stacks, constructible sets and constructible functions. In Section 3 we define the constructible sets of  stratified Krull-Schmidt. We study the the subspace $\CF^{\text{KS}}(\fObj_\A)$ of $\CF(\fObj_\A)$ generated by characteristic functions supported on constructible sets of stratified Krull-Schmidt. Then $\CF^{\text{KS}}(\fObj_\A)$ provides a realization of the universal enveloping algebra of $\CFi(\fObj_\A)$. In Section $4$ we give the comultiplication $\Delta$ in $\CF^{\rm KS}(\fObj_\A)$ and prove that $\Delta$ is an algebra homomorphism.

\section{Preliminaries}
\label{ai2}

\subsection{Constructible sets and constructible functions}
From now on, let $\mathbb{K}$ be an algebraically closed field with characteristic zero. We recall the definitions of constructible sets and constructible functions on $\mathbb{K}$-stacks. These definitions are taken from Joyce \cite{joyceJLMS06}.
\begin{dfn}
Let $\mathcal{F}$ be a $\mathbb{K}$-stack. Let $\mathcal{F}(\mathbb{K})$ denote the set of $2$-isomorphism classes $[x]$ where $x:\Spec\mathbb{K}\rightarrow\mathcal{F}$ are $1$-morphisms. Every element of $\mathcal{F}(\mathbb{K})$ is called a geometric point (or $\mathbb{K}$-point) of $\mathcal{F}$. For $\mathbb{K}$-stacks $\mathcal{F}$ and $\mathcal{G}$, let $\phi: \mathcal{F}\rightarrow\mathcal{G}$ be a 1-morphism of $\mathbb{K}$-stacks. Then $\phi$ induces a map $\phi_{*}:\mathcal{F}(\mathbb{K})\rightarrow\mathcal{G}(\mathbb{K})$ by $[x]\mapsto [\phi\circ x]$.

For any $[x]\in\mathcal{F}(\mathbb{K})$, let $\Iso_{\mathbb{K}}(x)$ denote the group of 2-isomorphisms $x\rightarrow x$ which is called a stabilizer group. For ease of notations, $\Iso_{\mathbb{K}}(x)$ is used to denote the group instead of $\Iso_{\mathbb{K}}([x])$. If $\Iso_{\mathbb{K}}(x)$ is an affine algebraic $\mathbb{K}$-group for each $[x]\in\mathcal{F}(\mathbb{K})$, then we say $\mathcal{F}$ with affine geometric stabilizers. A morphism of algebraic $\mathbb{K}$-groups $\phi_x:\Iso_{\mathbb{K}}(x)\rightarrow \Iso_{\mathbb{K}}(\phi_{*}(x))$ is induced by $\phi: \mathcal{F}\rightarrow\mathcal{G}$ for each $[x]\in\mathcal{F}(\mathbb{K})$.

A subset $\mathcal{O}\subseteq\mathcal{F}(\mathbb{K})$ is called a constructible set if $\mathcal{O}=\amalg_{i=1}^{n} \mathcal{F}_{i}(\mathbb{K})$ for some $n\in\mathbb{N}^+$, where every $\mathcal{F}_{i}$ is a finite type algebraic $\mathbb{K}$-substack of $\mathcal{F}$. A subset $S\subseteq\mathcal{F}(\mathbb{K})$ is called a locally constructible set if $S\cap\mathcal{O}$ are constructible for all constructible subsets $\mathcal{O}\subseteq\mathcal{F}(\mathbb{K})$. If $\mathcal{O}_1$ and $\mathcal{O}_2$ are constructible sets, then $\mathcal{O}_1\cup\mathcal{O}_2$, $\mathcal{O}_1\cap\mathcal{O}_2$ and $\mathcal{O}_1\setminus\mathcal{O}_2$ are constructible sets by \cite[Lemma 2.4]{joyceJLMS06}.

Let $\Phi:\mathcal{F}(\mathbb{K})\rightarrow\mathcal{G}(\mathbb{K})$ be a map. The set $\Gamma_{\Phi}=\{(x,\Phi(x))~|~x\in\mathcal{F}(\mathbb{K})\}$ is called the graph of $\Phi$. Recall that $\Phi$ is a pseudomorphism if $\Gamma_{\Phi}\bigcap(\mathcal{O}\times\mathcal{G}(\mathbb{K}))$ are constructible for all constructible subsets $\mathcal{O}\subseteq\mathcal{F}(\mathbb{K})$. By \cite[Proposition~4.6]{joyceJLMS06}, if $\phi:\mathcal{F}\rightarrow\mathcal{G}$ is a 1-morphism then $\phi_*$ is a pseudomorphism, $\Phi(\mathcal{O})$ and $\Phi^{-1}(y)\cap\mathcal{O}$ are constructible sets for all constructible subset $\mathcal{O}\subseteq\mathcal{F}(\mathbb{K})$ and $y\in\mathcal{G}(\mathbb{K})$. If $\Phi$ is a bijection and $\Phi^{-1}$ is also a pseudomorphism, we call $\Phi$ a pseudoisomorphism.
\end{dfn}

Then we will recall the definition of the na\"{\i}ve Euler characteristic of a constructible subset of $\mathcal{F}(\mathbb{K})$ in \cite{joyceJLMS06}.

This is a useful result due to Rosenlicht \cite{rosenlicht63}.
\begin{thm}\label{rosenlicht}
Let $G$ be an algebraic $\mathbb{K}$-group acting on a $\mathbb{K}$-variety $X$. There exist an open dense $G$-invariant subset $X_{1}\subseteq X$ and a $\mathbb{K}$-variety $Y$ such that there is a morphism of varieties $\phi:X_{1}\rightarrow Y$ which induces a bijection form $X_{1}(\mathbb{K})/G$ to $Y(\mathbb{K})$.
\end{thm}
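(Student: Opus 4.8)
The plan is to take $Y$ to be a model of the field of $G$-invariant rational functions on $X$ and to prove that, away from a proper $G$-invariant closed subset, the resulting rational map has the $G$-orbits as its fibers. I would first reduce to the case of an irreducible $X$: the irreducible components of top dimension are finite in number and permuted by $G$, so it suffices to treat one $G$-orbit of components at a time, and we may assume $X$ irreducible. The function $x \mapsto \dim Gx$ is lower semicontinuous (the stabilizer dimension being upper semicontinuous) and $G$-invariant, so the locus where it attains its maximum $r$ is $G$-invariant, open and dense; restricting $X$ to it, I may further assume that every orbit has dimension exactly $r$.

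Next I would consider $L = \mathbb{K}(X)^G$, the field of $G$-invariant rational functions. As an intermediate field of the finitely generated extension $\mathbb{K}(X)/\mathbb{K}$ it is again finitely generated over $\mathbb{K}$, and by the standard transcendence-degree count for group actions one has $\mathrm{trdeg}_{\mathbb{K}} L = \dim X - r$. Choosing an irreducible $\mathbb{K}$-variety $Y$ with $\mathbb{K}(Y) \cong L$, the inclusion $L \subseteq \mathbb{K}(X)$ determines a dominant rational map $\phi \colon X \dashrightarrow Y$ with $\dim Y = \dim X - r$. Since $\phi$ is $G$-invariant as a rational map, I can shrink its domain of definition to a $G$-invariant open dense $X_1 \subseteq X$ on which $\phi$ is a morphism that is constant on $G$-orbits; concretely one intersects the domain with its $G$-translates and discards the locus where $G$-invariance fails, which is contained in a proper closed subset.

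The heart of the argument, and the step I expect to be the main obstacle, is to arrange that the fibers of $\phi$ over $X_1$ are exactly single $G$-orbits rather than unions of several. I would compare two subvarieties of $X \times X$: the orbit relation $R$, the image of $G \times X \to X \times X$, $(g,x) \mapsto (x, gx)$, and the fiber product $Z = X \times_Y X$. Because $\phi$ is $G$-invariant we have $R \subseteq Z$. A dimension count gives $\dim R = \dim X + r$ (each $x$ contributes its orbit $Gx$ of dimension $r$), while the generic fiber of $Z \to Y$ is the square of an $r$-dimensional fiber of $\phi$, so $\dim Z = \dim Y + 2r = \dim X + r$ as well. Hence $R$ is dense in the component of $Z$ dominating $Y$; using that in characteristic zero the generic fiber of $\phi$ may be taken geometrically irreducible, I conclude $R = Z$ over a dense open subset of $Y$. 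Translated back, this says precisely that for $x$ in a $G$-invariant open dense subset, every point of $\phi^{-1}(\phi(x))$ lying in that subset belongs to $Gx$; after removing the image of the remaining locus and replacing $X_1$ by a smaller $G$-invariant open dense subset, the fibers of $\phi|_{X_1}$ are single orbits.

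Finally I would address surjectivity and the passage to $\mathbb{K}$-points. The image $\phi(X_1)$ is a constructible dense subset of $Y$; replacing $Y$ by an open dense subset contained in this image and $X_1$ by the corresponding preimage (still $G$-invariant, open and dense), $\phi$ becomes surjective. Since $\mathbb{K}$ is algebraically closed of characteristic zero, $\phi$ is surjective on $\mathbb{K}$-points and, by the previous step, two points of $X_1(\mathbb{K})$ have the same image if and only if they lie in one $G$-orbit. Therefore $\phi$ induces the desired bijection $X_1(\mathbb{K})/G \xrightarrow{\sim} Y(\mathbb{K})$, completing the proof.
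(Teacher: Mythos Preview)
The paper does not prove this theorem; it simply attributes it to Rosenlicht \cite{rosenlicht63} and uses it as a black box. So there is no proof in the paper to compare your proposal against.

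That said, your sketch is essentially the classical argument found in Rosenlicht's paper and standard references. A few remarks on the write-up. First, the assertion $\mathrm{trdeg}_{\mathbb{K}}\,\mathbb{K}(X)^G = \dim X - r$ is doing real work: it is equivalent to saying that invariant rational functions separate generic orbits, which is close to the statement you are proving. In a full proof you would either cite this as a separate known lemma or, more in the spirit of Rosenlicht, avoid asserting it up front and instead deduce it from the $R$ versus $Z$ comparison (one shows directly that $R$ and the graph of the equivalence relation defined by $\phi$ coincide generically, without presupposing $\dim Y$). Second, when you pass from ``$R$ dense in a component of $Z$'' to ``$R=Z$ generically'', you need $R$ to be closed in the relevant open set; this holds once all orbits have the same dimension, but it deserves a sentence. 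Third, the use of characteristic zero to get geometrically irreducible generic fibers is not really needed here: Rosenlicht's theorem holds in arbitrary characteristic, and the separation of orbits comes from the equality of dimensions, not from separability. With those caveats, your outline is sound and matches the standard route; for the purposes of this paper, citing Rosenlicht (as the authors do) is entirely appropriate.
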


Let $X$ be a separated $\mathbb{K}$-scheme of finite type, the Euler characteristic $\chi(X)$ of $X$ is defined by
$$
\chi(X)=\sum\limits_{i=0}^{2\dim X}(-1)^{i}\dim_{\mathbb{Q}_{p}}H_{\text{cs}}^{i}(X,\mathbb{Q}_{p}),
$$
where $p$ is a prime number, $\mathbb{Z}_p=\lim\limits_{\longleftarrow}$ $\mathbb{Z}/p^r\mathbb{Z}$ is the ring of $p$-adic integers, $\mathbb{Q}_p$ is its field of fractions and $H_{\text{cs}}^{i}(X,\mathbb{Q}_{p})$ are the compactly-supported $p$-adic cohomology groups of $X$ for $i\geq0$.

The following properties of Euler characteristic follow \cite{dxx10} and \cite{joyceJLMS06}.
\begin{prop}
\label{prop1}
Let $X$, $Y$ be separated, finite type $\mathbb{K}$-schemes and $\varphi:X\rightarrow Y$ a morphism of schemes. Then:

(1) If $Z$ is a closed subscheme of $X$, then $\chi(X)=\chi(X\setminus Z)+\chi(Z)$.

(2) $\chi(X\times Y)=\chi(X)\times\chi(Y)$.

(3) Let $X$ be a disjoint union of finitely many subschemes $X_{1},\ldots,X_{n}$, we have
$$
\chi(X)=\sum\limits_{i=1}^{n}\chi(X_{i}).
$$

(4) If $\varphi$ is a locally trivial fibration with fibre $F$, then $\chi(X)=\chi(F)\cdot\chi(Y)$.

(5) $\chi(\mathbb{K}^{n})=1$, $\chi(\mathbb{K}\mathbb{P}^{n})=n+1$ for all $n\geq0$.
\end{prop}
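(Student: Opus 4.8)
The five properties are standard consequences of the formalism of compactly-supported $p$-adic cohomology, and the plan is to deduce them in the stated order, since (3)--(5) rest on the earlier parts. For (1), I would write down the long exact sequence in compactly-supported cohomology attached to the open--closed decomposition $U=X\sm Z\hookrightarrow X\hookleftarrow Z$, namely $\cdots\to H^i_{\rm cs}(U,\Q_p)\to H^i_{\rm cs}(X,\Q_p)\to H^i_{\rm cs}(Z,\Q_p)\to H^{i+1}_{\rm cs}(U,\Q_p)\to\cdots$. Because the alternating sum of $\Q_p$-dimensions over any bounded exact sequence is zero, collecting terms with the sign $(-1)^i$ yields $\chi(X)=\chi(X\sm Z)+\chi(Z)$ at once.

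For (2), I would invoke the K\"unneth isomorphism $H^n_{\rm cs}(X\t Y,\Q_p)\cong\bigoplus_{i+j=n}H^i_{\rm cs}(X,\Q_p)\ot H^j_{\rm cs}(Y,\Q_p)$; passing to dimensions and forming $\sum_n(-1)^n\dim_{\Q_p}$ factors the double sum as a product, giving $\chi(X\t Y)=\chi(X)\chi(Y)$. Property (3) then follows from (1) by induction on $n$: the one nontrivial point is that for a locally closed piece $L$ one has $\chi(L)=\chi(\bar L)-\chi(\bar L\sm L)$, which is (1) applied to the closed subscheme $\bar L\sm L$ of $\bar L$; granting this, additivity over a finite locally closed partition is purely formal.

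For (4), I would use quasi-compactness (finite type) of $Y$ to choose a finite stratification $Y=\coprod_j Y_j$ into locally closed subschemes over each of which the fibration trivializes, so that $\varphi^{-1}(Y_j)\cong Y_j\t F$; then (3) and (2) combine to give $\chi(X)=\sum_j\chi(Y_j\t F)=\chi(F)\sum_j\chi(Y_j)=\chi(F)\chi(Y)$. Finally, for (5): $\K$ is $\KP^1$ with one point removed, so (1) and $\chi(\KP^1)=2$ give $\chi(\K)=1$, and hence $\chi(\K^n)=1$ by (2); the affine cell decomposition $\KP^n=\K^n\sqcup\K^{n-1}\sqcup\cdots\sqcup\K^0$ together with (3) then gives $\chi(\KP^n)=\sum_{k=0}^{n}1=n+1$ (equivalently, one reads these off directly from $H^\bullet_{\rm cs}(\K^n)$ and $H^\bullet_{\rm cs}(\KP^n)$).

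I expect the main obstacle to be (4), and specifically the meaning of ``locally trivial fibration''. If triviality is understood in the Zariski topology the stratification argument above is clean; if it is only \'etale- or analytic-local one must further check that the monodromy does not affect the alternating dimension count, which is where genuine care (beyond the formal additivity and multiplicativity) is needed. The remaining parts are essentially bookkeeping once the exactness, K\"unneth, and cell-decomposition inputs are in hand.
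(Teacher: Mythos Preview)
The paper does not prove this proposition at all: it is stated without proof, preceded only by the remark that ``the following properties of Euler characteristic follow \cite{dxx10} and \cite{joyceJLMS06}.'' Your sketch is the standard argument and is more than the paper itself provides; the long exact sequence for an open--closed pair, the K\"unneth formula, and the affine cell decomposition of $\KP^n$ are exactly the inputs one would cite. One small comment on (5): as written you invoke $\chi(\KP^1)=2$ to obtain $\chi(\K)=1$ and then use $\chi(\K)=1$ to deduce $\chi(\KP^n)=n+1$, which is mildly circular; cleaner is to compute $\chi(\K^n)=1$ directly from $H^{\bullet}_{\rm cs}(\K^n,\Q_p)$ being concentrated in degree $2n$ (or to start from $\chi(\text{point})=1$), exactly as your parenthetical remark indicates.
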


An algebraic $\mathbb{K}$-stack $\mathcal{F}$ is said to be stratified by global quotient stacks if $\mathcal{F}(\mathbb{K})=\amalg_{i=1}^s\mathcal{F}_{i}(\mathbb{K})$ for finitely many locally closed substacks $\mathcal{F}_{i}$ where each $\mathcal{F}_{i}$ is 1-isomorphic to a quotient stack $[X_i/G_i]$, where $X_i$ is an algebraic $\mathbb{K}$-variety and $G_i$ a smooth connected linear algebraic $\mathbb{K}$-group acting on $X_i$. By \cite[Propostion 3.5.9]{kresch99}, if $\mathcal{F}$ is a finite type algebraic $\mathbb{K}$-stack with affine geometric stabilizers, then $\mathcal{F}$ is stratified by global quotient stacks.

Let $\mathcal{F}=\amalg_{i=1}^s\mathcal{F}_{i}(\mathbb{K})$ where each $\mathcal{F}_{i}\cong[X_i/G_i]$ as above. By Theorem \ref{rosenlicht}, there exists an open dense $G_i$-invariant subvariety $X_{i1}$ of $X_i$ for each $i$ such that there exists a morphism of varieties $\phi_{i1}:X_{i1}\rightarrow Y_{i1}$, which induces a bijection between $X_{i1}(\mathbb{K})/G_i$ and $Y_{i1}(\mathbb{K})$. Then $\phi_{i1}$ induces a 1-morphism $\theta_{i1}:\mathcal{G}_{i1}\rightarrow Y_{i1}$, where $\mathcal{G}_{i1}$ is 1-isomorphic to $[X_{i1}/G_i]$. Note that $$\text{dim}(X_{i(j-1)}\setminus X_{ij})<\dim X_{i(j-1)}$$ for $j=1,\ldots,k_i$. Using Theorem \ref{rosenlicht} again, we get a stratification
$$
\mathcal{F}(\mathbb{K})=\amalg_{i=1}^{s}\amalg_{j=1}^{k_i}\mathcal{G}_{ij}(\mathbb{K})
$$
for $s,k_i\in\mathbb{N}^+$, where $\mathcal{G}_{ij}\cong[X_{ij}/G_i]$ such that $\phi_{ij}:X_{ij}\rightarrow Y_{ij}$ is a morphism of $\mathbb{K}$-varieties and $\theta_{ij}:\mathcal{G}_{ij}\rightarrow Y_{ij}$ a 1-morphism induced by $\phi_{ij}$. Let $$Y=\amalg_{i=1}^{s}\amalg_{j=1}^{k_i}Y_{ij}~\text{and}~
\Theta=\amalg_{i=1}^{s}\amalg_{j=1}^{k_i}(\theta_{ij})_*:\mathcal{F}(\mathbb{K})\rightarrow Y(\mathbb{K}).
$$
Then $Y$ is a a separated $\mathbb{K}$-scheme of finite type and $\Theta$ a pseudoisomorphism (see \cite[Proposition 4.4 and Proposition 4.7]{joyceJLMS06}).

\begin{dfn}
Let $\mathcal{F}$ be an algebraic $\mathbb{K}$-stack with affine geometric stabilizers and $\mathcal{C}\subseteq\mathcal{F}(\mathbb{K})$ a constructible set. Then $\mathcal{C}$ is pseudo\"{E}\"{E}isomorphic to $Y(\mathbb{K})$, where $Y$ is a separated $\mathbb{K}$-scheme of finite type by \cite[Proposition 4.7]{joyceJLMS06}. The na\"{\i}ve Euler characteristic of $\mathcal{C}$ is defined by $\chi^{\na}(\mathcal{C})=\chi(Y)$.
\end{dfn}

The following lemma is a generalization of Proposition \ref{prop1} (4).
\begin{lem}\label{lem2.1}
Let $\mathcal{F}$ and $\mathcal{G}$ be algebraic $\mathbb{K}$-stacks with affine geometric stabilizers. If $\mathcal{C}\subseteq\mathcal{F}(\mathbb{K})$, $\mathcal{D}\subseteq\mathcal{G}(\mathbb{K})$ are constructible sets, and $\Phi:\mathcal{C}\rightarrow\mathcal{D}$ is a surjective pseudomorphism such that all fibers have the same na\"{\i}ve Euler characteristic $\chi$, then $\chi^{\na}(\mathcal{C})=\chi\cdot\chi^{\na}(\mathcal{D})$.
\end{lem}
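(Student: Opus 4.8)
The plan is to reduce the statement to a question about ordinary Euler characteristics of finite type $\K$-schemes, where Proposition \ref{prop1} applies, and then to combine a generic triviality argument with Noetherian induction.

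First I would transfer everything to schemes. By the construction preceding the definition of $\chi^{\na}$, there are pseudoisomorphisms $\Theta_1\colon\mathcal{C}\rightarrow Y_1(\K)$ and $\Theta_2\colon\mathcal{D}\rightarrow Y_2(\K)$ with $Y_1,Y_2$ separated finite type $\K$-schemes, so that $\chi^{\na}(\mathcal{C})=\chi(Y_1)$ and $\chi^{\na}(\mathcal{D})=\chi(Y_2)$. Setting $\Psi=(\Theta_2)_*\circ\Phi\circ(\Theta_1)_*^{-1}\colon Y_1(\K)\rightarrow Y_2(\K)$ gives a surjective pseudomorphism; since $\Theta_1$ restricts to a pseudoisomorphism from each fibre $\Phi^{-1}(d)$ onto the corresponding fibre of $\Psi$, and $\chi^{\na}$ is invariant under pseudoisomorphism, every fibre of $\Psi$ has na\"{\i}ve Euler characteristic $\chi$. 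Thus it suffices to prove $\chi(Y_1)=\chi\cdot\chi(Y_2)$ for the surjective pseudomorphism $\Psi$ whose fibres all have Euler characteristic $\chi$.

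Next I would replace the pseudomorphism by an honest morphism using its graph. Because $Y_1$ is of finite type and $\Psi$ is a pseudomorphism, the graph $\Gamma_\Psi\subseteq Y_1(\K)\times Y_2(\K)$ is a constructible set. The first projection $\pi_1\colon\Gamma_\Psi\rightarrow Y_1(\K)$ is a bijection whose inverse $y_1\mapsto(y_1,\Psi(y_1))$ again has constructible graph, so it is a pseudoisomorphism and $\chi^{\na}(\Gamma_\Psi)=\chi(Y_1)$. Realising $\Gamma_\Psi$ as a separated finite type $\K$-scheme $X$, the second projection $f=\pi_2|_{\Gamma_\Psi}\colon X\rightarrow Y_2$ is a genuine scheme morphism, and $\pi_1$ restricts to a pseudoisomorphism from $f^{-1}(y)$ onto $\Psi^{-1}(y)$, so $\chi(f^{-1}(y))=\chi$ for all $y$. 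The problem is thus reduced to showing $\chi(X)=\chi\cdot\chi(Y_2)$ for a surjective morphism $f\colon X\rightarrow Y_2$ of separated finite type $\K$-schemes all of whose fibres have Euler characteristic $\chi$.

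For this last step I would argue by Noetherian induction on $Y_2$. Working one irreducible component at a time, generic triviality (over $\C$, Thom's first isotopy lemma applied to a Whitney stratification of $f$; the general case reduces to this one) yields a dense open $U\subseteq Y_2$ over which $f$ restricts to a locally trivial fibration with some fibre $F$. Since $F$ is a fibre of $f$ we have $\chi(F)=\chi$, and Proposition \ref{prop1}(4) gives $\chi(f^{-1}(U))=\chi(F)\cdot\chi(U)=\chi\cdot\chi(U)$. The complement $Y_2\setminus U$ is a proper closed subscheme whose preimage fibres still all have Euler characteristic $\chi$, so the inductive hypothesis gives $\chi(f^{-1}(Y_2\setminus U))=\chi\cdot\chi(Y_2\setminus U)$. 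Adding these via Proposition \ref{prop1}(1),(3) yields $\chi(X)=\chi\cdot(\chi(U)+\chi(Y_2\setminus U))=\chi\cdot\chi(Y_2)$, which unwinds to $\chi^{\na}(\mathcal{C})=\chi\cdot\chi^{\na}(\mathcal{D})$. The main obstacle is the generic triviality input in this final step: producing a dense open of the base over which the morphism is a locally trivial fibration in a sense strong enough to feed Proposition \ref{prop1}(4). Everything else is bookkeeping with pseudoisomorphisms and the additivity and multiplicativity of $\chi$ recorded in Proposition \ref{prop1}.
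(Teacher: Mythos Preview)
Your proof is correct and follows essentially the same route as the paper: reduce to separated finite type $\K$-schemes via pseudoisomorphisms, then pass to the graph $\Gamma_\Psi$ so that the first projection is a pseudoisomorphism and the second is an honest scheme morphism with fibres of Euler characteristic $\chi$. The only difference is that where the paper simply asserts the resulting equality $\chi^{\na}(\Gamma_\phi)=\chi\cdot\chi(Y)$ without further comment, you supply an actual argument for it via generic triviality and Noetherian induction---so your version is more complete, not different in strategy.
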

\begin{proof}
Because $\mathcal{C}$, $\mathcal{D}$ are constructible sets, there exist separated finite type $\mathbb{K}$-schemes $X$, $Y$ such that $\mathcal{C}$, $\mathcal{D}$ are pseudoisomorphic to $X(\mathbb{K})$, $Y(\mathbb{K})$ respectively. Therefore $\chi^{\na}(\mathcal{C})=\chi(X)$, $\chi^{\na}(\mathcal{D})=\chi(Y)$. Then $\Phi$ induces a surjective pseudomorphism between $X(\mathbb{K})$ and $Y(\mathbb{K})$, say $\phi:X(\mathbb{K})\rightarrow Y(\mathbb{K})$. There exist two projective morphisms
$\pi_{1}:\Gamma_{\phi}\rightarrow X(\mathbb{K})$ and $\pi_{2}:\Gamma_{\phi}\rightarrow Y(\mathbb{K})$. Note that $\pi_{1}$ is also a pseudoisomorphism, that is $\chi^{\na}(\Gamma_{\phi})=\chi(X)$, and all fibres of $\pi_{2}$ have the same na\"{\i}ve Euler characteristic $\chi$. Then $\chi^{\na}(\Gamma_{\phi})=\chi\cdot\chi(Y)$. Hence $\chi(X)=\chi\cdot\chi(Y)$. We finish the proof.
\end{proof}

\begin{dfn}
A function $f:\mathcal{F}(\mathbb{K})\rightarrow\mathbb{Q}$ is called a constructible function on $\mathcal{F}(\mathbb{K})$ if the codomain of $f$ is a finite set and $f^{-1}(a)$ is a constructible subset of $\mathcal{F}(\mathbb{K})$ for each $a\in f(\mathcal{F}(\mathbb{K}))\setminus\{0\}$. Let $\CF(\mathcal{F})$ denote the $\mathbb{Q}$-vector space of all $\mathbb{Q}$-valued constructible functions on $\mathcal{F}(\mathbb{K})$.

Let $S\subseteq\mathcal{F}(\mathbb{K})$ be a locally constructible set. The integral of $f$ on $S$ is
$$
\int_{x\in S}f(x)=\sum\limits_{a\in f(S)\setminus\{0\}}a\chi^{\na}(f^{-1}(a)\cap S)
$$
for each $f\in\CF(\mathcal{F})$.
\end{dfn}

We recall the pushforwards and pullbacks of constructible functions due to Joyce \cite{joyceJLMS06}.
\begin{dfn}
Let $\mathcal{F}$ and $\mathcal{G}$ be algebraic $\mathbb{K}$-stacks with affine geometric stabilizers and $\phi: \mathcal{F}\rightarrow\mathcal{G}$ a 1-morphism. For each $f\in\CF(\mathcal{F})$, the na\"{\i}ve pushforward $\phi^{\na}_!(f):\mathcal{F}(\mathbb{K})\rightarrow\mathbb{Q}$ of $f$ is
$$
\phi^{\na}_!(f)(t)=\sum\limits_{a\in f(\phi^{-1}_{*}(t))\setminus\{0\}}a\chi^{\na}(f^{-1}(a)\cap\phi^{-1}_{*}(t))
$$
for each $t\in\mathcal{G}(\mathbb{K})$. Then $\phi^{\na}_!(f)$ is a constructible function for each $f\in\CF(\mathcal{F})$ by \cite[Theorem~4.9]{joyceJLMS06}.

Similarly, if $\Phi:\mathcal{F}(\mathbb{K})\rightarrow\mathcal{G}(\mathbb{K})$ is a pseudomorphism, the na\"{\i}ve pushforward $\Phi^{\na}_!(f):\mathcal{F}(\mathbb{K})\rightarrow\mathbb{Q}$ of $f\in\CF(\mathcal{F})$ is defined by
$$
\Phi^{\na}_!(f)(t)=\sum\limits_{a\in f(\Phi^{-1}(t))\setminus\{0\}}a\chi^{\na}(f^{-1}(a)\cap\Phi^{-1}(t))
$$
for $t\in\mathcal{G}(\mathbb{K})$. Recall that $\Phi^{\na}_!(f)\in\CF(\mathcal{G})$ by \cite[Theorem~4.9]{joyceJLMS06}.

If $\phi: \mathcal{F}\rightarrow\mathcal{G}$ is a 1-morphism such that $\chi(\text{Ker}(\phi_x))=1$ for all $x\in \mathcal{F}(\mathbb{K})$, we can define a function $m_{\phi}:\mathcal{F}(\mathbb{K})\rightarrow\mathbb{Q}$ by
$$
m_{\phi}(x)=\chi\Big(\Iso_{\mathbb{K}}\big(\phi_*(x)\big)/\phi_x\big(\Iso_{\mathbb{K}}(x)\big)\Big)
$$
for each $x\in\mathcal{F}(\mathbb{K})$. For each $f\in\CF(\mathcal{F})$, the pushforward $\phi_!(f):\mathcal{G}(\mathbb{K})\rightarrow\mathbb{Q}$ of $f$ is defined by
$$
\phi_!(f)=\phi^{na}_!(f\cdot m_{\phi}),
$$
where $(f\cdot m_{\phi})(x)=f(x)m_{\phi}(x)$ for $x\in\mathcal{F}(\mathbb{K})$. Note that $\phi_!(f)\in\CF(\mathcal{G})$ (see \cite{joyceJLMS06}).

If $\phi$ is a 1-morphism of finite type, then $\phi_{*}^{-1}(\mathcal{D})\subset\mathcal{F}(\mathbb{K})$ is a constructible set for each constructible subset $\mathcal{D}$ of $\mathcal{G}(\mathbb{K})$. Then $g\circ\phi_{*}\in\CF(\mathcal{F})$ for $g\in\CF(\mathcal{G})$. Recall that the pullback $\phi^{*}:\CF(\mathcal{G})\rightarrow\CF(\mathcal{F})$ of $\phi$ is defined by $\phi^{*}(g)=g\circ\phi_{*}$ and it is linear.

\end{dfn}

\subsection{Stacks of objects and conflations in $\A$}

From now on, let $(\A,\mathcal{S})$ be a Krull-Schmidt exact $\mathbb{K}$-category with idempotent complete (see \ref{def a1} and \ref{def a3}). For simplicity, we write $\A$ instead of $(\A,\mathcal{S})$.

The isomorphism classes of $X\in\Obj(\A)$ and conflations $X\xrightarrow{i}Y\xrightarrow{d}Z$ in $\A$ are denoted by $[X]$ and $[X\xrightarrow{i}Y\xrightarrow{d}Z]$ (or $[(X,Y,Z,i,d)]$), respectively. Two conflations $X\xrightarrow{i}Y\xrightarrow{d}Z$ and $A\xrightarrow{f}B\xrightarrow{g}C$ are isomorphic if there exist isomorphisms $a:X\rightarrow A$, $b:Y\rightarrow B$ and $c:Z\rightarrow C$ in $\A$ such that the following diagram is communicative
\begin{equation}
\xymatrix{
  X \ar[d]_{a} \ar[r]^{i} & Y \ar[d]_{b} \ar[r]^{d} & Z \ar[d]^{c} \\
  A \ar[r]^{f} & B \ar[r]^{g} & C   }
\end{equation}
The morphism $(a,b,c)$ is called an isomorphism of conflations in $\A$.

\begin{ass}
Assume that $\text{dim}_{\mathbb{K}}\Hom_{\A}(X,Y)$ and $\text{dim}_{\mathbb{K}}\Ext^1_{\A}(X,Y)$ are finite for all $X,Y\in\text{Obj}(\A)$. Let $K(\A)$ denote the quotient group of the Grothendieck group $K_0(\A)$ such that $\tilde{[X]}=0$ in $K(\A)$ implies that $X$ is a zero object in $\A$, where $\tilde{[X]}$ denotes the image of $X$ in $K(\A)$.
\end{ass}

The following $2$-categories are defined in \cite{joyceA1}.

Let $\Sch_{\mathbb{K}}$ be a $2$-category of $\mathbb{K}$-schemes such that objects are $\mathbb{K}$-schemes, $1$-morphisms morphisms of schemes and $2$-morphisms only the natural transformations $\id_{f}$ for all $1$-morphisms $f$. Let $\text{(exactcat)}$ denote the $2$-category of all exact categories with $1$-morphisms exact functors of exact categories and $2$-morphisms natural transformations between the exact functors. If all morphisms of a category are isomorphisms, then the category is called a groupoid. Let (groupoids) be the $2$-category with objects groupoids, $1$-morphisms functors of groupoids and $2$-morphisms natural transformations (also see \cite[Definition 2.8]{joyceA1}).

In \cite[Section 7.1]{joyceA1}, Joyce defined a stack $\mathcal{F}_{\A}:\Sch_{\mathbb{K}}:\rightarrow\text{(exactcat)}$ associated to the exact category $\A$ (the original definition is for abelian category, it can be extended to exact categories directly), where $\mathcal{F}_{\A}$ is a contravariant 2-functor and satisfies the condition $\mathcal{F}_{\A}(\Spec(\mathbb{K}))=\A$. Applying $\mathcal{F}_{\A}$, he defined two moduli stacks
$$
\fObj_\A,\fExact_\A:\Sch_{\mathbb{K}}\rightarrow\text{(groupoids)}
$$
which are contravatiant $2$-functors (\cite[Definition~7.2]{joyceA1}). The $2$-functor
$$\fObj_\A=F\circ\mathcal{F}_{\A},$$
where $F:\text{(exactcat)}\rightarrow\text{(groupoids)}$ is a forgetful $2$-functor as follows. For an exact category $G$, $F(G)$ is a groupoid such that $\text{Obj}(F(G))=\text{Obj}(G)$ and morphisms are isomorphisms in $G$. For $U\in\Sch_{\mathbb{K}}$, a category $\fExact_\A(U)$ is a groupoid whose objects are conflations in $\mathcal{F}_\A(U)$ and morphisms isomorphisms of conflations in $\mathcal{F}_\A(U)$.

Let $\eta:U\rightarrow V$ and $\theta:V\rightarrow W$ be morphisms of schemes in $\Sch_{\mathbb{K}}$. Obviously, the functors $\fObj_\A(\eta):\fObj_\A(V) \rightarrow\fObj_\A(U)$ and $\fExact_\A(\eta):\fExact_\A(V)\rightarrow\fExact_\A(U)$ are induced by $\mathcal{F}_\A(\eta):\mathcal{F}_\A(V)\rightarrow\mathcal{F}_\A(U)$. The natural transformations $\epsilon_{\theta,\eta}:\fObj_{\A}(\eta)\circ\fObj_{\A}(\theta)\rightarrow \fObj_{\A}(\theta\circ\eta)$ and $\epsilon_{\theta,\eta}:\fExact_{\A}(\eta)\circ\fExact_{\A}(\theta)\rightarrow \fExact_{\A}(\theta\circ\eta)$ are also induced by $\epsilon_{\theta,\eta}:\mathcal{F}_{\A}(\eta)\circ\mathcal{F}_{\A}(\theta)\rightarrow \mathcal{F}_{\A}(\theta\circ\eta)$.

Let
$$
K^{\prime}(\A)=\{\tilde{[X]}\in K(\A)~|~X\in\text{Obj}(\A)\}\subset K(\A).
$$

For each $\alpha\in K^{\prime}(\A)$, Joyce defined $\fObj_{\A}^{\alpha}:\Sch_{\mathbb{K}}\rightarrow\text{(groupoids)}$ which is a substack of $\fObj_\A$ in \cite[Definition 7.4]{joyceA1}. For each $U\in\Sch_{\mathbb{K}}$, $\fObj_{\A}^{\alpha}(U)$ is a full subcategory of $\fObj_{\A}(U)$. For each object $X$ in $\fObj_{\A}^{\alpha}(U)$, the image of $\fObj_{\A}(f)(X)$ in $K(\A)$ is $\alpha$ for each morphism $f:\Spec(\mathbb{K})\rightarrow U$.

Let $\eta:U\rightarrow V$ and $\theta:V\rightarrow W$ be morphisms in $\Sch_{\mathbb{K}}$. The functor $\fObj_{\A}^{\alpha}(\eta): \fObj_{\A}^{\alpha}(V)\rightarrow\fObj_{\A}^{\alpha}(U)$ is defined by restriction from $\fObj_{\A}(\eta): \fObj_{\A}(V)\rightarrow\fObj_{\A}(U)$. The natural transformation $\epsilon_{\theta,\eta}:\fObj_{\A}^{\alpha}(\eta)\circ\fObj_{\A}^{\alpha}(\theta)\rightarrow \fObj_{\A}^{\alpha}(\theta\circ\eta)$ is restricted from $\epsilon_{\theta,\eta}:\fObj_{\A}(\eta)\circ\fObj_{\A}(\theta)\rightarrow \fObj_{\A}(\theta\circ\eta)$.

For $\alpha,\beta,\gamma\in K^{\prime}(\A)$ and $\beta=\alpha+\gamma$, $\fExact_{\A}^{\alpha,\beta,\gamma}:\Sch_{\mathbb{K}}\rightarrow \text{(groupoids)}$ is defined as follows. For $U\in\Sch_{\mathbb{K}}$, $\fExact_{\A}^{\alpha,\beta,\gamma}(U)$ is a full subcategory of $\fExact_{\A}(U)$. The objects of $\fExact_{\A}^{\alpha,\beta,\gamma}(U)$ are conflations
$$
X\xrightarrow{i}Y\xrightarrow{d}Z \in\text{Obj}(\fExact_{\A}(U)),
$$
where $X\in\text{Obj}(\fObj_{\A}^{\alpha}(U))$, $Y\in\text{Obj}(\fObj_{\A}^{\beta}(U))$ and $Z\in\text{Obj}(\fObj_{\A}^{\gamma}(U))$. Similarly, the morphism $\fExact_{\A}^{\alpha,\beta,\gamma}(\eta)$ and natural transformation $\epsilon_{\theta,\eta}$ are defined by restriction.

The following theorem is taking from \cite[Theorem 7.5]{joyceA1}.
\begin{thm}
The $2$-functors $\fObj_\A$, $\fExact_\A$ are $\mathbb{K}$-stacks, and $\fObj_{\A}^{\alpha}$, $\fExact_{\A}^{\alpha,\beta,\gamma}$ are open and closed $\mathbb{K}$-substacks of them respectively. There are disjoint unions
$$\fObj_\A=\amalg_{\alpha\in K^{\prime}(\A)}\fObj_\A^{\alpha},\fExact_{\A}=\amalg_{\alpha,\beta,\gamma\in K^{\prime}(\A)\atop \beta=\alpha+\gamma}\fExact_{\A}^{\alpha,\beta,\gamma}.$$
\end{thm}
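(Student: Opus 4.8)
The plan is to adapt Joyce's argument for abelian categories \cite{joyceA1} to the exact category setting, checking at each stage that nothing essential is lost when passing from abelian to exact $\A$. I would organize the proof into three parts: verifying the stack axioms for $\fObj_\A$ and $\fExact_\A$; establishing the open-and-closed property of the numerical substacks $\fObj_\A^{\al}$ and $\fExact_\A^{\al,\be,\ga}$; and deducing the disjoint union decompositions as a formal consequence.

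First I would verify that $\fObj_\A$ and $\fExact_\A$ satisfy descent. Since $\fObj_\A=F\circ\mathcal{F}_\A$ and $\fExact_\A(U)$ is the groupoid of conflations in $\mathcal{F}_\A(U)$, both are built from the stack $\mathcal{F}_\A$ by applying functors that only remember objects, conflations, and isomorphisms. Thus it suffices to check that (i) isomorphisms of objects (resp.\ of conflations) glue uniquely along a cover, and (ii) descent data for objects (resp.\ conflations) is effective. Part (i) is the sheaf property for $\Iso$, inherited directly from $\mathcal{F}_\A$. For part (ii), the only point beyond Joyce's abelian argument is that the class $\mathcal{S}$ of conflations is stable under descent: a pair of composable morphisms forming a conflation locally must form a conflation globally. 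I would argue this holds because the exact structure on $\mathcal{F}_\A(U)$ is defined fibrewise and is compatible with the pullback functors $\mathcal{F}_\A(\eta)$, so being a conflation is a local condition in the topology on $\Sch_\K$.

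Next I would show that each $\fObj_\A^{\al}$ is open and closed in $\fObj_\A$, and likewise for $\fExact_\A^{\al,\be,\ga}$. The key is that for any family $X\in\Obj(\fObj_\A(U))$ over a connected scheme $U$, the assignment $u\mapsto\tilde{[X_u]}\in K(\A)$ is constant. Here I would invoke the finiteness Assumption together with deformation-invariance of Euler characteristics: the relevant integer-valued numerical invariants of $X_u$ (Euler forms against fixed test objects, built from $\dim_\K\Hom_\A$ and $\dim_\K\Ext^1_\A$) are locally constant in $u$ and determine $\tilde{[X_u]}$ in $K(\A)$, so $u\mapsto\tilde{[X_u]}$ is locally constant. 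Consequently the locus where the class equals $\al$ is simultaneously open (its complement is the union of the other classes, each open by the same argument) and closed, so $\fObj_\A^{\al}$ is an open and closed substack. The argument for $\fExact_\A^{\al,\be,\ga}$ is identical, applied to the three terms of the conflation together with the identity $\be=\al+\ga$, which is forced by additivity of the class on conflations.

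Finally, the disjoint union decompositions follow formally: every $\K$-point of $\fObj_\A$ has a well-defined class in $K'(\A)$, so the substacks $\fObj_\A^{\al}$ cover $\fObj_\A$ and are pairwise disjoint on $\K$-points; being open and closed, they exhibit $\fObj_\A$ as the disjoint union $\amalg_{\al}\fObj_\A^{\al}$, and similarly for $\fExact_\A$. I expect the main obstacle to be the effectivity of descent for conflations in part (ii): one must confirm that the exact structure $\mathcal{S}$, which is extra data compared with the abelian case, glues correctly along covers. This is precisely the step where the passage from abelian to exact categories demands genuine verification rather than a direct appeal to Joyce, and I would isolate it as the technical heart of the proof.
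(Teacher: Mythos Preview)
The paper does not prove this theorem at all: it is stated as being ``taking from \cite[Theorem 7.5]{joyceA1}'' and is simply quoted without argument. So there is no proof in the paper for your proposal to be compared against. Your outline is a genuine attempt to supply what the paper omits, and in that sense goes well beyond the paper's treatment.

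That said, your sketch has a soft spot in the second step. You argue that the class $\tilde{[X_u]}\in K(\A)$ is locally constant because it is determined by Euler forms $\dim_\K\Hom-\dim_\K\Ext^1$ against test objects, and those are deformation-invariant. But the paper's $K(\A)$ is only specified as \emph{some} quotient of $K_0(\A)$ with the property that $\tilde{[X]}=0$ forces $X\cong 0$; nothing in the standing assumptions guarantees that classes in $K(\A)$ are detected by such numerical pairings. In Joyce's original framework this step is handled by explicit hypotheses on $\mathcal{F}_\A$ (his Assumptions 7.1 and 8.1), not by a general Euler-form argument, and the present paper sidesteps the issue entirely by citing the result. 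If you want your outline to stand on its own, you would need either to import Joyce's axioms verbatim or to add a hypothesis ensuring that $K(\A)$-classes are locally constant in families.
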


Assume that $\fObj_\A$ and $\fExact_\A$ are locally of finite type algebraic $\mathbb{K}$-stacks with affine algebraic stabilizers. Recall that $\fObj_\A(\mathbb{K})$ and $\fExact_\A(\mathbb{K})$ are the collection of isomorphism classes of objects in $\A$ and the collection of isomorphism classes of conflations in $\A$, respectively. For each $\alpha\in K^{\prime}(\A)$, $\fObj_\A^{\alpha}(\mathbb{K})$ is the collection of isomorphism classes of $X\in\text{Obj}(\A)$ such that $\tilde{[X]}=\alpha$ (see \cite[Section 3.2]{joyceA2}).

\begin{ex}\label{ex1}
Let $Q=(Q_0,Q_1,s,t)$ be a finite connected quiver, where $Q_0=\{1,\ldots,n\}$ is the set of vertices, $Q_1$ is the set of arrows and $s:Q_1\rightarrow Q_0$ (resp. $t:Q_1\rightarrow Q_0$) is a map such that $s(\rho)$ (resp. $t(\rho)$) is the source (resp. target) of $\rho$ for $\rho\in Q_1$. Let $A=\mathbb{C}Q$ be the path algebra of $Q$ and mod-$A$ denote the category of all finite dimensional right $A$-modules.

Let $\underline{d}=(d_j)_{j\in Q_0}$ for all $d_j\in\mathbb{N}$. There is an affine variety
$$
\text{Rep}(Q,\underline{d})=\bigoplus\limits_{\rho\in Q_1}\Hom(\mathbb{C}^{d_{s(\rho)}},\mathbb{C}^{d_{t(\rho)}}).
$$
For each $x=(x_\rho)_{\rho\in Q_1}\in\text{Rep}(Q,\underline{d})$, there is a $\mathbb{C}$-linear representation $M(x)=(\mathbb{C}^{d_j},x_\rho)_{j\in Q_0,\rho\in Q_1}$ of $Q$. Let $\text{rep}(Q)$ denote the category of finite dimensional $\mathbb{C}$-linear representations of $Q$. Recall that $\text{rep}(Q)\cong \text{mod-}\A$. We identify $\text{rep}(Q)$ with mod-$\A$. The linear algebraic group
$$
\text{GL}(\underline{d})=\prod\limits_{j\in Q_0}\text{GL}(d_j,\mathbb{C})
$$
acts on $\text{Rep}(Q,\underline{d})$ by $g.x=(g_{t(\rho)}x_\rho g^{-1}_{s(\rho)})_{\rho\in Q_1}$ for $g=(g_j)_{j\in Q_0}\in\text{GL}(\underline{d})$.

A complex $M^\bullet=(M^{(i)},\partial^{i})$, where $M^{(i)}\in\text{Obj}(\text{mod-}\A)$ and $\partial^{i+1}\partial^{i}=0$, is bounded if there exist some positive integers $n_0$ and $n_1$ such that $M^{(i)}=0$ for $i\leq -n_0$ or $i\geq n_1$. Let $\underline{\dim}M^{(i)}=\underline{d}^{(i)}$ be the dimension vector of $M^{(i)}$ for each $i\in\mathbb{Z}$. The vector sequence $(\underline{d}^{(i)})_{i\in\mathbb{Z}}$ of $M^{\bullet}$ is denoted by $\underline{\textbf{ds}}(M^{\bullet})$.

Let $\mathcal{C}(Q,\mathbf{\underline{d}})$ denote the affine variety consisting of all complexes $M^\bullet$ with $\underline{\textbf{ds}}(M^{\bullet})= \mathbf{\underline{d}}$. The group $G(\mathbf{\underline{d}})= \prod\limits_{i\in\mathbb{Z}}\text{GL}(\underline{d}^{(i)})$ is a linear algebraic group acting on $\mathcal{C}^{b} (Q,\mathbf{\underline{d}})$. The action is induced by the actions of $\text{GL}(\underline{d}^{(i)})$ on $\text{Rep}(Q,\underline{d}^{(i)})$ for all $i\in\mathbb{Z}$, that is
$$
(g^{(i)})_i.(x^{(i)},\partial^{i})_i=(g^{(i)}.x^{(i)},g^{(i+1)}\partial^{i}(g^{(i)})^{-1})_i.
$$

Let $\{P_1,\ldots,P_n\}$ be a set of representatives for all isomorphism classes of finite dimensional indecomposable projective $A$-modules. A complex $P^\bullet=$
$$
\ldots\rightarrow P^{(i-1)}\xrightarrow{\partial^{i-1}}P^{(i)}\xrightarrow{\partial^{i}}P^{(i+1)}\rightarrow\ldots
$$
is projective if $P^{(i)}\cong\bigoplus\limits_{j=1}^n m^{(i)}_{j}P_{j}$ for $m^{(i)}_{j}\in\mathbb{N}$ and $i\in\mathbb{Z}$. Let
$$
\underline{e}(P^{(i)})=\underline{m}^{(i)}=(m_{1}^{(i)}, \ldots,m_{n}^{(i)})
$$
be a vector corresponding to $P^{(i)}$. By the Krull-Schmidt Theorem, $\underline{e}(P^{(i)})$ is unique. The dimension vector of $P^\bullet$ can be defined by
$$
\mathbf{\underline{dim}}(P^\bullet)=(\ldots,\underline{m}^{(i-1)},\underline{m}^{(i)}, \underline{m}^{(i+1)},\ldots).
$$
A dimension vector $\mathbf{\underline{dim}}(P^\bullet)$ is bounded if $P^\bullet$ is bounded.

let $\mathbf{\underline{m}}=(\underline{m}^{(i)})_{i\in\mathbb{Z}}$ be a bounded dimension vector and $\mathbf{\underline{d}}(\mathbf{\underline{m}})=(\underline{d}^{(i)})_{i\in\mathbb{Z}}$ be the vector sequence of a complex whose dimension vector is $\mathbf{\underline{m}}$. Let $\mathcal{P}^{b}(Q,\mathbf{\underline{m}})$ be the set of all bounded project complexes $P^\bullet$ with $\mathbf{\underline{dim}}(P^\bullet) =\mathbf{\underline{m}}$ and $\underline{\textbf{ds}}(P^{\bullet})= \mathbf{\underline{d}}(\mathbf{\underline{m}})$. Note that $\mathcal{P}^{b}(Q,\mathbf{\underline{m}})$ is a locally closed subset of $\mathcal{C}^{b}(Q,\mathbf{\underline{d}}(\mathbf{\underline{m}}))$. An action of $G(\mathbf{\underline{d}}(\mathbf{\underline{m}}))$ on the variety $\mathcal{P}^{b}(Q,\mathbf{\underline{m}})$ is induced by the action of $G(\mathbf{\underline{d}}(\mathbf{\underline{m}}))$ on $\mathcal{C}^{b}(Q,\mathbf{\underline{d}}(\mathbf{\underline{m}}))$.

Let $\mathcal{P}^b(Q)$ denote the exact category with objects bounded project complexes and morphisms $\phi:P^\bullet\rightarrow Q^\bullet$ morphisms between bounded projective complexes. The Grothendieck group
$$
K_0(\mathcal{P}^b(Q))\cong\bigoplus\limits_{i\in\mathbb{Z}}\mathbb{Z}^{n}_{(i)},
$$
where $\mathbb{Z}^{n}_{(i)}=\mathbb{Z}^{n}$. Note that $K(\mathcal{P}^b(Q))=K_0(\mathcal{P}^b(Q))$ and
$$
K^{\prime}(\mathcal{P}^b(Q))\cong\bigoplus\limits_{i\in\mathbb{Z}}\mathbb{N}^{n}_{(i)},
$$
where $\mathbb{N}^{n}_{(i)}=\mathbb{N}^{n}$.

Joyce defined $\mathcal{F}_{\text{mod}-\mathbb{K}Q}$ in \cite[Example 10.5]{joyceA1}. Similarly, for each $U\in\Sch_{\mathbb{K}}$, we define $\mathcal{F}_{\mathcal{P}^b(Q)}(U)$ to be the category as follows.

The objects of $\mathcal{F}_{\mathcal{P}^b(Q)}(U)$ are complexes of sheaves $P^{\bullet}=(P^{(i)},\partial^i)_{i\in\mathbb{Z}}$, where $P^{(i)}=(\bigoplus_{j\in Q_0}X^{(i)}_{j},x^i)$ and $\partial^{i+1}\partial^{i}=0$. The data $X^{(i)}_{j}$ are locally free sheaves of finite rank on $U$ and $x^i=(x_{\rho}^i)_{\rho\in Q_1}$, where $x_{\rho}^i:X^{(i)}_{s(\rho)}\rightarrow X^{(i)}_{t(\rho)}$ are morphisms of sheaves, such that $P^{(i)}=(\bigoplus_{j\in Q_0}X^{(i)}_{j},x^i)$ are projective $\mathbb{C}Q$-modules for all $i\in\mathbb{Z}$. The morphisms of $\mathcal{F}_{\mathcal{P}^b(Q)}(U)$ are morphisms of complexes $\phi^{\bullet}:(P^{(i)},\partial^i)\rightarrow (Q^{(i)},d^i)$, where $Q^{(i)}=(\bigoplus_{j\in Q_0}Y^{(i)}_{j},y^i)$ and $\phi^{\bullet}$ is a sequence of morphisms
$$(\phi^{i}:P^{(i)} \rightarrow Q^{(i)})_{i\in\mathbb{Z}}$$
with $\phi^i=(\phi^{i}_{j}:X^{(i)}_{j} \rightarrow Y^{(i)}_{j})_{j\in Q_0}$ such that $\phi^{i+1}\partial^i=d^{i}\phi^i$ and $\phi^{i}_{t(\rho)}x_{\rho}^i=y_{\rho}^i\phi^{i}_{s(\rho)}$ for all $i\in\mathbb{Z}$ and $\rho\in Q_1$. It is easy to see that $\mathcal{F}_{\mathcal{P}^b(Q)}(U)$ is an exact category.

Let $\eta:U\rightarrow V$ be a morphism in $\Sch_{\mathbb{K}}$. A functor
$$
\mathcal{F}_{\mathcal{P}^b(Q)}(\eta):\mathcal{F}_{\mathcal{P}^b(Q)}(V)\rightarrow \mathcal{F}_{\mathcal{P}^b(Q)}(U)
$$
is defined as follows. If $(P^{(i)},\partial^i)_{i\in\mathbb{Z}}\in\text{Obj}(\mathcal{F}_{\mathcal{P}^b(Q)}(V))$,
$$
\mathcal{F}_{\mathcal{P}^b(Q)}(\eta)(P^{(i)},\partial^i)_{i\in\mathbb{Z}}=(\eta^*(P^{(i)}),\eta^*(\partial^i))_{i\in\mathbb{Z}}
$$
for $\eta^*(P^{(i)})=\big(\bigoplus_{j\in Q_0}\eta^{*}(X^{(i)}_{j}),(\eta^{*}(x_{\rho}^i))_{\rho\in Q_1}\big)$, where $\eta^{*}(X^{(i)}_{j})$ are the inverse images of $X^{(i)}_{j}$ by the morphism $\eta$, $\eta^*(\partial^i):\eta^*(P^{(i)}) \rightarrow\eta^*(P^{(i+1)})$ with $\eta^*(\partial^{i+1})\eta^*(\partial^i)=0$ for $i\in\mathbb{Z}$ and
$$
\eta^{*}(x_{\rho}^i):\eta^{*}(X^{(i)}_{s(\rho)})\rightarrow \eta^{*}(X^{(i)}_{t(\rho)})
$$
for $\rho\in Q_1$ are pullbacks of morphisms between inverse images. For a morphism $\phi^{\bullet}:(P^{(i)},\partial^i)\rightarrow (Q^{(i)},d^i)$ in $\mathcal{F}_{\mathcal{P}^b(Q)}(V)$, the morphism
$$
\mathcal{F}_{\mathcal{P}^b(Q)}(\eta)(\phi^{\bullet}):\big(\eta^{*}(P^\bullet), \eta^{*}(\partial^i)\big) \rightarrow \big(\eta^{*}(Q^\bullet),\eta^{*}(d^i)\big)
$$
is a sequence of morphisms
$$
 \Big(\eta^{*}(\phi^{i}):\big(\bigoplus_{j\in Q_0}\eta^{*}(X^{(i)}_{j}),(\eta^{*}(x_{\rho}^i))_{\rho}\big)\rightarrow \big(\bigoplus_{j\in Q_0}\eta^{*}(Y^{(i)}_{j}),(\eta^{*}(y_{\rho}^i))_{\rho}\big)\Big)_{i\in\mathbb{Z}},
$$
with $\eta^{*}(\phi^{i+1})\eta^{*}(\partial^i)=\eta^{*} (d^i)\eta^{*}(\phi^i)$, where $\eta^{*}(d^i)$ are pullbacks of morphisms between inverse images which satisfy $\eta^{*}(d^{i+1})\eta^{*}(d^i)=0$, and
$$
\eta^{*}(Q^\bullet)=\Big(\bigoplus_{j\in Q_0}\eta^{*}(Y^{(i)}_{j}),(\eta^{*}(y_{\rho}^i))_{\rho\in Q_1}\Big)_{i\in\mathbb{Z}}
$$
such that the pullbacks
$$
\eta^*(\phi^{i}_{j}):\eta^{*}(X^{(i)}_{j})\rightarrow\eta^{*}(Y^{(i)}_{j})
$$
satisfy $\eta^*(\phi^{i}_{t(\rho)})\eta^*(x_{\rho}^i)=\eta^*(y_{\rho}^i)\eta^*(\phi^{i}_{s(\rho)})$. Because locally free sheaves are flat, $\mathcal{F}_{\mathcal{P}^b(Q)}(\eta)(\phi^{\bullet})$ is an exact functor.

Let $\eta:U\rightarrow V$ and $\theta:V\rightarrow W$ be morphisms in $\Sch_{\mathbb{K}}$. As in \cite[Example 9.1]{joyceA1}, for each $P^\bullet\in\text{Obj}(\mathcal{F}_{\mathcal{P}^b(Q)}(W))$, there is a canonical isomorphism $\epsilon_{\theta,\eta}(P^\bullet):\mathcal{F}_{\mathcal{P}^b(Q)}(\eta)\circ\mathcal{F}_{\mathcal{P}^b(Q)}(\theta)(P^\bullet)\rightarrow \mathcal{F}_{\mathcal{P}^b(Q)}(\theta\circ\eta)(P^\bullet)$. We get a 2-isomorphism of functors
$$
\epsilon_{\theta,\eta}:\mathcal{F}_{\mathcal{P}^b(Q)}(\eta)\circ\mathcal{F}_{\mathcal{P}^b(Q)}(\theta)\rightarrow \mathcal{F}_{\mathcal{P}^b(Q)}(\theta\circ\eta)
$$
by the canonical isomorphisms. Thus we have the $2$-functor $\mathcal{F}_{\mathcal{P}^b(Q)}$.

The set $\fObj_{\mathcal{P}^{b}(Q)}(\mathbb{C})$ consists of all isomorphism classes of complexes in $\mathcal{P}^{b}(Q)$.
\end{ex}

As in \cite[Definition 7.7]{joyceA1} and \cite[Section 3.2]{joyceA2}, we have the following $1$-morphisms
$$
\mathbf{\pi}_l: \fExact_\A\rightarrow\fObj_\A
$$
which induces a map $(\mathbf{\pi}_l)_*: \fExact_\A(\mathbb{K})\rightarrow\fObj_\A(\mathbb{K})$ defined by $[X\xrightarrow{i}Y\xrightarrow{d}Z]\mapsto[X]$;

$$
\pi_m: \fExact_\A\rightarrow\fObj_\A
$$
such that the induced map $(\pi_m)_*: \fExact_\A(\mathbb{K})\rightarrow\fObj_\A(\mathbb{K})$ maps $[X\xrightarrow{i}Y\xrightarrow{d}Z]$ to $[Y]$;
$$
\pi_r: \fExact_\A\rightarrow\fObj_\A
$$
inducing the map $(\pi_r)_*: \fExact_\A(\mathbb{K})\rightarrow\fObj_\A(\mathbb{K})$ by $[X\xrightarrow{i}Y\xrightarrow{d}Z]\mapsto[Z]$.

The map $\mathbf{\pi}_{l*}\times\mathbf{\pi}_{r*}:\fExact_\A(\mathbb{K})\rightarrow\fObj_\A(\mathbb{K})\times\fObj_\A(\mathbb{K})$ is defined by $(\mathbf{\pi}_{l*}\times\mathbf{\pi}_{r*})([X\xrightarrow{i}Y\xrightarrow{d}Z])=([X],[Z])$. Note that $(\pi_l\times\pi_r)_*=\mathbf{\pi}_{l*}\times\mathbf{\pi}_{r*}$.

\section{Hall Algebras}
\label{ai3}
\subsection{Constructible sets of stratified Krull-Schmidt}\label{sec3.1}
These definitions are related to \cite{dxx10}.
\begin{dfn}
Let $\mathcal{O}_1$ and $\mathcal{O}_2$ be two constructible subsets of $\fObj_\A(\mathbb{K})$, the direct sum of $\mathcal{O}_1$ and $\mathcal{O}_2$ is
$$
\mathcal{O}_1\oplus\mathcal{O}_2=\big\{[X_1\oplus X_2]~|~[X_1]\in \mathcal{O}_1,[X_2]\in \mathcal{O}_2 ~\text{and}~X_1, X_2\in \Obj(\mathcal{A})\big\}.
$$
Let $n\mathcal{O}$ denote the direct sum of $n$ copies of $\mathcal{O}$ for $n\in\mathbb{N}^+$ and $0\mathcal{O}=\{[0]\}$. Similarly, let $nX$ denote the direct sum of $n$ copies of $X\in\text{Obj}(\A)$. A constructible subset $\mathcal{O}$ of $\fObj_\A(\K)$ is called indecomposable if $X\in\Obj(\mathcal{A})$ is indecomposable and $X\ncong0$ for every $[X]\in \mathcal{O}$.

A constructible set $\mathcal{O}$ is called to be of Krull-Schmidt if
$$
\mathcal{O}= n_1\mathcal{O}_1\oplus n_2\mathcal{O}_2\oplus\ldots\oplus n_k\mathcal{O}_k,
$$
where $\mathcal{O}_i$ are indecomposable constructible sets and $n_i\in\mathbb{N}$ for $i=1,\ldots,k$. If a constructible set $\mathcal{Q}=\amalg_{i=1}^{n}\mathcal{Q}_i$, where $\mathcal{Q}_i$ are constructible sets of Krull-Schmidt for $1\leq i\leq n$, namely $\mathcal{Q}$ is a disjoint union of finitely many constructible sets of Krull-Schmidt, then $\mathcal{Q}$ is said to be a constructible set of stratified Krull-Schmidt.
\end{dfn}

Let $\mathcal{O}_1$ and $\mathcal{O}_2$ be two indecomposable constructible sets. If $\mathcal{O}_1\cap\mathcal{O}_2\neq\emptyset$ and $\mathcal{O}_1\neq\mathcal{O}_2$, we have
$$
\mathcal{O}_1\oplus\mathcal{O}_2=2(\mathcal{O}_1 \cap \mathcal{O}_2) \amalg \Big(\big(\mathcal{O}_1 \setminus (\mathcal{O}_1 \cap \mathcal{O}_2)\big)\oplus \big(\mathcal{O}_2 \setminus (\mathcal{O}_1 \cap \mathcal{O}_2)\big)\Big)
$$
$$
\amalg \Big((\mathcal{O}_1 \cap \mathcal{O}_2) \oplus \big(\mathcal{O}_2 \setminus (\mathcal{O}_1 \cap \mathcal{O}_2)\big)\Big) \amalg \big((\mathcal{O}_1 \setminus (\mathcal{O}_1 \cap \mathcal{O}_2)) \oplus (\mathcal{O}_1 \cap \mathcal{O}_2)\big).
$$
If $\mathcal{Q}=m_{1}\mathcal{O}_{1}\oplus\ldots\oplus m_{l}\mathcal{O}_{l}$ is a constructible set of Krull-Schmidt, we can write $\mathcal{Q}=\amalg_{i=1}^{n}\mathcal{Q}_i$ as a constructible set of stratified Krull-Schmidt, where
$$
\mathcal{Q}_{i}=n_{i1}\mathcal{O}_{i1}\oplus n_{i2}\mathcal{O}_{i2}\oplus\ldots\oplus n_{ik_{i}}\mathcal{O}_{ik_{i}}
$$
for indecomposable constructible sets $\mathcal{O}_{ij}$ which are disjoint each other. Hence we can assume that $\mathcal{O}_{1}, \ldots,\mathcal{O}_{l}$ are disjoint each other.

Let $\CF^{\text{KS}}(\fObj_\A)$ be the subspace of $\CF(\fObj_\A)$ which is spanned by characteristic functions $1_{\mathcal{O}}$ for constructible sets of stratified Krull-Schmidt $\mathcal{O}$, where each $1_{\mathcal{O}}$ satisfies that $1_{\mathcal{O}}([X])=1$ for $[X]\in\mathcal{O}$, and $1_{\mathcal{O}}([X])=0$ otherwise.

\begin{ex}
Let $\mathbb{P}^{1}$ be the projective line over $\mathbb{K}$ and $\text{coh}(\mathbb{P}^{1})$ denote the category of coherent sheaves on $\mathbb{P}^1$.

Let $O(n)$ denote an indecomposable locally free coherent sheaf whose rank and degree are equal to $1$ and $n$ respectively. Let $S_{x}^{[r]}$ be an indecomposable torsion sheaf such that $\rk(S_{x}^{[r]})=0$, $\text{deg}(S_{x}^{[r]})=r$ and the support of $S_{x}^{[r]}$ is $\{x\}$ for $x\in\mathbb{P}^1$. The Grothendieck group $K_0(\text{coh}(\mathbb{P}^{1}))\cong\mathbb{Z}^{2}$. The data $K(\text{coh}(\mathbb{P}^{1}))$ and $\mathcal{F}_{\text{coh}(\mathbb{P}^{1})}$ are defined in \cite[Example 9.1]{joyceA1}. The set of isomorphism classes of indecomposable objects in $\text{coh}(\mathbb{P}^{1})$ is
$$
\{[S_{x}^{[d]}]~|~x\in\mathbb{P}^1,d\in\mathbb{N}\}\cup\{[O(n)]~|~n\in\mathbb{Z}\}.
$$

Recall that a non-trivial subset $U\subset\mathbb{P}^1$ is closed (resp. open) if $U$ is a finite (resp. cofinite) set. Let $\mathcal{O}_d$ be a finite or cofinite subset of $\{[S_{x}^{[d]}]~|~x\in\mathbb{P}\}$ for each $d\in\mathbb{Z}^{+}$ and $\mathcal{O}_{0}$ a finite subset of $\{[O(n)]~|~n\in\mathbb{Z}\}$. Then $\mathcal{O}_d$ and $\mathcal{O}_{0}$ are indecomposable constructible subsets of $\fObj_{\text{coh}(\mathbb{P}^{1})}(\mathbb{K})$. Note that every indecomposable constructible subset of $\fObj_{\text{coh}(\mathbb{P}^{1})}(\mathbb{K})$ is of the form
$$
\mathcal{O}_{0}\amalg\mathcal{O}_{i_1}\amalg\ldots\amalg\mathcal{O}_{i_n}
$$
for $1\leq i_1<\ldots<i_n$. Then the finite direct sum $\oplus(\mathcal{O}_{0}\amalg\mathcal{O}_{i_1}\amalg\ldots\amalg \mathcal{O}_{i_n})$ is a constructible set of Krull-Schmidt. Every constructible set of Krull-Schmidt in $\fObj_{\text{coh}(\mathbb{P}^{1})}(\mathbb{K})$ is of the form. A constructible set of stratified Krull-Schmidt is a disjoint union of finitely many constructible sets of Krull-Schmidt.
\end{ex}

\begin{ex}
In Example \ref{ex1}, $\fObj_{\mathcal{P}^{b}(Q)}^{\mathbf{\underline{m}}}(\mathbb{C})$ is the set of all isomorphism classes of project complexes in $\mathcal{P}^{b}(Q,\mathbf{\underline{m}})$. Note that
$$
\fObj_{\mathcal{P}^{b}(Q)}(\mathbb{C})=\amalg_{\mathbf{\underline{m}}\in K^{\prime}(\mathcal{P}^b(Q))} \fObj_{\mathcal{P}^{b}(Q)}^{\mathbf{\underline{m}}}(\mathbb{C}).
$$

There is a canonical map
$$
p_{\mathbf{\underline{m}}}:\mathcal{P}^{b}(Q,\mathbf{\underline{m}})\rightarrow \fObj_{\mathcal{P}^{b}(Q)}^{\mathbf{\underline{m}}}(\mathbb{C})
$$
which maps $P^\bullet$ to $[P^\bullet]$. A subset $U\subseteq\fObj_{\mathcal{P}^{b}(Q)}^{\mathbf{\underline{m}}}(\mathbb{C})$ is closed (resp. open) if $p_{\mathbf{\underline{m}}}^{-1}(U)$ is closed (resp. open) in $\mathcal{P}^{b}(Q,\mathbf{\underline{m}})$. A subset $V_{\mathbf{\underline{m}}} \subseteq \fObj_{\mathcal{P}^{b}(Q)}^{\mathbf{\underline{m}}}(\mathbb{C})$ is locally closed if it is an intersection of a closed subset and an open subset of $\fObj_{\mathcal{P}^{b}(Q)}^{\mathbf{\underline{m}}}(\mathbb{C})$. A subset $\mathcal{O}\subseteq \fObj_{\mathcal{P}^{b}(Q)}(\mathbb{C})$ is constructible if it is a finite disjoint union of locally closed sets $V_{\mathbf{\underline{m}}}$. Every indecomposable constructible set $\mathcal{O}$ is of the form $\coprod_{\mathbf{\underline{m}}\in S}V_{\mathbf{\underline{m}}}$, where $S$ is a finite set and each complex in $p_{\mathbf{\underline{m}}}^{-1} (V_{\mathbf{\underline{m}}})$ is an indecomposable complex.
\end{ex}

\subsection{Automorphism groups of conflations}
For each $X\in\text{Obj}(\A)$, suppose that $X=n_1X_1\oplus n_2X_2\oplus\ldots\oplus n_tX_t$, where $X_i$ are indecomposable for $i=1,\ldots,t$ and $X_i\ncong X_j$ for $i\neq j$. Then we have
$$
\Aut(X)\cong(1+rad\End(A))\rtimes\sum\limits_{i=1}^{t}\GL(n_{i},\mathbb{K}).
$$
The rank of maximal torus of $\Aut(X)$ is denoted by $\rk\Aut(X)$. Let $n=n_{1}+n_{2}+\ldots+n_{t}$. Thus the number of indecomposable direct summands of $X$ is $n$, which is denoted by $\gamma(X)$. Note that $\gamma(X)=\rk\Aut(X)$. Let
$$
\gamma(\mathcal{O})=\max\{\gamma(X)~|~[X]\in\mathcal{O}\}
$$
for each constructible set $\mathcal{O}$ in $\fObj_\A(\mathbb{K})$.

Let $X\xrightarrow{f}Y\xrightarrow{g}Z$ be a conflation in $\A$ and $\Aut(X\xrightarrow{f}Y\xrightarrow{g}Z)$ denote the group of $(a_1,a_2,a_3)$ for $a_1\in\Aut(X)$, $a_2\in\Aut(Y)$ and $a_3\in\Aut(Z)$ such that the following diagram is commutative
\begin{equation*}
\xymatrix{
  X \ar[d]_{a_1} \ar[r]^{f} & Y \ar[d]_{a_2} \ar[r]^{g} & Z \ar[d]^{a_3} \\
  X \ar[r]^{f} & Y \ar[r]^{g} & Z   }
\end{equation*}

The homomorphism
$$
p_1:\Aut(X\xrightarrow{f}Y\xrightarrow{g}Z)\rightarrow\Aut(Y)
$$
is defined by $(a_1,a_2,a_3)\mapsto a_2$. If $p_1((a_1,a_2,a_3))=p_1((a^{\prime}_1,a_2,a^{\prime}_3))$ then $f(a_1-a^{\prime}_1)=0$ and ($a_3-a^{\prime}_3)g=0$. We have $a_1=a^{\prime}_1$ and $a_3=a^{\prime}_3$ since $f$ is an inflation and $g$ a deflation. Hence $p_1$ is an injective homomorphism of affine algebraic $\mathbb{K}$-groups and
\begin{equation}\label{equ3}
\rk(\Aut(X\xrightarrow{f}Y\xrightarrow{g}Z))=\rk~\text{Im}p_1\leq\rk\Aut(Y)
\end{equation}

Let
$$
p_2:\Aut(X\xrightarrow{f}Y\xrightarrow{g}Z)\rightarrow\Aut(X)\times\Aut(Z)
$$
be a homomorphism given by $(a_1,a_2,a_3)\mapsto (a_1,a_3)$. If $p_2((a_1,a_2,a_3))=p_2((a_1,a_2^{\prime},a_3))$, then $(a_2-a^{\prime}_2)f=0$ and $g(a_2-a^{\prime}_2)$=0, we have
$$
a_2-a^{\prime}_2\in(\Hom(Z,Y)g)\cap(f\Hom(Y,X)).
$$
Observe that $\text{Ker}p_2$ is a linear space. It follows that $\chi(\text{Ker}p_2)=1$ and
\begin{equation}\label{equ4}
\rk~\text{Im}(p_2)\leq\rk\Aut(X)+\rk\Aut(Z).
\end{equation}

Two conflations $X\xrightarrow{i}Y\xrightarrow{d}Z$ and $X^{\prime}\xrightarrow{i^{\prime}}Y\xrightarrow{d^{\prime}}Z^{\prime}$ in $\A$ are said to be equivalent if there exists a commutative diagram
\begin{equation*}
\xymatrix{
  X \ar[d]_{f} \ar[r]^{i} & Y \ar[d]_{1_{Y}} \ar[r]^{d} & Z \ar[d]^{g} \\
  X^{\prime} \ar[r]^{i^{\prime}} & Y \ar[r]^{d^{\prime}} & Z^{\prime}   }
\end{equation*}
where both $f$ and $g$ are isomorphisms. If the two conflations are equivalent, we write $X\xrightarrow{i}Y\xrightarrow{d}Z\sim X^{\prime}\xrightarrow{i^{\prime}}Y\xrightarrow{d^{\prime}}Z^{\prime}$. The equivalence class of $X\xrightarrow{i}Y\xrightarrow{d}Z$ is denoted by $\langle X\xrightarrow{i}Y\xrightarrow{d}Z\rangle$. Define
$$
V(\mathcal{O}_{1},\mathcal{O}_{2};Y)=\big\{\langle X\xrightarrow{i} Y\xrightarrow{d}Z\rangle~|~X\xrightarrow{i} Y\xrightarrow{d}Z\in\mathcal{S}, [X]\in\mathcal{O}_{1},[Z]\in\mathcal{O}_{2}\big\},
$$
where $\mathcal{S}$ is the collection of all conflations of $\A$.

\subsection{Associative algebras and Lie algebras}
\label{ai33}
For $f,g\in\CF(\fObj_\A)$, define $f\cdot g$ by $(f\cdot g)([X],[Y])=f([X])g([Y])$ for $([X],[Y])\in\fObj_\A(\mathbb{K})\times \fObj_\A(\mathbb{K})$. Thus $f\cdot g\in \CF(\fObj_\A\times\fObj_\A)$. The pushforward of $\pi_m$ is well-defined since $p_1$ is injective. The following definition of multiplication is taken from \cite[Definition~4.1]{joyceA2}.
\begin{dfn}
Using the following diagram
\begin{equation*}
\fObj_\A\times\fObj_\A\xleftarrow{\pi_l\times\pi_r}\fExact_\A\xrightarrow{\pi_m}\fObj_\A,
\end{equation*}
we can define the convolution multiplication
\begin{equation*}
\begin{gathered}
\CF(\fObj_\A\times\fObj_\A)\xrightarrow{(\pi_l\times\pi_r)^*}
\CF(\fExact_\A)\xrightarrow{(\pi_m)_!}\CF(\fObj_\A).
\end{gathered}
\end{equation*}

The multiplication $*:\CF(\fObj_\A)\times\CF(\fObj_\A)\rightarrow\CF(\fObj_\A)$ is a bilinear map defined by

\begin{equation*}
f*g=(\pi_m)_![(\pi_l\times\pi_r)^*(f\cdot g)]=(\pi_m)_![\pi_{l}^*(f)\cdot\pi_{r}^*(g)].
\end{equation*}
\end{dfn}

Let $\mathcal{O}_1$ and $\mathcal{O}_2$ be constructible subsets of $\fObj_\A(\mathbb{K})$, the meaning of $1_{\mathcal{O}_1}\ast1_{\mathcal{O}_2}$ can be understood as follows. The function $m_{\mathbf{\pi}_m}:\fExact_\A(\mathbb{K})\rightarrow\mathbb{Q}$, which is defined by
$$
m_{\mathbf{\pi}_m}([X\xrightarrow{f}Y\xrightarrow{g}Z])= \chi\big[\Aut(Y)/p_1\big(\Aut(X\xrightarrow{f} Y\xrightarrow{g}Z)\big)\big],
$$
is a locally constructible function on $\fExact_\A(\mathbb{K})$ by \cite[Proposition~4.16]{joyceJLMS06}, namely $m_{\mathbf{\pi}_m}|_{\mathcal{O}}$ is a constructible function on $\mathcal{O}$ for every constructible subset $\mathcal{O}\subseteq\fExact_\A(\mathbb{K})$.

For each $[Y]\in\fObj_\A(\mathbb{K})$,
$$
1_{\mathcal{O}_1} \ast 1_{\mathcal{O}_2}([Y])=\sum\limits_{c\in\Lambda_{\mathcal{O}_1\mathcal{O}_2}^Y}c\chi^{na}(Q_{c}),
$$
where
$$
\Lambda_{\mathcal{O}_1\mathcal{O}_2}^Y=\{c=m_{\pi_m}([A\xrightarrow{f}Y\xrightarrow{g}B])~|~[A]\in\mathcal{O}_1, [B]\in\mathcal{O}_2\}\setminus\{0\}
$$
is a finite set, and
$$
Q_{c}=\{[ A\xrightarrow{f}Y\xrightarrow{g}B]~|~[A]\in\mathcal{O}_{1},[B]\in\mathcal{O}_{2},m_{\mathbf{\pi}_m}([ A\xrightarrow{f}Y\xrightarrow{g}B])=c\}
$$
are constructible sets for $c\in\Lambda_{\mathcal{O}_1\mathcal{O}_2}^Y$. In fact, the 1-morphism $\pi_l\times\pi_r$ is of finite type by \cite[Theorem~8.4]{joyceA1}. Hence $(\pi_{l*}\times\pi_{r*})^{-1}(\mathcal{O}_1\times\mathcal{O}_2)$ is a constructible subset of $\fExact_\A$. Then
$$
\Lambda_{\mathcal{O}_1\mathcal{O}_2}^Y=m_{\mathbf{\pi}_m}\big[\big((\pi_{l*}\times\pi_{r*})^{-1}(\mathcal{O}_1\times\mathcal{O}_2)\big) \cap \big((\pi_{m*})^{-1}([Y])\big)\big]\setminus\{0\}
$$
is a finite set by \cite[Proposition~4.6]{joyceJLMS06}. Therefore
$$
Q_{c}=m_{\mathbf{\pi}_m}^{-1}(c)\cap[(\pi_{l*}\times\pi_{r*})^{-1}(\mathcal{O}_1\times\mathcal{O}_2)]\cap((\pi_{m*})^{-1}([Y]))
$$
are constructible for all $c\in\Lambda_{\mathcal{O}_1\mathcal{O}_2}^Y$.

For each $([X],[Z])\in\mathcal{O}_{1}\times\mathcal{O}_{2}$, let
$$
\Lambda_{XZ}^Y=\big\{c=m_{\pi_m}([X\xrightarrow{f}Y\xrightarrow{g}Z])~|~[X\xrightarrow{f}Y\xrightarrow{g}Z] \in\fExact_{\A}(\mathbb{K})\big\}
$$
and
$$
Q_{c}^{X,Z}=\big\{[X\xrightarrow{f}Y\xrightarrow{g}Z]~\Big|~m_{\mathbf{\pi}_m}([X\xrightarrow{f}Y\xrightarrow{g}Z])=c\big\},
$$
where $\Lambda_{XZ}^Y$ is a finite set and $Q_{c}^{X,Z}$ are constructible sets for all $c\in\Lambda_{XZ}^Y$. Then
$$(1_{[X]} \ast 1_{[Z]})([Y])=\sum\limits_{c\in\Lambda_{XZ}^Y}c\chi^{na}(Q_{c}^{X,Z}).$$

Let
$$
\pi_{1}:V(\mathcal{O}_{1},\mathcal{O}_{2};Y)\rightarrow\bigcup_{c\in\Lambda_{\mathcal{O}_1\mathcal{O}_2}^Y}Q_{c}
$$
be a morphism given by $\langle X\xrightarrow{f} Y\xrightarrow{g}Z\rangle\mapsto([X\xrightarrow{f} Y\xrightarrow{g}Z])$. For each fibre of $\pi_{1}$, $\chi^{\na}(\pi_{1}^{-1}([X\xrightarrow{f} Y\xrightarrow{g}Z]))=\chi\Big(\Aut(Y)/p_1\big(\Aut(X\xrightarrow{f} Y\xrightarrow{g}Z)\big)\Big)$. The set
$$
\Big\{\chi\Big(\Aut(Y)/p_1\big(\Aut(X\xrightarrow{f} Y\xrightarrow{g}Z)\big)\Big)~|~[X\xrightarrow{f} Y\xrightarrow{g}Z]\in\bigcup_{c \in\Lambda_{\mathcal{O}_1\mathcal{O}_2}^Y} Q_{c}\Big\}
$$
is finite since $\chi(\Aut(Y)/\text{Im}p_1)=m_{\mathbf{\pi}_m}([X\xrightarrow{f} Y\xrightarrow{g}Z])$.

If $U\subseteq V(\mathcal{O}_{1},\mathcal{O}_{2};Y)$ is a constructible set, then
\begin{equation}\label{formula1}
\chi^{\na}(U)=\sum_{c}c\chi^{na}(P_c),
\end{equation}
where $P_c=\big\{[X\xrightarrow{f} Y\xrightarrow{g}Z]~|~\langle X\xrightarrow{f} Y\xrightarrow{g}Z\rangle\in U, m_{\pi_m}([X\xrightarrow{f} Y\xrightarrow{g}Z])=c\big\}$.

Consequently, we have the na\"{\i}ve Euler characteristics of $V([X],[Z];Y)$ and $V(\mathcal{O}_{1},\mathcal{O}_{2};Y)$.

\begin{lem}
Let $X,Y,Z\in\rm{Obj}(\A)$ and $\mathcal{O}_{1},\mathcal{O}_{2}$ be constructible sets. Then
$$
\chi^{\na}(V([X],[Z];Y))=\sum\limits_{c\in\Lambda_{XZ}^Y}c\chi^{na}(Q_{c}^{X,Z})=1_{[X]}*1_{[Z]}([Y]),
$$
$$
\chi^{\na}(V(\mathcal{O}_{1},\mathcal{O}_{2};Y))=\sum\limits_{c\in\Lambda_{\mathcal{O}_1\mathcal{O}_2}^Y}c\chi^{\na}(Q_c) =1_{\mathcal{O}_1} \ast 1_{\mathcal{O}_2}([Y]).
$$
\end{lem}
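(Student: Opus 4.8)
The plan is to deduce both displayed identities directly from the formula \eq{formula1}, which already expresses $\chi^{\na}(U)$, for an arbitrary constructible subset $U\subseteq V(\mathcal{O}_1,\mathcal{O}_2;Y)$, as the weighted sum $\sum_c c\,\chi^{\na}(P_c)$. All that remains is to specialize $U$ to the whole of $V(\mathcal{O}_1,\mathcal{O}_2;Y)$, to identify the strata $P_c$ with the sets $Q_c$ (resp.\ $Q_c^{X,Z}$), and to recognize the outcome as the convolution product computed in Subsection \ref{ai33}.

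First I would record that $V(\mathcal{O}_1,\mathcal{O}_2;Y)$ is itself constructible, so that $\chi^{\na}$ is defined on it. This is inherited through $\pi_1$: the target $\bigcup_{c}Q_c=m_{\mathbf{\pi}_m}^{-1}(\Lambda^Y_{\mathcal{O}_1\mathcal{O}_2})\cap(\pi_{l*}\times\pi_{r*})^{-1}(\mathcal{O}_1\times\mathcal{O}_2)\cap\pi_{m*}^{-1}([Y])$ is a constructible subset of $\fExact_\A(\K)$ by the discussion preceding the lemma, and $\pi_1:V(\mathcal{O}_1,\mathcal{O}_2;Y)\to\bigcup_c Q_c$ is a surjective pseudomorphism whose fibres are the homogeneous spaces $\Aut(Y)/p_1(\Aut(X\xrightarrow{f}Y\xrightarrow{g}Z))$.

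Next I would apply \eq{formula1} with $U=V(\mathcal{O}_1,\mathcal{O}_2;Y)$. For this choice the membership condition $\langle X\xrightarrow{f}Y\xrightarrow{g}Z\rangle\in U$ is exactly $[X]\in\mathcal{O}_1$ and $[Z]\in\mathcal{O}_2$, so the stratum $P_c$ coincides with $Q_c$ and the index set becomes $\Lambda^Y_{\mathcal{O}_1\mathcal{O}_2}$; this gives the first equality $\chi^{\na}(V(\mathcal{O}_1,\mathcal{O}_2;Y))=\sum_{c\in\Lambda^Y_{\mathcal{O}_1\mathcal{O}_2}}c\,\chi^{\na}(Q_c)$. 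The second equality is then nothing but the explicit description of $1_{\mathcal{O}_1}\ast 1_{\mathcal{O}_2}([Y])$ already obtained in Subsection \ref{ai33}. The first line of the lemma is the special case $\mathcal{O}_1=\{[X]\}$, $\mathcal{O}_2=\{[Z]\}$, where $P_c=Q_c^{X,Z}$, $\Lambda^Y_{\mathcal{O}_1\mathcal{O}_2}=\Lambda^Y_{XZ}$, and the product formula reads $1_{[X]}\ast 1_{[Z]}([Y])=\sum_{c\in\Lambda^Y_{XZ}}c\,\chi^{\na}(Q_c^{X,Z})$.

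The only genuinely non-formal ingredient is the fibre Euler characteristic of $\pi_1$ underlying \eq{formula1}: one must know that the equivalence classes lying over a fixed isomorphism class $[X\xrightarrow{f}Y\xrightarrow{g}Z]$ form the quotient $\Aut(Y)/p_1(\Aut(X\xrightarrow{f}Y\xrightarrow{g}Z))$, so that $\chi^{\na}$ of the fibre equals $m_{\mathbf{\pi}_m}([X\xrightarrow{f}Y\xrightarrow{g}Z])$. Since this is already in place and $\pi_1$ is a surjective pseudomorphism, stratifying $\bigcup_c Q_c$ by the value of $m_{\mathbf{\pi}_m}$ and applying Lemma \ref{lem2.1} on each stratum yields \eq{formula1}, and the lemma follows by the bookkeeping above. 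I expect no further obstacle beyond this fibre analysis, which is the heart of the matter.
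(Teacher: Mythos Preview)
Your proposal is correct and matches the paper's approach: the paper presents this lemma without an explicit proof, introducing it with ``Consequently'' immediately after \eq{formula1}, so it is treated as the direct specialization $U=V(\mathcal{O}_1,\mathcal{O}_2;Y)$ (respectively $U=V([X],[Z];Y)$) together with the identification $P_c=Q_c$ (respectively $Q_c^{X,Z}$) and the already-computed expression for the convolution product. Your write-up simply makes explicit the bookkeeping the paper leaves implicit.
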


The following result is due to \cite[Proposition~6]{dxx10} and \cite[Theorem~4.3]{joyceA2}.
\begin{thm}
The $\mathbb{Q}$-space $\CF(\fObj_\A)$ is an associative $\mathbb{Q}$-algebra, with convolution multiplication $*$ and identity $1_{[0]}$, where $1_{[0]}$ is the characteristic function of $[0]\in\fObj_\A(\mathbb{K})$.
\end{thm}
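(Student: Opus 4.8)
The plan is to check the two algebra axioms---associativity of $*$ and that $1_{[0]}$ is a two-sided identity---by reducing each to a pointwise identity of na\"{\i}ve Euler characteristics of configuration spaces. Since $*$ is bilinear and $\CF(\fObj_\A)$ is spanned over $\Q$ by characteristic functions $1_{\mathcal{O}}$, it suffices to establish
$$(1_{\mathcal{O}_1}*1_{\mathcal{O}_2})*1_{\mathcal{O}_3}=1_{\mathcal{O}_1}*(1_{\mathcal{O}_2}*1_{\mathcal{O}_3})\quad\text{and}\quad 1_{[0]}*1_{\mathcal{O}}=1_{\mathcal{O}}=1_{\mathcal{O}}*1_{[0]},$$
and for each it is enough to compare the values of both sides at an arbitrary $[Y]\in\fObj_\A(\K)$. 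The finiteness and constructibility needed throughout are guaranteed by the finite type properties of $\pi_l\times\pi_r$ and $\pi_m$ recorded earlier.

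For associativity I would introduce the space $V(\mathcal{O}_1,\mathcal{O}_2,\mathcal{O}_3;Y)$ of equivalence classes of two-step flags $0\subseteq Y_1\subseteq Y_2\subseteq Y$, realized by conflations in $\A$, whose successive subquotients $Y_1$, $Y_2/Y_1$ and $Y/Y_2$ have classes in $\mathcal{O}_1$, $\mathcal{O}_2$ and $\mathcal{O}_3$ respectively, and show that both bracketings evaluated at $[Y]$ equal $\chi^{\na}(V(\mathcal{O}_1,\mathcal{O}_2,\mathcal{O}_3;Y))$. There are two forgetful maps on this space: one remembering $Y_1\subseteq Y$, which exhibits $Y/Y_1$ as an extension parametrized by $\mathcal{O}_2$ and $\mathcal{O}_3$ and so corresponds to $1_{\mathcal{O}_1}*(1_{\mathcal{O}_2}*1_{\mathcal{O}_3})$; the other remembering $Y_2\subseteq Y$, which exhibits $Y_2$ as an extension parametrized by $\mathcal{O}_1$ and $\mathcal{O}_2$ and so corresponds to $(1_{\mathcal{O}_1}*1_{\mathcal{O}_2})*1_{\mathcal{O}_3}$. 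Using the lemma preceding this theorem to rewrite the outer convolution, and stratifying the (generally non-characteristic) intermediate function $1_{\mathcal{O}_1}*1_{\mathcal{O}_2}$ (resp. $1_{\mathcal{O}_2}*1_{\mathcal{O}_3}$) by its finitely many values, I would apply Lemma \ref{lem2.1} and Proposition \ref{prop1}(3) fibrewise together with \eq{formula1} to identify $((1_{\mathcal{O}_1}*1_{\mathcal{O}_2})*1_{\mathcal{O}_3})([Y])$ and $(1_{\mathcal{O}_1}*(1_{\mathcal{O}_2}*1_{\mathcal{O}_3}))([Y])$ with the Euler characteristic of $V(\mathcal{O}_1,\mathcal{O}_2,\mathcal{O}_3;Y)$ computed along the two maps. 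As the two maps stratify the same space, the values agree.

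The identity axiom is direct. By the preceding lemma, $1_{[0]}*1_{\mathcal{O}}([Y])=\chi^{\na}(V([0],\mathcal{O};Y))$. A conflation $X\to Y\to Z$ with $[X]=[0]$ forces $X=0$ and $Z\cong Y$, and any two such conflations are equivalent; hence $V([0],\mathcal{O};Y)$ consists of a single class when $[Y]\in\mathcal{O}$ and is empty otherwise, so that $\chi^{\na}(V([0],\mathcal{O};Y))=1_{\mathcal{O}}([Y])$. The right identity follows symmetrically by taking $Z=0$.

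The main obstacle will be the correct bookkeeping of the automorphism factors $m_{\pi_m}$ that are built into the pushforward $(\pi_m)_!$. Concretely, each bracketing weights a two-step flag by a product of two factors of the form $\chi\big(\Aut(-)/p_1(\Aut(\text{conflation}))\big)$---an inner and an outer one---and these two products are attached in different orders on the two sides. The heart of the argument is therefore to show that, fibre by fibre, the composite Euler-characteristic weights coincide, rather than merely that the underlying sets of flags match. I expect this to follow from the injectivity of $p_1$ (so that $\mathrm{Ker}\,p_1$ is trivial, cf. \eq{equ3}) together with the multiplicativity of $\chi$ in Proposition \ref{prop1}(2),(4) and Lemma \ref{lem2.1}; the delicate point is the base-change compatibility between pushforward and pullback along the relevant square of moduli stacks, which is precisely what forces the two iterated convolutions to agree at the level of weighted counts.
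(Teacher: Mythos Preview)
The paper does not give a proof: it simply refers to Joyce \cite[Theorem~4.3]{joyceA2}. Joyce's argument there is purely functorial. He sets up a commutative (indeed 2-Cartesian) square of moduli stacks relating the two ways of building a two-step flag, and deduces associativity from the composition law $(\phi\circ\psi)_! = \phi_!\circ\psi_!$ for the stacky pushforward together with a base-change identity $(\pi_m)_!\circ(\pi_l\times\pi_r)^* = (\cdots)^*\circ(\cdots)_!$ along that square. The automorphism weights $m_{\pi_m}$ never have to be unpacked by hand; they are absorbed into the definition of $(\cdot)_!$ and the functoriality statements do the work.

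Your route is the concrete, ``flag-counting'' one in the spirit of Riedtmann and \cite{dxx10}: both bracketings evaluated at $[Y]$ should equal $\chi^{\na}$ of the same two-step filtration space, and the issue is checking that the product of the two $m_{\pi_m}$ factors attached to the two forgetful maps agrees on each stratum. This is correct and will go through, but note that the step you flag as ``I expect this to follow from\ldots'' is exactly the content of Joyce's base-change identity rephrased pointwise; carrying it out directly amounts to verifying that for a two-step conflation the quotient $\Aut(Y)/p_1(\Aut(\text{flag}))$ fibres over $\Aut(Y)/p_1(\Aut(\text{outer conflation}))$ with fibre $\Aut(Y_2)/p_1(\Aut(\text{inner conflation}))$ (and symmetrically for the other bracketing), so that the Euler characteristics multiply by Proposition~\ref{prop1}(4). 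Your approach buys concreteness and avoids the stack-functorial machinery; Joyce's buys brevity once that machinery is in place.
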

\begin{proof}
The proof of the theorem is quite similar to that in \cite[Theorem~4.3]{joyceA2} and so is omitted.
\end{proof}

Joyce defined $\CFi(\fObj_\A)$ to be the subspace of $\CF(\fObj_\A)$ such that if $f([X])\neq0$ then $X$ is an indecomposable object in $\A$ for every $f\in\CFi(\fObj_\A)$. There is a result of \cite[Theorem~13]{dxx10} and \cite[Theorem~4.9]{joyceA2}.
\begin{thm}
The $\mathbb{Q}$-space $\CFi(\fObj_\A)$ is a Lie algebra under the Lie bracket $[f,g]=f*g-g*f$ for $f,g\in\CFi(\fObj_\A)$.
\end{thm}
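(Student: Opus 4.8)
The plan is to verify the two ingredients of a Lie algebra structure separately. Antisymmetry of $[f,g]=f*g-g*f$ is immediate, and the Jacobi identity is automatic: since $(\CF(\fObj_\A),*)$ is an associative $\Q$-algebra by the previous theorem, its commutator bracket satisfies the Jacobi identity on all of $\CF(\fObj_\A)$, in particular on the subspace $\CFi(\fObj_\A)$. Hence the only substantive point is that $[f,g]\in\CFi(\fObj_\A)$ whenever $f,g\in\CFi(\fObj_\A)$, i.e.\ that $f*g-g*f$ is supported on indecomposables. By bilinearity, and because every element of $\CFi(\fObj_\A)$ is a finite $\Q$-combination of characteristic functions $1_{\mathcal O}$ of indecomposable constructible sets (take level sets), it suffices to treat $f=1_{\mathcal O_1}$ and $g=1_{\mathcal O_2}$ with $\mathcal O_1,\mathcal O_2$ indecomposable.

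Next I would reduce to a pointwise statement about Euler characteristics. By the lemma computing the convolution product, $1_{\mathcal O_1}*1_{\mathcal O_2}([Y])=\chi^{\na}(V(\mathcal O_1,\mathcal O_2;Y))$ for every $[Y]$, and likewise for the opposite product. Thus I must show $\chi^{\na}(V(\mathcal O_1,\mathcal O_2;Y))=\chi^{\na}(V(\mathcal O_2,\mathcal O_1;Y))$ whenever $Y$ is decomposable, i.e.\ $\gamma(Y)\ge 2$. Fibering the projection $\langle X\to Y\to Z\rangle\mapsto([X],[Z])$ from $V(\mathcal O_1,\mathcal O_2;Y)$ onto $\mathcal O_1\times\mathcal O_2$ and integrating fibrewise via Lemma \ref{lem2.1} over the constructible strata on which $\chi^{\na}(V([X],[Z];Y))$ is constant, the relabelling $([X],[Z])\mapsto([Z],[X])$ identifies $\mathcal O_1\times\mathcal O_2$ with $\mathcal O_2\times\mathcal O_1$, so the family identity reduces to the pointwise equality
\begin{equation*}
\chi^{\na}(V([X],[Z];Y))=\chi^{\na}(V([Z],[X];Y))\quad\text{for all indecomposable }X,Z\text{ and decomposable }Y.
\end{equation*}

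The heart of the argument is a torus fixed-point computation proving this symmetry. Let $T$ be a maximal torus of $\Aut(Y)$, so $\rk T=\gamma(Y)\ge 2$, acting on $V([X],[Z];Y)$ by $t\cdot\langle X\xrightarrow{i}Y\xrightarrow{d}Z\rangle=\langle X\xrightarrow{ti}Y\xrightarrow{dt^{-1}}Z\rangle$. Over $\mathbb{K}$ the na\"{\i}ve Euler characteristic of a constructible set with torus action equals that of its fixed locus (as used in \cite{dxx10}), so $\chi^{\na}(V([X],[Z];Y))=\chi^{\na}(V([X],[Z];Y)^T)$. Unwinding the definition of equivalence of conflations, a class $\langle i,d\rangle$ is $T$-fixed precisely when, for each $t\in T$, there are $a\in\Aut(X)$, $c\in\Aut(Z)$ with $ia=ti$ and $dt=cd$, i.e.\ $(a,t,c)\in\Aut(X\xrightarrow{i}Y\xrightarrow{d}Z)$, so that $T\subseteq\Im p_1$. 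Since $X,Z$ are indecomposable, $\rk\Aut(X)=\rk\Aut(Z)=1$; combining the injectivity of $p_1$ in \eq{equ3} with $\chi(\ker p_2)=1$ and \eq{equ4} gives $\rk\Im p_1=\rk\Aut(X\xrightarrow{i}Y\xrightarrow{d}Z)=\rk\Im p_2\le 2$. As $T$ is a torus inside $\Im p_1$, the existence of a fixed point forces $\gamma(Y)=\rk T\le 2$; hence for $\gamma(Y)\ge 3$ both fixed loci are empty and both sides vanish.

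It remains to treat $\gamma(Y)=2$, which I expect to be the main obstacle. Here $T\cong\mathbb{G}_m^2$ is a maximal torus of $\Im p_1$, and its weight decomposition splits $Y=W_1\oplus W_2$ into two indecomposable summands. Transporting $T$ through $p_1^{-1}$ onto a maximal torus $T'$ of the automorphism group of the conflation and then through the map $p_2$ (injective on tori, since $\ker p_2$ is a linear space), one sees that the induced scalar actions on the indecomposables $X$ and $Z$ are homogeneous of a single weight; equivariance of $i$ and $d$ then forces $i(X)$ to land in one summand $W_j$ and $d$ to respect the splitting. The upshot is that every $T$-fixed conflation is split, so $Y\cong X\oplus Z$ and $V([X],[Z];Y)^T$ is the constructible set of splittings of $Y$ of the prescribed summand types; the transposition $Y=X\oplus Z\to Z\oplus X$ furnishes a pseudoisomorphism between $V([X],[Z];Y)^T$ and $V([Z],[X];Y)^T$, whence the two na\"{\i}ve Euler characteristics agree and the pointwise symmetry --- and with it the theorem --- follows. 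The delicate points, carried out in the exact-category rather than abelian setting, are to justify the weight splitting of $Y$ through inflations and deflations using idempotent-completeness and the Krull--Schmidt property, and to make the transposition bijection explicit and $T$-equivariant.
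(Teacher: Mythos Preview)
Your proposal is correct and follows the same torus fixed-point/rank-comparison strategy that underlies Joyce's proof \cite[Theorem~4.9]{joyceA2}, which is precisely what the paper invokes; indeed the paper's own Lemma~\ref{lem2} and Remark~\ref{rem3.10} carry out exactly the ``delicate point'' you flag (using idempotent completeness to split $Y$ along torus weights in the exact-category setting) and yield the dichotomy that for indecomposable $X,Z$ and decomposable $Y$ either $Y\cong X\oplus Z$ or both products vanish. The only cosmetic difference is packaging: Joyce and the paper argue conflation-by-conflation that $m_{\pi_m}\neq 0$ forces $\rk\Aut(Y)=\rk\Im p_1\le\rk\Aut(X)+\rk\Aut(Z)$, whereas you let the maximal torus of $\Aut(Y)$ act globally on $V([X],[Z];Y)$ and read off the same rank inequality from the fixed locus---these are equivalent via the fibration $\pi_1$ with fibres $\Aut(Y)/\Im p_1$. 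Your final transposition bijection is fine but can be replaced by the explicit computations $1_{[X]}*1_{[Z]}([X\oplus Z])=1$ for $X\ncong Z$ and $1_{[X]}^{*2}([2X])=2$ already recorded in the paper.
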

\begin{proof}
The proof is the same as the one used in \cite[Theorem~4.9]{joyceA2}.
\end{proof}

\subsection{The algebra $\CF^{\text{KS}}(\fObj_\A)$}
\begin{lem}\label{lem1}
Let $\mathcal{O}_1$ and $\mathcal{O}_2$ be two constructible subsets of $\fObj_\A(\K)$. For any $Y\in \Obj(\A)$, if $1_{\mathcal{O}_1} \ast 1_{\mathcal{O}_2} ([Y])\neq 0$, then there exist $X,Z \in \Obj(\A)$ such that $[X]\in \mathcal{O}_1$, $[Z]\in \mathcal{O}_2$ and $1_{[X]}*1_{[Z]}([Y])\neq0$.
\end{lem}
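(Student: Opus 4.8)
The plan is to reduce the statement to an additivity (Fubini-type) property of the na\"{\i}ve Euler characteristic, and then to observe that a nonzero integral cannot have an integrand that vanishes identically. By the lemma immediately preceding this one, the hypothesis $1_{\mathcal{O}_1}\ast 1_{\mathcal{O}_2}([Y])\neq 0$ is the same as $\chi^{\na}\bigl(V(\mathcal{O}_1,\mathcal{O}_2;Y)\bigr)\neq 0$, and for each individual pair we have $\chi^{\na}\bigl(V([X],[Z];Y)\bigr)=1_{[X]}\ast 1_{[Z]}([Y])$. Hence it suffices to produce one pair $([X],[Z])$ with $[X]\in\mathcal{O}_1$, $[Z]\in\mathcal{O}_2$ and $\chi^{\na}\bigl(V([X],[Z];Y)\bigr)\neq 0$.

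First I would introduce the map
$$
\Phi:V(\mathcal{O}_1,\mathcal{O}_2;Y)\longrightarrow \mathcal{O}_1\times\mathcal{O}_2,\qquad \langle X\xrightarrow{f}Y\xrightarrow{g}Z\rangle\longmapsto([X],[Z]),
$$
which is well defined because the equivalence relation defining $\langle\,\cdot\,\rangle$ fixes $Y$ by $1_Y$ and therefore preserves the isomorphism classes $[X]$ and $[Z]$. Its fibre over a point $([X],[Z])$ with $[X]\in\mathcal{O}_1$, $[Z]\in\mathcal{O}_2$ is exactly $V([X],[Z];Y)$, and over any other point it is empty. Since $\Phi$ is the restriction of the pseudomorphism induced by the finite type $1$-morphism $\pi_l\times\pi_r$ (finite type by \cite[Theorem~8.4]{joyceA1}), the na\"{\i}ve pushforward $\Phi^{\na}_!\bigl(1_{V(\mathcal{O}_1,\mathcal{O}_2;Y)}\bigr)$ is a constructible function by \cite[Theorem~4.9]{joyceJLMS06}, and its value at $([X],[Z])$ is $\chi^{\na}(\Phi^{-1}([X],[Z]))=\chi^{\na}\bigl(V([X],[Z];Y)\bigr)=1_{[X]}\ast 1_{[Z]}([Y])$. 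I would then establish the identity
$$
1_{\mathcal{O}_1}\ast 1_{\mathcal{O}_2}([Y])=\chi^{\na}\bigl(V(\mathcal{O}_1,\mathcal{O}_2;Y)\bigr)=\int_{([X],[Z])\in\mathcal{O}_1\times\mathcal{O}_2}1_{[X]}\ast 1_{[Z]}([Y]),
$$
by stratifying $\mathcal{O}_1\times\mathcal{O}_2$ into finitely many constructible pieces on which the (now genuinely constructible) function $([X],[Z])\mapsto\chi^{\na}\bigl(V([X],[Z];Y)\bigr)$ is constant, applying Lemma~\ref{lem2.1} to $\Phi$ over each stratum (where all fibres share one Euler characteristic), and summing the contributions via Proposition~\ref{prop1}(3). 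Granting this, the hypothesis forces the integrand not to vanish identically over $\mathcal{O}_1\times\mathcal{O}_2$, so there is a point $([X],[Z])$ there at which $1_{[X]}\ast 1_{[Z]}([Y])\neq 0$; such a point satisfies $[X]\in\mathcal{O}_1$ and $[Z]\in\mathcal{O}_2$ by construction, which is the desired conclusion.

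The main obstacle is the middle identity of the previous paragraph. Lemma~\ref{lem2.1} as stated only treats a pseudomorphism all of whose fibres have \emph{one} common Euler characteristic, so the real content is the decomposition of the base $\mathcal{O}_1\times\mathcal{O}_2$ into finitely many constructible strata on which the fibrewise na\"{\i}ve Euler characteristic is constant, together with checking that $\Phi$ restricts to an honest pseudomorphism over each stratum; this is where \cite[Theorem~8.4]{joyceA1} and the constructibility of $\Phi^{\na}_!$ from \cite[Theorem~4.9]{joyceJLMS06} do the work. Once the integrand is known to be a constructible function and the integral is shown to compute $\chi^{\na}\bigl(V(\mathcal{O}_1,\mathcal{O}_2;Y)\bigr)$, the final logical step---that a nonzero integral detects a nonzero value of the integrand---is immediate from the definition of the integral. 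An essentially equivalent route would instead decompose each $Q_c$ occurring in $1_{\mathcal{O}_1}\ast 1_{\mathcal{O}_2}([Y])=\sum_c c\,\chi^{\na}(Q_c)$ according to its end terms $([X],[Z])$, but it meets the same constructibility and additivity points.
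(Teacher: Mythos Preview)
Your argument is correct and follows essentially the same strategy as the paper's proof: both establish the identity
\[
1_{\mathcal{O}_1}\ast 1_{\mathcal{O}_2}([Y])=\sum_k \bigl(1_{[X_k]}\ast 1_{[Z_k]}([Y])\bigr)\cdot\chi^{\na}(\mathcal{P}^{X_k,Z_k})
\]
by stratifying the base according to the values of the constructible function $([X],[Z])\mapsto 1_{[X]}\ast 1_{[Z]}([Y])$ and applying Lemma~\ref{lem2.1} on each stratum, then conclude that a nonzero sum forces some summand to be nonzero. The only cosmetic difference is that the paper routes through the genuine constructible subsets $Q_c\subset\fExact_\A(\K)$ and pushes forward the weight $m_{\pi_m}$ along $\pi_2$ (exactly the ``equivalent route'' you mention at the end), which makes the pseudomorphism and constructibility checks literal citations of \cite[Proposition~4.6, Theorem~4.9]{joyceJLMS06}, whereas you work directly on $V(\mathcal{O}_1,\mathcal{O}_2;Y)$ and must argue that $\Phi$ inherits the pseudomorphism property; both lead to the same constructible function on the base and the same conclusion.
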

\begin{proof}
Let $Q_{c}$ and $\Lambda_{\mathcal{O}_1\mathcal{O}_2}^Y$ be as in Section \ref{ai33}. Let
$$
\mathbf{\pi}_{2}:\bigcup_{c\in\Lambda_{\mathcal{O}_1\mathcal{O}_2}^Y}Q_{c}\rightarrow(\mathbf{\pi}_{l*} \times\mathbf{\pi}_{r*})\Big(\bigcup_{c\in\Lambda_{\mathcal{O}_1\mathcal{O}_2}^Y}Q_c\Big)
$$
be a map which maps $[X\xrightarrow{i} Y\xrightarrow{d}Z]$ to $([X],[Z])$ and
$$
m_{m}=m_{\pi_m}|_{\bigcup_cQ_c}.
$$
It is easy to see that $m_{m}$ is a constructible function over $\bigcup_{c\in\Lambda_{\mathcal{O}_1\mathcal{O}_2}^Y}Q_{c}$.

Because $\pi_l\times\pi_r$ is a $1$-morphism, $\pi_2$ is a pseudomorphism by \cite[Proposition~4.6]{joyceJLMS06}. Thus $\pi_2(\bigcup_{c\in\Lambda_{\mathcal{O}_1\mathcal{O}_2}^{Y}}Q_c)$ is constructible and the na\"{\i}ve pushforward $(\pi_{2})_!^{\na}(m_{m})$ of $m_{m}$ to $\pi_2(\bigcup_{c\in\Lambda_{\mathcal{O}_1\mathcal{O}_2}^{Y}}Q_c)$ exists. Note that $(\pi_{2})_!^{\na}(m_{m})$ is a constructible function on $\pi_2(\bigcup_{c\in\Lambda_{\mathcal{O}_1\mathcal{O}_2}^{Y}}Q_c)$. In fact
$$
(\pi_{2})_!^{\na}(m_{m})([X],[Z])=1_{[X]}*1_{[Z]}([Y])
$$
for all $([X],[Z])\in\pi_2(\bigcup_{c\in\Lambda_{\mathcal{O}_1\mathcal{O}_2}^{Y}}Q_c)$. Therefore
$$
\big\{1_{[X]}*1_{[Z]}([Y])~|~([X],[Z])\in\pi_{2}(\bigcup_{c\in\Lambda_{\mathcal{O}_1\mathcal{O}_2}^{Y}}Q_c)\big\}
$$
is a finite set.

Let $\{([X_1],[Z_1]),\ldots,([X_n],[Z_n])\}$ be a complete set of representatives for $([X],[Z])\in\pi_{2}(\bigcup_{c\in\Lambda_{\mathcal{O}_1\mathcal{O}_2}^{Y}} Q_c)$ such that
$$1_{[X_i]}*1_{[Z_i]}([Y])\neq 1_{[X_j]}*1_{[Z_j]}([Y])$$
for $i\neq j$. Set
$$
\mathcal{P}^{X_k,Z_k}=\Big\{([A],[B])\in\pi_{2}\Big(\bigcup_{c\in\Lambda_{\mathcal{O}_1\mathcal{O}_2}^{Y}} Q_c\Big)~|~1_{[A]}*1_{[B]}([Y])=1_{[X_k]}*1_{[Z_k]}([Y])\Big\}
$$
for all $1\leq k\leq n$.
Then $\mathcal{P}^{X_k,Z_k}$ are constructible sets for all $1\leq k\leq n$ since
$$
\mathcal{P}^{X_k,Z_k}=\big((\pi_{2})_!^{\na}(m_{m})\big)^{-1}(c_k),
$$
where $c_k=(\pi_{2})_!^{\na}(m_{m})([X_k],[Z_k])$. Let $\pi=\pi_2\circ\pi_1$ which maps $\langle X\xrightarrow{i} Y\xrightarrow{d}Z\rangle$ to $([X],[Z])$. For each $([X],[Z])\in\pi_{2}(\bigcup_{c\in\Lambda_{\mathcal{O}_1\mathcal{O}_2}^{Y}}Q_c)$,  $$
\chi^{\na}(\pi^{-1}([X],[Z]))=\chi^{\na}(V([X_k],[Z_k];Y))
$$
for some $k$. According to Lemma \ref{lem2.1}, we have
$$
1_{\mathcal{O}_1} \ast 1_{\mathcal{O}_2}([Y])=\chi^{\na}(V(\mathcal{O}_{1},\mathcal{O}_{2};Y))=\sum\limits_{k=1}^{n}\chi^{\na}(V([X_k],[Z_k];Y))\cdot\chi^{\na} (\mathcal{P}^{X_k,Z_k})
$$
$$
=\sum\limits_{k=1}^{n}(1_{[X_k]}*1_{[Z_k]})([Y])\cdot\chi^{\na}(\mathcal{P}^{X_k,Z_k}).
$$
There exists $([X_k],[Z_k])$ for some $k\in\{1,\ldots,n\}$ such that $1_{[X_k]}*1_{[Z_k]}([Y])\neq0$ since $1_{\mathcal{O}_1} \ast 1_{\mathcal{O}_2}([Y])\neq0$.
\end{proof}

Let $\textbf{D}_n(\mathbb{K})$ denote the group of invertible diagonal matrices in $\textbf{GL}(n,\mathbb{K})$.

The following lemma is related to Riedtmann\cite[Lemma 2.2]{rie94}.
\begin{lem}\label{lem2}
Let $X,Y,Z\in\Obj(\A)$ and $X\xrightarrow{f}Y\xrightarrow{g}Z$ be a conflation in $\A$. If $m_{\pi_m}([X\xrightarrow{f}Y\xrightarrow{g}Z])\neq0$, then $\gamma(Y)\leq\gamma(X)+\gamma(Z)$. In particular, $\gamma(Y)=\gamma(X)+\gamma(Z)$ if and only if $Y\cong X\oplus Z$.
\end{lem}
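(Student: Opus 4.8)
The plan is to translate every occurrence of $\gamma$ into the rank of a maximal torus, and then to extract both the inequality and the equality case from the two homomorphisms $p_1,p_2$ of the previous subsection, using the principle that the action of a maximal torus on a variety preserves the Euler characteristic (the analogue, here, of the $\mathbb{C}^*$-fixed-point argument recalled in the introduction). Throughout I use that $\gamma(W)=\rk\Aut(W)$ for every $W\in\Obj(\A)$.

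For the inequality I would start from the identity $m_{\pi_m}([X\xrightarrow{f}Y\xrightarrow{g}Z])=\chi\big(\Aut(Y)/\Im p_1\big)$. Let $T$ be a maximal torus of $\Aut(Y)$, acting on $\Aut(Y)/\Im p_1$ by left translation. Since the Euler characteristic of a variety equals that of its $T$-fixed locus, the hypothesis $m_{\pi_m}\neq 0$ forces this fixed locus to be nonempty; a fixed coset $g\,\Im p_1$ satisfies $g^{-1}Tg\subseteq\Im p_1$, so $\Im p_1$ contains a maximal torus of $\Aut(Y)$ and hence $\rk\Im p_1=\rk\Aut(Y)=\gamma(Y)$. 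Combining this with \eq{equ3} (so that $\rk\Aut(X\xrightarrow{f}Y\xrightarrow{g}Z)=\rk\Im p_1$, since $p_1$ is injective), with the fact that $\mathrm{Ker}\,p_2$ is a linear space and therefore unipotent (which yields $\rk\Aut(X\xrightarrow{f}Y\xrightarrow{g}Z)=\rk\Im p_2$), and with \eq{equ4}, I obtain the chain
\[
\gamma(Y)=\rk\Im p_1=\rk\Aut(X\xrightarrow{f}Y\xrightarrow{g}Z)=\rk\Im p_2\le\gamma(X)+\gamma(Z).
\]

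For the equivalence, the direction $Y\cong X\oplus Z\Rightarrow\gamma(Y)=\gamma(X)+\gamma(Z)$ is immediate from the Krull--Schmidt property, as the number of indecomposable summands is additive on direct sums. For the converse I would use that the equality forces every step in the chain above to be an equality; in particular $\rk\Im p_2=\rk\Aut(X)+\rk\Aut(Z)=\rk\big(\Aut(X)\times\Aut(Z)\big)$, so a maximal torus of $\Im p_2$ is already maximal in $\Aut(X)\times\Aut(Z)$. Consequently $\Im p_2$ contains the central torus $\{(t\cdot\id_X,s\cdot\id_Z)\mid t,s\in\mathbb{K}^*\}$, which lies in every maximal torus. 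Writing $\xi\in\Ext^1_\A(Z,X)$ for the class of the conflation, a pair $(a_1,a_3)$ lies in $\Im p_2$ exactly when $(a_1)_*\xi=(a_3)^*\xi$; applied to $(t\cdot\id_X,s\cdot\id_Z)$ this reads $t\xi=s\xi$, and choosing $t\neq s$ gives $\xi=0$. Hence the conflation splits and $Y\cong X\oplus Z$.

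The main obstacle I expect is the torus-localization step together with the faithful translation of membership in $\Im p_2$ into a relation on the extension class. Two standard facts carry the weight: that a normal unipotent subgroup (here $\mathrm{Ker}\,p_2$, which is a linear space with $\chi=1$) preserves the reductive rank, giving $\rk\Aut(X\xrightarrow{f}Y\xrightarrow{g}Z)=\rk\Im p_2$; and that in an exact category a pair of endomorphisms of the outer terms lifts to an automorphism of a conflation precisely when the induced pushforward and pullback agree on the extension class. Once these are secured, the scalar central torus does the rest, converting the rank equality directly into the vanishing of $\xi$.
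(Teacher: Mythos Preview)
Your argument is correct. The inequality step is essentially the paper's own: both of you use that a nonzero Euler characteristic of $\Aut(Y)/\Im p_1$ forces $\rk\Im p_1=\rk\Aut(Y)$, and then pass through $p_2$ (with its unipotent kernel) to reach $\gamma(Y)\le\gamma(X)+\gamma(Z)$.

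Where you genuinely diverge is in the equality case. The paper proves $Y\cong X\oplus Z$ by induction on $\gamma(Y)$: it chooses specific torus elements such as $\mathrm{diag}(t,\ldots,t,t^2)$ in $\Aut(Y)$, produces idempotents $s_i\in\End(X)$ via the relation $fs_i=\mathrm{diag}(0,\ldots,1,\ldots,0)f$, peels off an indecomposable summand of $Y$ that must lie in $Z$, and repeats on the shorter conflation. Your route is instead to observe that equality of ranks forces $\Im p_2$ to contain a maximal torus of $\Aut(X)\times\Aut(Z)$, hence the central torus $\{(t\,\id_X,s\,\id_Z)\}$; then the standard criterion $(a_1,a_3)\in\Im p_2\Leftrightarrow(a_1)_*\xi=(a_3)^*\xi$ turns this into $(t-s)\xi=0$, so $\xi=0$. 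This is cleaner and avoids induction entirely, at the cost of invoking two background facts (conjugacy of maximal tori in the connected group $\Aut(X)\times\Aut(Z)$, and the Ext-class description of $\Im p_2$, which uses the five-lemma in exact categories to ensure the resulting $a_2$ is an isomorphism). The paper's hands-on argument, by contrast, is self-contained at the level of idempotent completion and yields as a by-product the finer splitting used immediately afterwards in Lemma~\ref{rem3.11}, where each indecomposable summand of $Y$ is shown to be an extension of summands of $X$ and $Z$; your global $\xi=0$ argument gives the splitting in one stroke but does not directly produce that refinement.
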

\begin{proof}
Recall that $m_{\pi_m}([X\xrightarrow{f}Y\xrightarrow{g}Z])=\chi(\Aut Y/\text{Im}(p_1))$.

If $\rk\Aut(Y)>\rk~\text{Im}(p_1)$, then the fibre of the action of a maximal torus of $\Aut(Y)$ on $\Aut Y/\text{Im}(p_1)$ is $(\mathbb{K}^*)^k$ for some $k\geq1$, it forces $\chi(\Aut Y/\text{Im}(p_1))=0$. Hence we have $\rk\Aut(Y)=\rk~\text{Im}(p_1)\leq\rk\Aut(X)+\rk\Aut(Z)$.

We prove the second assertion by induction. First of all, suppose that $X\ncong0$ and $Z\ncong0$. If $\rk\Aut(Y)=2$ and $Y=Y_1\oplus Y_2$, then $\rk\Aut(X)=\rk\Aut(Z)=1$ since $X$ and $Z$ are not isomorphic to $0$. For $t\in\mathbb{K}^*\setminus\{1\}$, $\left(
\begin{array}{cc}
t&0\\
0&t^2
\end{array}
\right)\in\Aut(Y)$ and it is an element of a maximal torus $\textbf{D}_2(\mathbb{K})$ of $\Aut(Y)$. A maximal torus of $\text{Im}(p_1)$ is also a maximal torus of $\Aut(Y)$ since $\rk\Aut(Y)=\rk~\text{Im}(p_1)$. Because two maximal tori of a connected linear algebraic group are conjugate, there exists $\alpha\in\Aut(Y)$ such that $\alpha\left(
\begin{array}{cc}
t&0\\
0&t^2
\end{array}
\right)\alpha^{-1}$ lies in a maximal torus of $\text{Im}(p_1)$. Hence there exist $a\in\Aut(X)$ and $b\in\Aut(Z)$ satisfying $(a,\alpha\left(
\begin{array}{cc}
t&0\\
0&t^2
\end{array}
\right)\alpha^{-1},b)\in\Aut(X\xrightarrow{f}Y\xrightarrow{g}Z)$, namely
$$
(a,\left(
\begin{array}{cc}
t&0\\
0&t^2
\end{array}
\right),b)\in\Aut(X\xrightarrow{\alpha^{-1}f}Y\xrightarrow{g\alpha}Z).
$$
Let $f^{\prime}=\alpha^{-1}f$ and $g^{\prime}=g\alpha$. Observe $(t,\left(
\begin{array}{cc}
t&0\\
0&t
\end{array}
\right),t)\in\Aut(X\xrightarrow{f^{\prime}}Y\xrightarrow{g^{\prime}}Z)$. Hence $f^{\prime}(a-t)=\left(
\begin{array}{cc}
0&0\\
0&t^2-t
\end{array}
\right)f^{\prime}$. Let $s=\frac{1}{t^2-t}(a-t)\in\End(X)$ ($t\neq0,1$). Then $f^{\prime}s=\left(
\begin{array}{cc}
0&0\\
0&1
\end{array}
\right)f^{\prime}$. Because $f^{\prime}$ is an inflation and
$$
f^{\prime}s^2=
\left(
\begin{array}{cc}
0&0\\
0&1
\end{array}
\right)f^{\prime}s=\left(
\begin{array}{cc}
0&0\\
0&1
\end{array}
\right)f^{\prime}=f^{\prime}s,
$$
$s^2=s$. The category $\A$ is idempotent completion, consequently $s$ has a kernel and an image such that $X=\text{Ker}s\oplus\text{Im}s$. But $X$ is indecomposable, without loss of generality we can assume $X=\text{Ker}s$. Then $s=0$. Let $f^{\prime}= {f_{1}\choose f_{2}}$ and $g^{\prime}=(g_1,g_2)$. It follows that
$$
\left(
\begin{array}{c}
0\\
0
\end{array}
\right)=f^{\prime}s=\left(
\begin{array}{cc}
0&0\\
0&1
\end{array}
\right)\left(
\begin{array}{c}
f_1\\
f_2
\end{array}
\right)=\left(
\begin{array}{c}
0\\
f_2
\end{array}
\right).
$$
We have $f_2=0$ and $f^{\prime}=\left(
\begin{array}{c}
f_1\\
0
\end{array}
\right)$. The morphism $Y_1\oplus Y_2\xrightarrow{(0,1)}Y_2$ is a deflation by \cite[Lemma 2.7]{buhler10}. Because $(0,1){f_1\choose0}=0$, there exits $h\in\Hom(Z,Y_1)$ such that $(0,1)=h(g_1,g_2)$. We have $hg_1=0$ and $hg_2=1_{Y_2}$. Observe $g_2h\in\End(Z)$ and $(g_2h)(g_2h)=g_2h$, so $g_2h$ has a kernel $k:K\rightarrow Z$ and an image $i:I\rightarrow Z$. Moreover $Z\cong K\oplus I$. It follows that $Z\cong K$ or $Z\cong I$ since $Z$ is indecomposable. If $Z\cong K$ then $g_2h=0$. But $hg_2h=h$, $K=0$. Thus $h$ is an isomorphism and $g_1=0$. We have $Z\cong Y_2$. Similarly $X\cong Y_1$. Hence $X\oplus Z\cong Y_1\oplus Y_2$.

Assume that the assertion is true for $\rk\Aut(Y)=n<N$. When $n=N$, we can assume $\rk\Aut(X)=n_1$ where $0<n_1<N$, then $\rk\Aut(Z)=N-n_1=n_2$. Let $Y=Y^{\prime}\oplus Y_N$ and $Y^{\prime}=Y_1\oplus\ldots\oplus Y_{N-1}$, where $Y_i$ are indecomposable. Observe that $\left(
\begin{array}{cc}
tI_{N-1}&0\\
0&t^2
\end{array}
\right)$ lies in a maximal torus of $\Aut(Y)$ for $t\in\mathbb{K}^*\setminus\{1\}$. There exists $(a,c,b)\in\Aut(X\xrightarrow{f}Y\xrightarrow{g}Z)$ such that $c$ and $\left(
\begin{array}{cc}
tI_{N-1}&0\\
0&t^2
\end{array}
\right)$ are conjugate in $\Aut(Y)$. For simplicity we assume $c=
\left(
\begin{array}{cc}
tI_{N-1}&0\\
0&t^2
\end{array}
\right)$. So we have the following commutative diagram
\begin{equation*}
\xymatrix{
  X \ar[d]_{a} \ar[r]^{f} & Y^{\prime}\oplus Y_N \ar[d]_{c} \ar[r]^{g} & Z \ar[d]^{b} \\
  X \ar[r]^{f} & Y^{\prime}\oplus Y_N \ar[r]^{g} & Z }
\end{equation*}
where $f=(f_1,f_2,\ldots,f_N)^t$ and $g=(g_1,g_2,\ldots,g_N)$.

There is another commutative diagram
\begin{equation*}
\xymatrix{
  X \ar[d]_{tI_{n_1}} \ar[r]^{(f^*,f_N)^t~} & Y^{\prime}\oplus Y_N \ar[d]_{tI_N} \ar[r]^{~~(g^*,g_N)} & Z \ar[d]^{tI_{n_2}} \\
  X \ar[r]^{(f^*,f_N)^t~} & Y^{\prime}\oplus Y_N \ar[r]^{~~(g^*,g_N)} & Z }
\end{equation*}
where $f^*=(f_1,f_2,\ldots,f_{N-1})^T$ and $g^*=(g_1,g_2,\ldots,g_{N-1})$. Then $f=(f^*,f_N)^t$, $g=(g^*,g_N)$ and $f(a-tI_{n_1}) =\left(
\begin{array}{cc}
0I_{N-1}&0\\
0&t^2-t
\end{array}
\right)f$.
Let
$$
s_{N}=\frac{1}{t^2-t}(a-tI_{n_1}).
$$
Then $fs_{N}=\text{diag}\{0,\ldots,0,1\}f$. It follows $f^*s_N=0$, $f_Ns_N=f_N$ and $g_Nf_N=g\left(
\begin{array}{cc}
0I_{N-1}&0\\
0&1
\end{array}
\right)f=gfs_N=0$. Moreover $s_{N}$ is an idempotent, we know that $X=\text{Ker}s_N\oplus\text{Im}s_N$. If $f_N\neq0$ then $\text{Im}s_N$ is not isomorphic to $0$. Similarly we can define $s_1$, $s_2$, $\ldots$, $s_{N-1}\in\End(X)$ with the property that $fs_i=\text{diag}\{0,\ldots,0,1,0,\ldots,0\}f=(0,\ldots,0,f_i,0,\ldots,0)^t$. Hence $s_i$ is idempotent and if $f_i\neq0$ then $\text{Im}s_i$ is not isomorphic to $0$ for each $i$. Note that $s_1+s_2+\ldots+s_N=1_X\in\Aut(X)$, it follows
$$X=\text{Im}s_1\oplus\ldots\oplus\text{Im}s_N.$$
Hence $f_i=0$ for some $i$ since $\rk\Aut(X)<N$. Without loss of generality, we assume $f_N=0$. Let $(0,\ldots,0,1):Y_1\oplus\ldots\oplus Y_N\rightarrow Y_N$, then
$$(0,\ldots,0,1)(f_1,\ldots,f_N)^t =0$$
Hence there exists $h\in\Hom(Z,Y_N)$ such that $h(g_1,\ldots,g_N)=(0,\ldots,0,1)$, namely $hg_1=0,\ldots,hg_{N-1}=0$ and $hg_N=1$. Therefore $Y_N$ is isomorphic to a direct summand of $Z$. Assume that $Z=Z^{\prime}\oplus Y_N$ where $\gamma(Z^{\prime})=\gamma(Z)-1$. The morphism $(1,0):Z^{\prime}\oplus Y_N\rightarrow Z^{\prime}$ is a deflation, so $g^\prime=g^*(1,0):Y^\prime\rightarrow Z^\prime$ is a deflation by Definition \ref{def a1}. Obviously, $(f_1,\ldots,f_{N-1})^t:X\rightarrow Y_1\oplus\ldots\oplus Y_{N-1}$ is a kernel of $g^\prime$. Thus
$$
X\xrightarrow{(f_1,\ldots,f_{N-1})^t}Y_1\oplus\ldots\oplus Y_{N-1}\xrightarrow{g^\prime}Z^{\prime}
$$
is a conflation. By hypothesis, $Y_1\oplus\ldots\oplus Y_{N-1}\cong X\oplus Z^{\prime}$. Hence $Y=Y_1\oplus\ldots\oplus Y_{N}\cong X\oplus Z$. The proof is completed.
\end{proof}

\begin{rem}\label{rem3.10}
If $1_{[X]}*1_{[Z]}([Y])\neq0$, then $\gamma(Y)\leq\gamma(X)+\gamma(Z)$, where the equality holds if and only if $Y\cong X\oplus Z$.
\end{rem}

\begin{lem}\label{rem3.11}
Let $X,Y,Z\in\Obj(\A)$ and $X\xrightarrow{f}Y\xrightarrow{g}Z$ be a conflation in $\A$. If $m_{\pi_m}([X\xrightarrow{f}Y\xrightarrow{g}Z])\neq0$, $\gamma(Y)<\gamma(X)+\gamma(Z)$ and $Y=Y_1\oplus Y_2$, then there exist two conflations $X_1\xrightarrow{f_1}Y_1\xrightarrow{g_1}Z_1$ and $X_2\xrightarrow{f_2}Y_2\xrightarrow{g_2}Z_2$ in $\A$ such that $X\cong X_1\oplus X_2$, $Z\cong Z_1\oplus Z_2$ and $f=\text{diag}\{f_1,f_2\},g=\text{diag}\{g_1,g_2\}$.
\end{lem}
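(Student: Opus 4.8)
The plan is to adapt the maximal--torus argument used in the proof of Lemma~\ref{lem2}. The hypothesis $m_{\pi_m}([X\xrightarrow{f}Y\xrightarrow{g}Z])\neq0$ means $\chi(\Aut(Y)/\text{Im}(p_1))\neq0$, which by the first part of the proof of Lemma~\ref{lem2} forces $\rk\Aut(Y)=\rk\,\text{Im}(p_1)$; hence any maximal torus $T'$ of $\text{Im}(p_1)$ is already a maximal torus of $\Aut(Y)$. The group $\Aut(Y)$ is connected, being an extension of $\prod_i\GL(n_i,\mathbb{K})$ by the connected unipotent group $1+\text{rad}\,\End(Y)$, so all its maximal tori are conjugate. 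For $t\in\mathbb{K}^*\setminus\{1\}$ the semisimple element $c_t=\text{diag}\{t\,\id_{Y_1},t^2\id_{Y_2}\}$ lies in some maximal torus $T$ of $\Aut(Y)$, so there is $\alpha\in\Aut(Y)$ with $\alpha c_t\alpha^{-1}\in T'\subseteq\text{Im}(p_1)$. Replacing the conflation by the equivalent one $X\xrightarrow{\alpha^{-1}f}Y\xrightarrow{g\alpha}Z$, I may assume outright that $c_t\in\text{Im}(p_1)$, i.e.\ that there are $a\in\Aut(X)$, $b\in\Aut(Z)$ with $(a,c_t,b)\in\Aut(X\xrightarrow{f}Y\xrightarrow{g}Z)$. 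Here the strict inequality $\gamma(Y)<\gamma(X)+\gamma(Z)$ records, via Lemma~\ref{lem2}, that $Y\not\cong X\oplus Z$, placing us outside the split case already settled there.

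Next I would extract idempotents exactly as in Lemma~\ref{lem2}. Subtracting the scalar automorphism $(t\,\id_X,t\,\id_Y,t\,\id_Z)$ from $(a,c_t,b)$ and writing $f=(f_1,f_2)^t$ with $f_i:X\to Y_i$, the relation $f(a-t\,\id_X)=(c_t-t\,\id_Y)f$ gives $f_1(a-t\,\id_X)=0$ and $f_2(a-t\,\id_X)=(t^2-t)f_2$. Setting $s=\tfrac{1}{t^2-t}(a-t\,\id_X)\in\End(X)$ yields $f_1s=0$ and $f_2s=f_2$, hence $fs=\text{diag}\{0,\id_{Y_2}\}f$; since $f$ is an inflation this forces $s^2=s$. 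Dually, writing $g=(g_1,g_2)$ with $g_i:Y_i\to Z$, from $bg=gc_t$ I obtain $r=\tfrac1{t^2-t}(b-t\,\id_Z)\in\End(Z)$ with $rg_1=0$ and $rg_2=g_2$, so $rg=g\,\text{diag}\{0,\id_{Y_2}\}$, and because $g$ is a deflation right-cancellation gives $r^2=r$.

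Because $\A$ is idempotent complete, $s$ and $r$ split: $X=X_1\oplus X_2$ with $X_1=\text{Ker}\,s$, $X_2=\text{Im}\,s$, and $Z=Z_1\oplus Z_2$ with $Z_1=\text{Ker}\,r$, $Z_2=\text{Im}\,r$. The identities $f_1s=0$ and $f_2s=f_2$ show that $f_1$ vanishes on $X_2$ and $f_2$ vanishes on $X_1$, so $f=\text{diag}\{f_1,f_2\}$ with $f_1:X_1\to Y_1$ and $f_2:X_2\to Y_2$; symmetrically $rg_1=0$ and $rg_2=g_2$ give $\text{Im}\,g_1\subseteq Z_1$ and $\text{Im}\,g_2\subseteq Z_2$, so $g=\text{diag}\{g_1,g_2\}$ with $g_i:Y_i\to Z_i$. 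Thus $X\xrightarrow{f}Y\xrightarrow{g}Z$ is literally the direct sum of the two composable sequences $X_i\xrightarrow{f_i}Y_i\xrightarrow{g_i}Z_i$, and one checks directly that $(1-s,\text{diag}\{\id_{Y_1},0\},1-r)$ is an idempotent endomorphism of the conflation whose complement is $(s,\text{diag}\{0,\id_{Y_2}\},r)$.

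The step I expect to be the crux is passing from block--diagonal maps to genuine conflations: I must argue that each $X_i\xrightarrow{f_i}Y_i\xrightarrow{g_i}Z_i$ is itself a conflation rather than merely a complex. This is where idempotent completeness is decisive once more, since a direct summand (equivalently, a retract in the category of composable pairs) of a conflation is again a conflation in an idempotent complete exact category (cf.\ \cite{buhler10}); applying this to the two retracts of $X\xrightarrow{f}Y\xrightarrow{g}Z$ cut out by the idempotent endomorphism above yields the two conflations. Combined with $X\cong X_1\oplus X_2$, $Z\cong Z_1\oplus Z_2$ and the block--diagonal form $f=\text{diag}\{f_1,f_2\}$, $g=\text{diag}\{g_1,g_2\}$, this is precisely the assertion. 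A secondary point requiring care is the conjugacy reduction of the first paragraph: one should confirm that $c_t$ indeed lies in a maximal torus (it is semisimple and centralizes the standard diagonal torus attached to the indecomposable decomposition of $Y$) so that the conjugation placing it inside $\text{Im}(p_1)$ is legitimate.
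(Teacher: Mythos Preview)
Your proof is correct and rests on the same mechanism as the paper's: the hypothesis $m_{\pi_m}\neq0$ forces $\rk\Aut(Y)=\rk\,\text{Im}(p_1)$, so a two--eigenvalue diagonal element of $\Aut(Y)$ can be conjugated into $\text{Im}(p_1)$, and the resulting commuting triple produces splitting idempotents on $X$ and $Z$.

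The execution differs in two respects. First, the paper refines $Y$ into indecomposables $Y_1\oplus\cdots\oplus Y_N$, uses $c=\text{diag}\{tI_{N-1},t^2\}$ to peel off a single indecomposable $Y_N$, and then invokes induction on $N$; you instead work directly with the given two--block decomposition $Y=Y_1\oplus Y_2$ and the element $c_t=\text{diag}\{t\,\id_{Y_1},t^2\id_{Y_2}\}$, obtaining the full splitting in one step. Second, for the passage from block--diagonal maps to genuine conflations, the paper verifies by hand (via the universal property of the cokernel $g$) that $\text{Im}\,s_N\to Y_N\to\text{Im}\,r_N$ is a conflation, whereas you cite the general fact that direct summands of conflations are conflations in an idempotent complete exact category. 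Your route is tidier and avoids the induction; the paper's argument, once iterated, yields the slightly sharper statement that every \emph{indecomposable} summand of $Y$ sits in its own sub--conflation, which is how the lemma is actually invoked in Proposition~\ref{prop3} and Theorem~\ref{Greenthm}. Finally, note that neither argument genuinely uses the strict inequality $\gamma(Y)<\gamma(X)+\gamma(Z)$; it only records that the split case of Lemma~\ref{lem2} is excluded.
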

\begin{proof}
Suppose that $\rk\Aut(X)=n_1$, $\rk\Aut(X)=N$ and $\rk\Aut(Z)=n_2$. Then $N<n_1+n_2$. For simplicity, we use the notation as above. Let $Y=Y_1\oplus\ldots\oplus Y_{N}$, $f=(f_1,f_2,\ldots,f_N)^t$, $g=(g_1,g_2,\ldots,g_N)$ and the isomorphisms $(a,c,b),(tI_{n_1},tI_{N},tI_{n_2})\in\Aut(X\xrightarrow{f}Y\xrightarrow{g}Z)$, where $c=\left(
\begin{array}{cc}
tI_{N-1}&0\\
0&t^2
\end{array}
\right)$. Recall that
$$
s_N=\frac{1}{t^2-t}(a-tI_{n_1})\in\End(X)
$$
is an idempotent such that
$$
fs_{N}=(0,\ldots,0,f_N)^t
$$
and $X=\text{Ker}s_N\oplus\text{Im}s_N$. Similarly, there exists an idempotent
$$
r_N=\frac{1}{t-t^2}(b-tI_{n_2})
$$
in $\End(Z)$ such that $r_{N}g=(0,\ldots,0,g_N)$ and $Z=\text{Ker}r_N\oplus\text{Im}r_N$. Without loss of generality, we assume that $f_N\neq0$ and $g_N\neq0$. Because $f_Ns_N=f_N$ and $r_Ng_N=g_N$,
$$
g_Nf_N=r_Ng_Nf_Ns_N=r_N(g_1,\ldots,g_N)(f_1,\ldots,f_N)^ts_N=0.
$$
It is clear that $i:\text{Ker}s_N\hookrightarrow X$ is a kernel of $f_N:X\rightarrow Y_N$. There exists a morphism $f_{N}^\prime:\text{Im}s_N\rightarrow Y_N$ which is an image of $f_N$ since $X=\text{Ker}s_N\oplus\text{Im}s_N$. Similarly we can find a morphism $g_{N}^\prime:Y_N\rightarrow\text{Im}r_N$ which is a coimage of $g_N$ such that $g_N=jg_{N}^\prime$, where $j:\text{Im}(r_N)\hookrightarrow Z$ is an image of $g_N$. It is easy to check that $f_{N}^\prime$ is an inflation, $g_{N}^\prime$ a deflation and $g_{N}^\prime f_{N}^\prime=0$. Let $h:Y_N\rightarrow A$ be a morphism in $\A$ such that $hf_{N}^\prime=0$. The morphism
$$
(0,\ldots,0,h):Y_1\oplus\ldots\oplus Y_N\rightarrow A
$$
satisfies $(0,\ldots,0,h)f=0$. There exists $k\in\Hom_{\A}(Z,A)$ such that
$$
(0,\ldots,0,h)=kg
$$
since $g$ is a cokernel of $f$. It follows that $h=kg_N=kjg_{N}^\prime$. Hence $g_{N}^\prime$ is a cokernel of $f_{N}^\prime$. Therefore $\text{Im}s_N\xrightarrow{f_{N}^\prime}Y_N\xrightarrow{g_{N}^\prime}\text{Im}r_N$ is a conflation. By induction, every indecomposable direct summand of $Y$ is extended by the direct summands of $X$ and $Z$. The proof is finished.
\end{proof}

\begin{lem}\label{lem3}
Let $\mathcal{O}_1$ and $\mathcal{O}_2$ be two indecomposable constructible subsets of $\fObj_\A(\K)$. Let $A\in\Obj(\A)$ and $\gamma(A)\geq2$. If $[A]\notin\mathcal{O}_1\oplus \mathcal{O}_2$, then $1_{\mathcal{O}_{1}}*1_{\mathcal{O}_{2}}([A])=0$.
\end{lem}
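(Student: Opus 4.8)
The plan is to argue by contradiction and reduce from the constructible sets to single objects using Lemma \ref{lem1}, then extract a contradiction from the rank inequality of Remark \ref{rem3.10}. So first I would assume $1_{\mathcal{O}_1}\ast 1_{\mathcal{O}_2}([A])\neq 0$. Since this convolution value is nonzero, Lemma \ref{lem1} supplies objects $X,Z\in\Obj(\A)$ with $[X]\in\mathcal{O}_1$, $[Z]\in\mathcal{O}_2$, and $1_{[X]}\ast 1_{[Z]}([A])\neq 0$. This is the step that lets me work with honest objects rather than arbitrary constructible support.

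The key observation is then that $\mathcal{O}_1$ and $\mathcal{O}_2$ are \emph{indecomposable} constructible sets, so by definition $X$ and $Z$ are indecomposable and nonzero; hence $\gamma(X)=\gamma(Z)=1$ and $\gamma(X)+\gamma(Z)=2$. Applying Remark \ref{rem3.10} to $1_{[X]}\ast 1_{[Z]}([A])\neq 0$ gives $\gamma(A)\le\gamma(X)+\gamma(Z)=2$. Combined with the standing hypothesis $\gamma(A)\ge 2$, this pins $\gamma(A)$ down to exactly $2$, so that $\gamma(A)=\gamma(X)+\gamma(Z)$ and we are precisely in the equality case. The equality clause of Remark \ref{rem3.10} then forces $A\cong X\oplus Z$.

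To finish, I would observe that $A\cong X\oplus Z$ with $[X]\in\mathcal{O}_1$ and $[Z]\in\mathcal{O}_2$ means exactly $[A]=[X\oplus Z]\in\mathcal{O}_1\oplus\mathcal{O}_2$, contradicting the hypothesis $[A]\notin\mathcal{O}_1\oplus\mathcal{O}_2$; therefore $1_{\mathcal{O}_1}\ast 1_{\mathcal{O}_2}([A])=0$. The argument is short and modular, and I do not expect a genuine obstacle once Lemma \ref{lem1} and Remark \ref{rem3.10} are in hand. The one point worth flagging is conceptual rather than technical: it is the indecomposability of $\mathcal{O}_1$ and $\mathcal{O}_2$ that nails $\gamma(X)+\gamma(Z)$ to the value $2$, which is what makes the assumption $\gamma(A)\ge 2$ land exactly in the equality regime of Remark \ref{rem3.10}. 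All the delicate work — the maximal-torus action and the idempotent splitting carried out in Lemma \ref{lem2} — has already been absorbed into that equality statement, so the present lemma only needs to invoke it.
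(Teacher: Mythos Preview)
Your proof is correct and follows essentially the same argument as the paper: assume nonvanishing, apply Lemma~\ref{lem1} to pass to single objects, then use the equality case of the rank inequality (the paper cites Lemma~\ref{lem2} directly where you cite Remark~\ref{rem3.10}, but these are the same statement) to force $A\cong X\oplus Z$ and derive the contradiction. The only difference is cosmetic---you spell out explicitly why $\gamma(X)=\gamma(Z)=1$ and why the hypothesis $\gamma(A)\ge 2$ lands you in the equality regime, which the paper leaves implicit.
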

\begin{proof}
If $1_{\mathcal{O}_{1}}*1_{\mathcal{O}_{2}}([A])\neq0$, then there exist $X$, $Y\in \Obj(\mathcal{A})$ such that $[X]\in\mathcal{O}_{1}$, $[Y]\in\mathcal{O}_{2}$ and $1_{[X]}*1_{[Y]}(A)\neq0$ by Lemma~\ref{lem1}. It follows that $\gamma(A)=2$ and $A\cong X\oplus Y$ by Lemma~\ref{lem2} (also see \cite[Theorem~4.9]{joyceA2}). This leads to a contradiction.
\end{proof}

\begin{cor}
Let $\mathcal{O}_{1}$ and $\mathcal{O}_{2}$ be indecomposable subsets of $\fObj_{\A}(\mathbb{K})$. If $\mathcal{O}_{1}\cap\mathcal{O}_{2}=\emptyset$, then
$$
1_{\mathcal{O}_{1}}*1_{\mathcal{O}_{2}}=1_{\mathcal{O}_{1}\oplus\mathcal{O}_{2}}+\sum\limits_{i=1}^{m}a_{i}1_{\mathcal{P}_{i}}
$$
where $\mathcal{P}_{i}$ are indecomposable constructible subsets and $a_i=1_{\mathcal{O}_{1}}*1_{\mathcal{O}_{2}}([X])$ for $[X]\in\mathcal{P}_{i}$.
\end{cor}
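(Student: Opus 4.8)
The plan is to analyze the constructible function $f:=1_{\mathcal{O}_1}*1_{\mathcal{O}_2}$ by splitting its support according to the number of indecomposable summands. First I would record that, since $\mathcal{O}_1,\mathcal{O}_2$ are indecomposable, every $[X]\in\mathcal{O}_1$ and $[Z]\in\mathcal{O}_2$ has $\gamma(X)=\gamma(Z)=1$. Hence, whenever $f([Y])\neq0$, Lemma~\ref{lem1} produces $[X]\in\mathcal{O}_1$ and $[Z]\in\mathcal{O}_2$ with $1_{[X]}*1_{[Z]}([Y])\neq0$, and Lemma~\ref{lem2} (equivalently Remark~\ref{rem3.10}) forces $\gamma(Y)\le\gamma(X)+\gamma(Z)=2$, with equality only if $Y\cong X\op Z$. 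As $Y=0$ is impossible (it would make $X,Z$ zero), the support of $f$ meets only the loci $\gamma(Y)=1$ and $\gamma(Y)=2$. By Lemma~\ref{lem3}, on the locus $\gamma\ge2$ the support lies inside $\mathcal{O}_1\op\mathcal{O}_2$; conversely every $[Y]\in\mathcal{O}_1\op\mathcal{O}_2$ has $\gamma(Y)=2$, since disjointness of $\mathcal{O}_1,\mathcal{O}_2$ gives $X\not\cong Z$.

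The crucial step is to show $f([Y])=1$ for every $[Y]=[X\op Z]\in\mathcal{O}_1\op\mathcal{O}_2$, so that $f$ restricts to $1_{\mathcal{O}_1\op\mathcal{O}_2}$ on the locus $\gamma\ge2$. Here I would use the torus-action idea described in the introduction. Identifying $V(\mathcal{O}_1,\mathcal{O}_2;X\op Z)$ with the constructible set of admissible subobjects $U\subseteq X\op Z$ satisfying $[U]\in\mathcal{O}_1$ and $[(X\op Z)/U]\in\mathcal{O}_2$, the maximal torus $\K^*\hookrightarrow\Aut(X\op Z)$ acting by $\mathrm{diag}(1_X,t\cdot 1_Z)$ preserves this set. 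Its complement of fixed points is a union of one-dimensional $\K^*$-orbits, each of Euler characteristic $0$, so $\chi^{\na}\big(V(\mathcal{O}_1,\mathcal{O}_2;X\op Z)\big)$ equals $\chi^{\na}$ of the fixed locus. A fixed $U$ is graded, $U=U_X\op U_Z$ with $U_X\subseteq X$ and $U_Z\subseteq Z$; indecomposability of $[U]\in\mathcal{O}_1$ kills one summand, and then the constraints $U\cong X$ and $(X\op Z)/U\cong Z$, together with $X\not\cong Z$ and the disjointness of $\mathcal{O}_1,\mathcal{O}_2$ (which rules out $U\subseteq Z$), leave only $U=X\op 0$. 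Thus the fixed locus is the single point represented by the split conflation, and $\chi^{\na}=1$.

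It then remains to treat the indecomposable part. Since $f$ is constructible it takes finitely many nonzero values; let $a_1,\dots,a_m$ be those occurring on the support outside $\mathcal{O}_1\op\mathcal{O}_2$, and set
$$
\mathcal{P}_i=f^{-1}(a_i)\setminus(\mathcal{O}_1\op\mathcal{O}_2).
$$
Each $\mathcal{P}_i$ is constructible, because $f^{-1}(a_i)$ is constructible and $\mathcal{O}_1\op\mathcal{O}_2$ is a constructible set, and it is indecomposable, because by the first paragraph every point of the support lying off $\mathcal{O}_1\op\mathcal{O}_2$ satisfies $\gamma=1$. The $\mathcal{P}_i$ are disjoint from $\mathcal{O}_1\op\mathcal{O}_2$, and the set difference also correctly isolates the indecomposable part even when the value $1$ happens to recur there. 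Assembling the two contributions gives
$$
1_{\mathcal{O}_1}*1_{\mathcal{O}_2}=1_{\mathcal{O}_1\op\mathcal{O}_2}+\sum_{i=1}^{m}a_i1_{\mathcal{P}_i},
$$
with $a_i=1_{\mathcal{O}_1}*1_{\mathcal{O}_2}([X])$ for $[X]\in\mathcal{P}_i$.

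The main obstacle I expect is the Euler-characteristic computation $\chi^{\na}(V(\mathcal{O}_1,\mathcal{O}_2;X\op Z))=1$. One must first justify rigorously that $V(\mathcal{O}_1,\mathcal{O}_2;X\op Z)$ carries the $\K^*$-action in a way compatible with the naive Euler characteristic, so that $\chi^{\na}$ reduces to the fixed locus, and then carry out the Krull--Schmidt analysis of graded subobjects inside the exact category $\A$, using idempotent completeness in the same spirit as Lemma~\ref{lem2} and Lemma~\ref{rem3.11} to pin down the unique fixed subobject. Once that value is secured, the remaining bookkeeping with constructible supports is routine.
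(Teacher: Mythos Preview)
Your proposal is correct, but it takes a different route to the key computation $1_{\mathcal{O}_1}*1_{\mathcal{O}_2}([M\oplus N])=1$ than the paper does. The paper argues directly on the level of isomorphism classes of conflations: by (the proof of) Lemma~\ref{lem2}, any conflation $A\to M\oplus N\to B$ with $[A]\in\mathcal{O}_1$, $[B]\in\mathcal{O}_2$ and $m_{\pi_m}\neq 0$ is isomorphic to the split conflation $M\xrightarrow{(1,0)^t}M\oplus N\xrightarrow{(0,1)}N$, and one checks $m_{\pi_m}$ of that split conflation equals $1$; hence the sum $\sum_c c\,\chi^{\na}(Q_c)$ defining the convolution collapses to $1\cdot 1$. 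Your alternative works instead with the equivalence-class set $V(\mathcal{O}_1,\mathcal{O}_2;X\oplus Z)$ and a $\K^*$-action $\mathrm{diag}(1_X,t\cdot 1_Z)$, reducing $\chi^{\na}$ to the count of fixed graded subobjects. This is entirely in the spirit of the paper's torus methods and yields the same answer, but---as you yourself flag---it requires you to justify that the naive Euler characteristic of $V$ localises to the fixed locus and to carry out the idempotent-completeness analysis of graded subobjects, work that the paper's one-line $m_{\pi_m}$ computation bypasses. The remaining bookkeeping, namely setting $\mathcal{P}_i=(1_{\mathcal{O}_1}*1_{\mathcal{O}_2})^{-1}(a_i)\setminus(\mathcal{O}_1\oplus\mathcal{O}_2)$ and invoking Lemma~\ref{lem3} for indecomposability, is identical in both arguments.
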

\begin{proof}
Let $[M]\in\mathcal{O}_{1}$ and $[N]\in\mathcal{O}_{2}$. Then $M$ is not isomorphic to $N$ since $\mathcal{O}_{1}\cap\mathcal{O}_{2}= \emptyset$. Using the fact that
$m_{\pi_{m}}([M\xrightarrow{(1,0)^t}M\oplus N\xrightarrow{(0,1)}N])=1$, we obtain
$$
1_{\mathcal{O}_{1}}*1_{\mathcal{O}_{2}}([M\oplus N])
$$
$$
=m_{\pi_{m}}([M\xrightarrow{(1,0)^t}M\oplus N\xrightarrow{(0,1)}N])\cdot \chi^{\na}([M\xrightarrow{(1,0)^t}M\oplus N\xrightarrow{(0,1)}N])=1.
$$
By Lemma~\ref{lem3}, we know that if $1_{\mathcal{O}_{1}}*1_{\mathcal{O}_{2}}([X]) \neq0$ and $[X]\notin\mathcal{O}_{1}\oplus\mathcal{O}_{2}$, then $X$ is an indecomposable object. Note that
$$
\big(1_{\mathcal{O}_{1}}*1_{\mathcal{O}_{2}}(\fObj_{\A}(\mathbb{K})\setminus\mathcal{O}_{1}\oplus\mathcal{O}_{2})\big) \setminus\{0\}=\{a_1,a_2,\ldots,a_m\}.
$$
Then $\mathcal{P}_{i}= (1_{\mathcal{O}_{1}}*1_{\mathcal{O}_{2}})^{-1}(a_i)\setminus\mathcal{O}_{1}\oplus\mathcal{O}_{2}$ for $1\leq i\leq m$. We complete the proof.
\end{proof}

Using Lemma~\ref{lem2} and Lemma~\ref{rem3.11}, it is easy to see the following corollary:
\begin{cor}\label{cor1}
Let $\mathcal{O}_{1}$ and $\mathcal{O}_{2}$ be two constructible sets. There exist finitely many constructible sets $\mathcal{Q}_1,\mathcal{Q}_2,\ldots,\mathcal{Q}_n$ such that
$$
1_{\mathcal{O}_{1}}*1_{\mathcal{O}_{2}}=\sum\limits_{i=1}^{n}a_i1_{\mathcal{Q}_{i}}
$$
where $\gamma(\mathcal{Q}_{i})\leq \gamma(\mathcal{O}_{1})+ \gamma(\mathcal{O}_{2})$ and $a_i=(1_{\mathcal{O}_{1}}*1_{\mathcal{O}_{2}})([X])$ for any $[X]\in\mathcal{Q}_{i}$.
\end{cor}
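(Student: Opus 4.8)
The plan is to split the statement into two nearly independent parts: the existence of the decomposition, which is essentially formal, and the degree bound, which is where the preceding lemmas do the work. First I would note that $f := 1_{\mathcal{O}_1}*1_{\mathcal{O}_2}$ lies in $\CF(\fObj_\A)$, since $1_{\mathcal{O}_1},1_{\mathcal{O}_2}\in\CF(\fObj_\A)$ and $\CF(\fObj_\A)$ is an associative algebra under $*$. In particular $f$ is a constructible function, so it assumes only finitely many nonzero values, say $a_1,\ldots,a_n$. Setting $\mathcal{Q}_i=f^{-1}(a_i)$, each $\mathcal{Q}_i$ is a constructible subset of $\fObj_\A(\mathbb{K})$ by the very definition of a constructible function, the $\mathcal{Q}_i$ are pairwise disjoint, and $f([X])=a_i$ for every $[X]\in\mathcal{Q}_i$. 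Hence $f=\sum_{i=1}^n a_i 1_{\mathcal{Q}_i}$ with $a_i=(1_{\mathcal{O}_1}*1_{\mathcal{O}_2})([X])$ for any $[X]\in\mathcal{Q}_i$, which is the asserted decomposition with no further input.

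The genuine content is the inequality $\gamma(\mathcal{Q}_i)\le\gamma(\mathcal{O}_1)+\gamma(\mathcal{O}_2)$, and I would prove the pointwise statement that $f([Y])\ne 0$ forces $\gamma(Y)\le\gamma(\mathcal{O}_1)+\gamma(\mathcal{O}_2)$. Fix $[Y]$ with $f([Y])\ne 0$. By Lemma~\ref{lem1} there exist $X,Z\in\Obj(\A)$ with $[X]\in\mathcal{O}_1$, $[Z]\in\mathcal{O}_2$ and $1_{[X]}*1_{[Z]}([Y])\ne 0$. Then Remark~\ref{rem3.10} (equivalently Lemma~\ref{lem2}) yields $\gamma(Y)\le\gamma(X)+\gamma(Z)$. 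Since $[X]\in\mathcal{O}_1$ and $[Z]\in\mathcal{O}_2$, the definition $\gamma(\mathcal{O})=\max\{\gamma(W)\mid[W]\in\mathcal{O}\}$ gives $\gamma(X)\le\gamma(\mathcal{O}_1)$ and $\gamma(Z)\le\gamma(\mathcal{O}_2)$, whence $\gamma(Y)\le\gamma(\mathcal{O}_1)+\gamma(\mathcal{O}_2)$. Applying this to every $[Y]\in\mathcal{Q}_i$ gives the required bound on $\gamma(\mathcal{Q}_i)$, and the proof is complete.

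I expect the main obstacle to be implicit rather than explicit: one must be sure that $\gamma(\mathcal{O}_1)$ and $\gamma(\mathcal{O}_2)$ are finite, i.e.\ that $\gamma$ is bounded on a constructible set. This holds because a constructible set is a finite union of finite-type substacks, on each of which the rank of the maximal torus of the automorphism group is bounded by the (bounded) dimension of the stabilizers. As for the two lemmas named in the statement, Lemma~\ref{lem2} is exactly what supplies the inequality $\gamma(Y)\le\gamma(X)+\gamma(Z)$ above; Lemma~\ref{rem3.11} is not strictly needed for the numerical bound, but it describes, when the inequality is strict and $Y$ decomposes, how the conflation $X\to Y\to Z$ splits compatibly with a decomposition of $Y$. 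This finer structural information is what will be needed later to recognize the $\mathcal{Q}_i$ as constructible sets of stratified Krull--Schmidt type when $\mathcal{O}_1$ and $\mathcal{O}_2$ are; for the present corollary, however, the combination of Lemma~\ref{lem1} and Lemma~\ref{lem2} already suffices.
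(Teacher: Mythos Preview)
Your proof is correct and follows essentially the same line the paper has in mind: the decomposition is forced by $1_{\mathcal{O}_1}*1_{\mathcal{O}_2}$ being a constructible function, and the bound $\gamma(\mathcal{Q}_i)\le\gamma(\mathcal{O}_1)+\gamma(\mathcal{O}_2)$ follows by combining Lemma~\ref{lem1} with Lemma~\ref{lem2} (equivalently Remark~\ref{rem3.10}). Your observation that Lemma~\ref{rem3.11} is not needed for the bare numerical statement is accurate; the paper cites it alongside Lemma~\ref{lem2} because its structural content (that a conflation into a decomposable $Y$ splits compatibly) is what underlies the subsequent refinement in Proposition~\ref{prop3}, but for Corollary~\ref{cor1} as stated your use of Lemma~\ref{lem1} plus Lemma~\ref{lem2} already suffices.
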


For indecomposable constructible sets $\mathcal{O}_{1},\ldots ,\mathcal{O}_{k}$ and $X\in \Obj(\mathcal{A})$, $1_{\mathcal{O}_{1}}*1_{\mathcal{O}_{2}}*\ldots*1_{\mathcal{O}_{k}}([X])\neq0$ implies that $\gamma(X)\leq k$. In particular, $\gamma(X)=k$ means $X=X_{1}\oplus\ldots\oplus X_{k}$ with $[X_{i}]\in \mathcal{O}_{i}$ for $1\leq i\leq k$.

Let $X_{1},\ldots,X_{m}\in \Obj(\A)$ and there be $r$ isomorphic classes, we can assume that $X_{1},\ldots,X_{m_{1}}$ are isomorphic, $X_{m_{1}+1},\ldots,X_{m_{2}}$ are isomorphic, $\ldots$, and $X_{m_{r-1}+1},\ldots,X_{m_{r}}$ are isomorphic, where $m_{1}+\ldots+m_{r}=m$. By \cite{joyceA2}, we have
\begin{equation}\label{equ1}
\Aut(X_{1}\oplus\ldots\oplus X_{m})/\Aut(X_{1})\times\ldots\times\Aut(X_{m})
\cong\mathbb{K}^{l}\times\prod\limits_{i=1}^{r}(\GL(m_{i},\mathbb{K})/(\mathbb{K}^{*})^{m_{i}}),
\end{equation}

\begin{equation}\label{equ2}
\chi(\Aut(X_{1}\oplus X_{2}\oplus\ldots\oplus X_{m})/\Aut(X_{1})\times\ldots\times\Aut(X_{m}))=\prod\limits_{i=1}^{r}m_{i}!.
\end{equation}

\begin{prop}\label{prop2}
Let $\mathcal{O}$ be an indecomposable constructible set. Then
$$
1^{*k}_{\mathcal{O}}=k!1_{k\mathcal{O}}+\sum\limits_{i=1}^tm_{i}1_{\mathcal{P}_{i}}
$$
where $\gamma(\mathcal{P}_{i})<k$ for each $i$ and $m_{i}=1^{*k}_{\mathcal{O}}([X])$ for $[X]\in\mathcal{P}_{i}$.
\end{prop}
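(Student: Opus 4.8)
The plan is to decompose $1_{\mathcal O}^{*k}$ according to the value of $\gamma$, extract the top stratum $\{\gamma=k\}$ as the leading term $k!\,1_{k\mathcal O}$, and collect everything on $\{\gamma<k\}$ into the error terms $\mathcal P_i$. First I would record that $\gamma(\mathcal O)=1$, since $\mathcal O$ is indecomposable. Applying Corollary~\ref{cor1} repeatedly to $1_{\mathcal O}^{*k}=1_{\mathcal O}*\cdots*1_{\mathcal O}$ shows that $1_{\mathcal O}^{*k}$ is a finite $\Q$-linear combination of characteristic functions $1_{\mathcal R}$ of constructible sets $\mathcal R$ with $\gamma(\mathcal R)\le k$; in particular $1_{\mathcal O}^{*k}$ is a constructible function whose support lies in $\{[X]:\gamma(X)\le k\}$. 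I would then isolate the locus where $\gamma(X)=k$.

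Next comes the structural input on the top stratum. By the observation preceding the proposition, if $[X]$ lies in the support of $1_{\mathcal O}^{*k}$ and $\gamma(X)=k$, then $X\cong X_1\oplus\cdots\oplus X_k$ with each $X_i$ indecomposable and $[X_i]\in\mathcal O$, i.e. $[X]\in k\mathcal O$; conversely every $[X]\in k\mathcal O$ has $\gamma(X)=k$. Hence the part of $1_{\mathcal O}^{*k}$ living on $\{\gamma=k\}$ is supported exactly on the Krull--Schmidt set $k\mathcal O$. To compute the value there, fix $[X]\in k\mathcal O$ and write $X\cong m_1Y_1\oplus\cdots\oplus m_rY_r$ with $Y_1,\dots,Y_r$ pairwise non-isomorphic indecomposables, all $[Y_j]\in\mathcal O$, and $m_1+\cdots+m_r=k$. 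Iterating Lemma~\ref{lem2} and Remark~\ref{rem3.10}, in any filtration of $X$ contributing to the $k$-fold convolution the invariant $\gamma$ is additive and forces every extension to split, so the subquotients are precisely the indecomposable summands of $X$, each of class among $[Y_1],\dots,[Y_r]$. Thus, for evaluation at $[X]$, one may replace $1_{\mathcal O}$ by $\sum_{j=1}^r 1_{[Y_j]}$, giving $1_{\mathcal O}^{*k}([X])=\bigl(\sum_j 1_{[Y_j]}\bigr)^{*k}([X])$.

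Expanding this power as a sum over ordered tuples $(j_1,\dots,j_k)$, a term $1_{[Y_{j_1}]}*\cdots*1_{[Y_{j_k}]}([X])$ is nonzero only when the multiset $\{j_1,\dots,j_k\}$ realizes the multiplicities $(m_1,\dots,m_r)$, and then, all extensions being split, its value is $\chi\bigl(\Aut(X)/\bigl(\Aut(X_1)\times\cdots\times\Aut(X_k)\bigr)\bigr)=\prod_{j=1}^r m_j!$ by \eqref{equ2}. Since the number of such tuples is the multinomial coefficient $k!/(m_1!\cdots m_r!)$, the total equals $\frac{k!}{m_1!\cdots m_r!}\cdot\prod_{j=1}^r m_j!=k!$, independently of $X$. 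It then remains to assemble the lower part: set $g=1_{\mathcal O}^{*k}-k!\,1_{k\mathcal O}$. By the previous steps $g$ is a constructible function supported in $\{\gamma<k\}$, hence takes finitely many nonzero values $m_1,\dots,m_t$; the level sets $\mathcal P_i=g^{-1}(m_i)$ are constructible with $\gamma(\mathcal P_i)<k$, and on $\mathcal P_i$ one has $1_{k\mathcal O}=0$, so $m_i=g([X])=1_{\mathcal O}^{*k}([X])$. This yields $1_{\mathcal O}^{*k}=k!\,1_{k\mathcal O}+\sum_{i=1}^t m_i 1_{\mathcal P_i}$.

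The main obstacle is the uniform evaluation $1_{\mathcal O}^{*k}([X])=k!$ on $k\mathcal O$. The delicate points are, first, justifying rigorously the reduction that only indecomposable summands of $X$ occur as subquotients and that every contributing extension splits, which rests on iterating Lemma~\ref{lem2}; and second, the bookkeeping that makes the number of orderings $k!/\prod_j m_j!$ cancel against the automorphism Euler characteristic $\prod_j m_j!$ supplied by \eqref{equ2}, so that the apparent dependence on the multiplicities disappears. One must also confirm that $k\mathcal O$ is genuinely a single constructible set of Krull--Schmidt coinciding with the $\gamma=k$ part of the support, so that $1_{k\mathcal O}\in\CF^{\mathrm{KS}}(\fObj_\A)$.
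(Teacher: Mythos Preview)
Your argument is correct, but it follows a genuinely different route from the paper's. The paper proves the formula by induction on $k$: assuming $1_{\mathcal O}^{*n}=n!\,1_{n\mathcal O}+(\text{lower})$, it writes $1_{\mathcal O}^{*(n+1)}=1_{\mathcal O}^{*n}*1_{\mathcal O}$ and reduces to showing that $(1_{n\mathcal O}*1_{\mathcal O})([X])=n+1$ for every $[X]\in(n+1)\mathcal O$. Writing $X=m_1X_1\oplus\cdots\oplus m_rX_r$, it splits this value according to which $[X_i]$ sits in the last slot, and extracts $1_{[(m_i-1)X_i\oplus\cdots]}*1_{[X_i]}([X])=m_i$ by comparing the leading terms of $1_{[X_1]}^{*(m_1-1)}*\cdots*1_{[X_r]}^{*m_r}$ and $1_{[X_1]}^{*(m_1-1)}*\cdots*1_{[X_r]}^{*m_r}*1_{[X_1]}$ using \eqref{equ2}; summing gives $\sum_i m_i=n+1$.

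You instead evaluate $1_{\mathcal O}^{*k}([X])$ directly for $[X]\in k\mathcal O$: reduce $1_{\mathcal O}$ to $\sum_j 1_{[Y_j]}$, expand multinomially, and observe that each admissible ordered tuple contributes $\prod_j m_j!$ by \eqref{equ2}, while there are $k!/\prod_j m_j!$ such tuples, giving $k!$ at once. This is cleaner and makes the appearance of $k!$ transparent, but it leans on the claim that $1_{[Y_{j_1}]}*\cdots*1_{[Y_{j_k}]}([X])=\prod_j m_j!$ for \emph{every} ordering with the right multiplicities; you correctly flag this as the delicate step. It does follow from the split-extension argument (Lemma~\ref{lem2} iterated) together with \eqref{equ2}, but spelling it out amounts to showing commutativity on the top stratum---something the paper's inductive proof avoids by only ever appending one factor $1_{\mathcal O}$ at a time, so that the needed identity is the more elementary quotient of two instances of \eqref{equ2}. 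Either way the content is the same; your packaging is arguably more illuminating.
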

\begin{proof}
We prove the proposition by induction. When $k=1$, it is easy to see that the formula is true. If $k=2$, then
$$
1^{*2}_{\mathcal{O}}([X\oplus X])=1_{\mathcal{O}}([X])\cdot1_{\mathcal{O}}([X])\cdot\chi(\Aut(X\oplus X)/\Aut(X)\times\Aut(X))=2
$$
for $[X]\in\mathcal{O}$ and
$$
1^{*2}_{\mathcal{O}}([X\oplus Y])=
$$
$$
\big(1_{\mathcal{O}}([X])\cdot1_{\mathcal{O}}([Y])+1_{\mathcal{O}}([Y])\cdot1_{\mathcal{O}}([X])\big)\cdot\chi\big(\Aut(X\oplus Y)/\Aut(X)\times\Aut(Y)\big)=2,
$$
where $[X],[Y]\in \mathcal{O}$ and $X\ncong Y$. If $[X]\notin\mathcal{O}\oplus\mathcal{O}$ and $\gamma(X)\geq2$ then $1^{*2}_{\mathcal{O}}([X])=0$ by Lemma \ref{lem3}. Hence $1^{*2}_{\mathcal{O}}=2\cdot1_{\mathcal{O}\oplus\mathcal{O}}+\sum\limits_{i}m_{i}\mathcal{P}_{i}$ where $\mathcal{P}_{i}$ are indecomposable constructible sets by Corollary \ref{cor1}.

Now we suppose that the formula is true for $k\leq n$. When $k=n+1$, we have
$$
1^{*(n+1)}_{\mathcal{O}}=1^{*(n)}_{\mathcal{O}}*1_{\mathcal{O}}=(n!1_{n\mathcal{O}}+\sum c_{\mathcal{P}^{\prime}}1_{\mathcal{P}^{\prime}})*1_{\mathcal{O}},
$$
where $\mathcal{P}^{\prime}$ are constructible sets with $\gamma(\mathcal{P}^{\prime})<n$. If the formula is true for $k=n+1$, then
$$
n!1_{n\mathcal{O}}*1_{\mathcal{O}}= (n+1)!1_{(n+1)\mathcal{O}}+\sum c_{\mathcal{Q}}1_{\mathcal{Q}},
$$
where $\mathcal{Q}$ are constructible sets with $\gamma(\mathcal{Q})<n+1$.
Hence it suffices to show that the initial term of $1_{n\mathcal{O}}*1_{\mathcal{O}}$ is $(n+1)1_{(n+1)\mathcal{O}}$, namely $(1_{n\mathcal{O}}*1_{\mathcal{O}})([X])=n+1$ for all $[X]\in(n+1)\mathcal{O}$.

Assume that $X=m_1X_1\oplus m_2X_2\oplus\ldots\oplus m_rX_r$, where $X_1,\ldots,X_r\in\text{Obj}(\A)$ which are not isomorphic to each other, $[X_i]\in\mathcal{O}$ for $1\leq i\leq r$, $m_1,\ldots,m_r$ are positive integers and $m_1+m_2+\ldots+m_r=n+1$.

$$
(1_{n\mathcal{O}}*1_{\mathcal{O}})([X])=(1_{[(m_1-1)X_1\oplus m_2X_2\oplus\ldots\oplus m_rX_r]}*1_{[X_1]})([X])
$$
$$
+(1_{[m_1X_1\oplus (m_2-1)X_2\oplus\ldots\oplus m_rX_r]}*1_{[X_2]})([X])
$$
$$
+\ldots
$$
$$
+(1_{[m_1X_1\oplus\ldots\oplus m_{r-1}X_{r-1}\oplus (m_r-1)X_r]}*1_{[X_r]})([X])
$$
Using Equation (\ref{equ2}), it follows that
$$
1_{[X_1]}^{*(m_1-1)}*1_{[X_2]}^{*m_2}*\ldots*1_{[X_r]}^{*m_r}=(m_1-1)!m_2!\ldots m_r!1_{[(m_1-1)X_1\oplus m_2X_2\oplus\ldots\oplus m_rX_r]}+\ldots,
$$
$$
1_{[X_1]}^{*(m_1-1)}*1_{[X_2]}^{*m_2}*\ldots*1_{[X_r]}^{*m_r}*1_{[X_1]}=(\prod\limits_{i=1}^{r}m_i!)1_{[m_1X_1\oplus m_2X_2\oplus\ldots\oplus m_rX_r]}+\ldots
$$
Compare the initial monomials of the two equations, it follows that
$$
1_{[(m_1-1)X_1\oplus m_2X_2\oplus\ldots\oplus m_rX_r]}*1_{[X_1]}=m_11_{[m_1X_1\oplus m_2X_2\oplus\ldots\oplus m_rX_r]}+\ldots
$$
Thus $1_{[(m_1-1)X_1\oplus m_2X_2\oplus\ldots\oplus m_rX_r]}*1_{[X_1]}([X])=m_1$.

Similarly, we have $1_{[m_1X_1\oplus \ldots\oplus(m_i-1)X_i\oplus\ldots\oplus m_rX_r]}*1_{[X_i]}([X])=m_i$ for $i=2,\ldots,r$. Hence $(1_{n\mathcal{O}}*1_{\mathcal{O}})([X])=\sum\limits_{i=1}^r m_i=n+1$
which completes the proof.
\end{proof}

By induction, we have the following corollary.
\begin{cor}\label{cor2}
Let $\mathcal{O}_{1},\mathcal{O}_{2},\ldots,\mathcal{O}_{k}$ be indecomposable constructible sets which are pairwise disjoint. Then we have the following equations
$$
1^{*n_{1}}_{\mathcal{O}_{1}}*1^{*n_{2}}_{\mathcal{O}_{2}}\ldots*1^{*n_{k}}_{\mathcal{O}_{k}}= n_{1}!n_{2}!\ldots n_{k}!1_{n_{1}\mathcal{O}_{1}\oplus\ldots\oplus n_{k}\mathcal{O}_{k}}+\ldots,
$$
$$
1_{m_{1}\mathcal{O}_{1}\oplus\ldots\oplus m_{k}\mathcal{O}_{k}}*1_{n_{1}\mathcal{O}_{1}\oplus\ldots\oplus n_{k}\mathcal{O}_{k}}= \prod\limits_{i=1}^{k}\frac{(m_{i}+n_{i})!}{m_{i}!n_{i}!}1_{(m_{1}+n_{1})\mathcal{O}_{1}\oplus\ldots\oplus (m_{k}+n_{k})\mathcal{O}_{k}}+\ldots,
$$
where $k$ is a positive integer and $m_1,\ldots,m_k,n_1,\ldots,n_k\in\mathbb{N}$.
\end{cor}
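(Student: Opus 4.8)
The plan is to prove both identities by induction on $k$, treating the first identity as the main step and deducing the second from it. The guiding principle is that, by Corollary~\ref{cor1}, convolution is filtered by the invariant $\gamma$: a product $1_{\mathcal{O}_1}*1_{\mathcal{O}_2}$ is a combination of $1_{\mathcal{Q}}$ with $\gamma(\mathcal{Q})\le\gamma(\mathcal{O}_1)+\gamma(\mathcal{O}_2)$, and the ``$+\cdots$'' in both displays collects exactly the terms of strictly smaller $\gamma$. Thus the entire statement is a computation of the \emph{initial} (top-$\gamma$) term of each product, which I would carry out on the associated graded of this $\gamma$-filtration.

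For the first identity I would induct on $k$, the base case $k=1$ being exactly Proposition~\ref{prop2}. Since $1^{*n_i}_{\mathcal{O}_i}=n_i!\,1_{n_i\mathcal{O}_i}+\cdots$ has $\gamma$-degree $n_i$ by Proposition~\ref{prop2}, and convolution is $\gamma$-filtered, the initial term of $1^{*n_1}_{\mathcal{O}_1}*\cdots*1^{*n_k}_{\mathcal{O}_k}$ is obtained by convolving the initial terms of the factors; it therefore suffices to show that the initial term of $1_{n_1\mathcal{O}_1\oplus\cdots\oplus n_{k-1}\mathcal{O}_{k-1}}*1_{n_k\mathcal{O}_k}$ is $1_{n_1\mathcal{O}_1\oplus\cdots\oplus n_k\mathcal{O}_k}$ with coefficient $1$. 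Evaluating at $[X]$ with $\gamma(X)=\sum_i n_i$, Lemma~\ref{lem2} (Remark~\ref{rem3.10}) forces every contributing conflation to be split, so $X\cong A\oplus B$ with $[A]\in n_1\mathcal{O}_1\oplus\cdots\oplus n_{k-1}\mathcal{O}_{k-1}$ and $[B]\in n_k\mathcal{O}_k$; since $\mathcal{O}_k$ is disjoint from $\mathcal{O}_1,\dots,\mathcal{O}_{k-1}$, the objects $A$ and $B$ share no indecomposable constituents, so by the Krull--Schmidt property the pair $([A],[B])$ is determined uniquely and, exactly as in the Corollary following Lemma~\ref{lem3} and formula \eqref{equ2}, the weight $m_{\pi_m}([A\to X\to B])\cdot\chi^{\na}$ equals $1$. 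This yields the coefficient $n_1!\cdots n_k!$ and closes the induction.

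For the second identity I would pass to the associated graded, on which the induced product is commutative: for disjoint types both $1_{\mathcal{O}}*1_{\mathcal{O}'}$ and $1_{\mathcal{O}'}*1_{\mathcal{O}}$ have initial term $1_{\mathcal{O}\oplus\mathcal{O}'}$, and for a single type the product is symmetric in the two indices. Applying the first identity to $1^{*m_i}_{\mathcal{O}_i}*1^{*n_i}_{\mathcal{O}_i}=1^{*(m_i+n_i)}_{\mathcal{O}_i}$ and comparing initial terms gives the single-index case $(1_{m_i\mathcal{O}_i}*1_{n_i\mathcal{O}_i})_{\mathrm{top}}=\tfrac{(m_i+n_i)!}{m_i!\,n_i!}1_{(m_i+n_i)\mathcal{O}_i}$; the coefficient-$1$ gluing from the first identity identifies $\prod_i\overline{1_{m_i\mathcal{O}_i}}$ with $\overline{1_{m_1\mathcal{O}_1\oplus\cdots\oplus m_k\mathcal{O}_k}}$ in the graded. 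Regrouping $\prod_i\overline{1_{m_i\mathcal{O}_i}}\cdot\prod_j\overline{1_{n_j\mathcal{O}_j}}$ index-by-index using commutativity, and gluing once more, then produces $\prod_i\tfrac{(m_i+n_i)!}{m_i!\,n_i!}$ in front of $1_{(m_1+n_1)\mathcal{O}_1\oplus\cdots\oplus(m_k+n_k)\mathcal{O}_k}$, which is the claim.

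The main obstacle is the top-degree coefficient computation: showing that only split conflations survive at maximal $\gamma$ (this is Lemma~\ref{lem2}), that the gluing coefficient for disjoint types is exactly $1$, and that the overlapping-type coefficient is the binomial $\binom{m_i+n_i}{m_i}$. All three reduce to evaluating Euler characteristics of quotients $\Aut/p_1(\Aut)$ of the shape appearing in \eqref{equ1}--\eqref{equ2}, so the essential input is already available; the remaining verification that every lower-$\gamma$ correction stays lower under convolution is routine given Corollary~\ref{cor1}.
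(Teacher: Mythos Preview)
Your proposal is correct and follows essentially the same route the paper intends: the paper's own proof is the single line ``By induction, we have the following corollary,'' meaning induction from Proposition~\ref{prop2} together with the $\gamma$-filtration from Corollary~\ref{cor1}, which is precisely what you do. Your argument for the second identity via the associated graded (commutativity at top $\gamma$, coefficient-$1$ gluing for disjoint types, and the binomial from $1_{\mathcal{O}_i}^{*m_i}*1_{\mathcal{O}_i}^{*n_i}=1_{\mathcal{O}_i}^{*(m_i+n_i)}$) is a clean way to package the induction and supplies the details the paper omits; the one place to be careful is your appeal to ``the Corollary following Lemma~\ref{lem3},'' which as stated covers only indecomposable $\mathcal{O}_1,\mathcal{O}_2$, so for the coefficient-$1$ step you should instead argue exactly as in the proof of Proposition~\ref{prop2}, reducing via \eqref{equ2} to singleton classes.
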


Let $\text{Ind}(\alpha)$ be the subset of $\fObj_\A^{\alpha}(\mathbb{K})$ such that X are indecomposable for all $[X]\in\text{Ind}(\alpha)$.
\begin{lem}\label{lem3.17}
For each $\alpha\in K^{\prime}(\A)$, $\rm Ind(\alpha)$ is a locally constructible set.
\end{lem}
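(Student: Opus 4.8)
The plan is to verify local constructibility directly: I will show that for every constructible set $\mathcal{O}\subseteq\fObj_\A(\K)$ the intersection $\mathrm{Ind}(\alpha)\cap\mathcal{O}$ is constructible. First I would shrink the ambient set. Since $\mathrm{Ind}(\alpha)\subseteq\fObj_\A^\alpha(\K)$ and $\fObj_\A^\alpha$ is an open and closed substack, the set $\mathcal{O}'=\mathcal{O}\cap\fObj_\A^\alpha(\K)$ is again constructible and $\mathrm{Ind}(\alpha)\cap\mathcal{O}=\mathrm{Ind}(\alpha)\cap\mathcal{O}'$, so I may assume $\mathcal{O}\subseteq\fObj_\A^\alpha(\K)$. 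Recalling that $\gamma(X)=\rk\Aut(X)$ equals the number of indecomposable summands of $X$, an object $X$ with $\tilde{[X]}=\alpha\neq0$ is indecomposable precisely when $\gamma(X)=1$. Hence
\[
\mathrm{Ind}(\alpha)\cap\mathcal{O}=\{[X]\in\mathcal{O}\mid X\text{ is indecomposable}\},
\]
and it suffices to show that the indecomposable locus inside any constructible set is constructible, equivalently that the decomposable locus $\{[X]:\gamma(X)\geq 2\}$ is; the case $\alpha=0$ is trivial since $\mathrm{Ind}(0)=\emptyset$.

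To establish this, I would use that $\mathcal{O}$ is a finite union of $\K$-points of finite type substacks of $\fObj_\A$, each with affine geometric stabilizers, and apply \cite[Proposition 3.5.9]{kresch99} to stratify each into global quotient stacks $[V/G]$ with $G$ a smooth connected linear algebraic $\K$-group. Under a $1$-isomorphism $\mathcal{F}_i\cong[V/G]$, a geometric point $[X]$ corresponds to an orbit $G\cdot v$, and its stabilizer group $\Iso_\K([X])\cong\Aut(X)$ is isomorphic to $\Stab_G(v)$; therefore $\gamma$ pulls back along $V(\K)\to[V/G](\K)$ to the $G$-invariant function $v\mapsto\rk\Stab_G(v)$. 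Thus everything reduces to showing that the decomposable locus is a constructible, $G$-invariant subset of the variety $V$, since such a subset descends to a constructible subset of $[V/G](\K)$.

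For the last step, characteristic zero lets me detect decomposability through idempotents rather than through torus ranks. Using the universal object carried by the atlas $V\to[V/G]$, the endomorphism algebras $\End(X_v)=\Hom_\A(X_v,X_v)$ form a constructible family over $V$, inside whose total space the locus $\{(v,e)\mid e^2=e,\ e\neq0,\ e\neq\id_{X_v}\}$ is locally closed (the idempotent condition is closed and the two exceptional sections $0,\id_{X_v}$ are removed). By Chevalley's theorem its image in $V$ is constructible, and this image is exactly the decomposable locus $\{v:\gamma(X_v)\geq 2\}$: an object splits off a nontrivial summand iff its endomorphism algebra contains a nontrivial idempotent. The complement, the indecomposable locus, is then constructible and $G$-invariant, and assembling these sets over the finite stratification and the finitely many $\mathcal{F}_i$ shows $\mathrm{Ind}(\alpha)\cap\mathcal{O}$ is constructible.

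The main obstacle is the final step, namely making precise the \emph{constructible family of endomorphism algebras} over the atlas in the abstract exact-category setting, i.e. that $\End(X_v)$ and its idempotent locus vary constructibly in $v$. In the module or quiver situations of Example~\ref{ex1} this is transparent because $\End$ is cut out by linear equations on a representation variety, but for a general $\A$ one must invoke the constructibility results for $\Hom$ and $\End$ that underlie Joyce's machinery in \cite{joyceJLMS06}—the same circle of results that makes $m_{\mathbf{\pi}_m}$ locally constructible in Section~\ref{ai33}. Once that constructibility is granted, the Chevalley-image argument closes the proof; alternatively one may argue directly that the stabilizer rank $v\mapsto\rk\Stab_G(v)$ is constructible on $V$ and take the level set at $1$.
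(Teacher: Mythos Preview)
Your approach is correct in outline but takes a genuinely different route from the paper, and the gap you flag at the end is real. The paper avoids it entirely by working at the stack level rather than descending to an atlas. It considers the direct-sum map
\[
f:\coprod_{\substack{\beta,\gamma\in K'(\A)\setminus\{0\}\\ \beta+\gamma=\alpha}}\fObj_\A^\beta(\K)\times\fObj_\A^\gamma(\K)\longrightarrow\fObj_\A^\alpha(\K),\qquad ([B],[C])\longmapsto[B\oplus C],
\]
notes that $f$ is a pseudomorphism, that any constructible subset of the domain meets only finitely many summands (so the domain is locally constructible), and concludes that $\mathrm{Im}\,f$ is locally constructible. Since $\mathrm{Ind}(\alpha)=\fObj_\A^\alpha(\K)\setminus\mathrm{Im}\,f$, the result follows.

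Compared with your argument, the paper's proof is shorter and uses only ingredients already set up in Joyce's framework: that direct sum comes from a $1$-morphism, and that images under pseudomorphisms preserve (local) constructibility. Your route via Kresch's stratification, stabilizer ranks, and idempotent loci is more geometric and would give finer information (the level sets of $\gamma$), but it requires you to establish that the family $v\mapsto\End(X_v)$ is constructible over the atlas in a general exact category, which is exactly the step you identify as problematic. In the quiver or module situations this is easy; in the abstract setting it is extra work that the paper's argument simply does not need.
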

\begin{proof}
Assume $\alpha,\beta,\gamma\in K^{\prime}(\A)\setminus\{0\}$. The map
$$
f:\coprod_{\beta,\gamma;\atop\beta+\gamma=\alpha}\fObj_\A^{\beta}(\mathbb{K})\times\fObj_\A^{\gamma}(\mathbb{K})\rightarrow\fObj_\A^{\alpha}(\mathbb{K})
$$
is defined by $([B],[C])\mapsto[B\oplus C]$. It is clear that $f$ is a pseudomorphism. Every $\fObj_\A^{\beta}(\mathbb{K})\times\fObj_\A^{\gamma}(\mathbb{K})$ is a locally constructible set. For any constructible set $\mathcal{C}\subseteq\fObj_\A(\mathbb{K})\times\fObj_\A(\mathbb{K})$, there are finitely many $\fObj_\A^{\beta}(\mathbb{K})\times\fObj_\A^{\gamma}(\mathbb{K})$ such that $\mathcal{C}\cap (\fObj_\A^{\beta}(\mathbb{K})\times\fObj_\A^{\gamma} (\mathbb{K}))\neq\emptyset$. Hence $\amalg_{\beta,\gamma;\beta+\gamma=\alpha}\fObj_\A^{\beta}(\mathbb{K})\times\fObj_\A^{\gamma}(\mathbb{K})$ is locally constructible. Then $\text{Im}f$ is a locally constructible set. It follows that $\text{Ind}(\alpha)=\fObj_\A^{\alpha}(\mathbb{K})\setminus\text{Im}f$ is locally constructible.
\end{proof}

The following proposition is due to \cite[Proposition~11]{dxx10}.
\begin{prop}\label{prop3}
Let $\mathcal{O}_{1},\mathcal{O}_{2}$ be two constructible sets of Krull-Schmidt. It follows that
$$
1_{\mathcal{O}_{1}}*1_{\mathcal{O}_{2}}=\sum\limits_{i=1}^{c}a_{i} 1_{\mathcal{Q}_{i}}
$$
for some $c\in\mathbb{N}^+$, where $a_{i}=1_{\mathcal{O}_{1}}*1_{\mathcal{O}_{2}}([X])$ for each $[X]\in\mathcal{Q}_{i}$ and $\mathcal{Q}_{i}$ are constructible sets of stratified Krull-Schmidt such that $\gamma(\mathcal{Q}_i)\leq \gamma(\mathcal{O}_1)+\gamma(\mathcal{O}_2)$.
\end{prop}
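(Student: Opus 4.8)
The plan is to reduce to Krull--Schmidt sets built over a common family of pairwise disjoint indecomposable blocks, then to induct on the total number of indecomposable summands, peeling off the top $\gamma$-stratum with Corollary \ref{cor2} and analysing the remaining strata with Lemma \ref{rem3.11}. First I would carry out a \emph{reduction to disjoint blocks}. Collecting all the indecomposable constructible sets occurring in the Krull--Schmidt presentations of $\mathcal{O}_1$ and $\mathcal{O}_2$, and using that intersections and set-differences of indecomposable constructible sets are again indecomposable constructible, the disjointification identity for $\oplus$ recalled in Section \ref{sec3.1} refines them into a single finite family $\mathcal{P}_1,\ldots,\mathcal{P}_s$ of pairwise disjoint indecomposable constructible sets. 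Each of $\mathcal{O}_1,\mathcal{O}_2$ then becomes a disjoint union of Krull--Schmidt sets of the form $\bigoplus_i c_i\mathcal{P}_i$, so by bilinearity of $*$ it suffices to treat a single product
$$
1_{\mathcal{A}}*1_{\mathcal{B}},\qquad \mathcal{A}=\bigoplus_{i=1}^s a_i\mathcal{P}_i,\quad \mathcal{B}=\bigoplus_{i=1}^s b_i\mathcal{P}_i,
$$
with $\gamma(\mathcal{A})=\sum_i a_i$ and $\gamma(\mathcal{B})=\sum_i b_i$, and I would induct on $N=\gamma(\mathcal{A})+\gamma(\mathcal{B})$.

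Next, the \emph{top stratum}. By Corollary \ref{cor1} the product is a finite sum $\sum_i a_i 1_{\mathcal{Q}_i}$ whose level sets satisfy $\gamma(\mathcal{Q}_i)\le N$, so only $[Y]$ with $\gamma(Y)\le N$ occur. On the locus $\gamma(Y)=N$, Lemma \ref{lem2} (equivalently Remark \ref{rem3.10}) forces $Y\cong X\oplus Z$ with $[X]\in\mathcal{A}$, $[Z]\in\mathcal{B}$, and Corollary \ref{cor2} evaluates the product there as $\prod_i\binom{a_i+b_i}{a_i}\,1_{\mathcal{R}}$ with $\mathcal{R}=\bigoplus_i(a_i+b_i)\mathcal{P}_i$, a Krull--Schmidt set. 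Subtracting this term leaves $F'=1_{\mathcal{A}}*1_{\mathcal{B}}-\prod_i\binom{a_i+b_i}{a_i}1_{\mathcal{R}}$, a constructible function supported on $\{\gamma(Y)<N\}$.

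The hard part is the \emph{lower strata}, i.e.\ showing that $F'$ is a combination of characteristic functions of stratified Krull--Schmidt sets. For $[Y]$ in the support with $\gamma(Y)<N$ there is a conflation $X\xrightarrow{f}Y\xrightarrow{g}Z$ with $[X]\in\mathcal{A}$, $[Z]\in\mathcal{B}$, $m_{\pi_m}\ne0$ and $\gamma(Y)<\gamma(X)+\gamma(Z)$; applying Lemma \ref{rem3.11} iteratively along a decomposition $Y\cong\bigoplus_j Y_j$ into indecomposables splits the conflation as $\bigoplus_j(X_j\to Y_j\to Z_j)$ with $X\cong\bigoplus_j X_j$ and $Z\cong\bigoplus_j Z_j$. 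Each indecomposable summand $Y_j$ is thus either one of the blocks (so $[Y_j]\in\mathcal{P}_i$ for some $i$, arising from a split extension on one side) or the middle term of a genuine extension of a $\mathcal{B}$-block by an $\mathcal{A}$-block, in which case $\gamma$ drops. The remaining task is to show that, as $(X_j,Z_j)$ vary, these middle terms trace out \emph{finitely many} indecomposable constructible sets, so that partitioning $\mathrm{supp}(F')$ according to the combinatorial type of this decomposition yields finitely many Krull--Schmidt strata; then $F'$ is constant on each by construction, $\gamma$ on each is $<N$, and I would conclude by the inductive hypothesis together with the constructibility inputs (Lemma \ref{lem1}, Lemma \ref{lem3.17}, and the finiteness of the value set from Corollary \ref{cor1}). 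I expect this constructibility-and-finiteness bookkeeping --- making precise that the extension loci are constructible, that only finitely many decomposition types occur, and that $F'$ stays constant on each resulting stratum --- to be the main obstacle; the algebraic skeleton is already supplied by Lemmas \ref{lem2} and \ref{rem3.11}.
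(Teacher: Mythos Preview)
Your approach is essentially the same as the paper's: reduce to pairwise disjoint indecomposable blocks, invoke Lemma~\ref{rem3.11} to see that every indecomposable summand of a middle term $Y$ arises as an extension of a sub-collection of the $\mathcal{O}_2$-blocks by a sub-collection of the $\mathcal{O}_1$-blocks, and then organise the support of the product into finitely many Krull--Schmidt pieces indexed by these combinatorial data. The induction on $N=\gamma(\mathcal{A})+\gamma(\mathcal{B})$ you set up is not actually needed and does not close as stated (the inductive hypothesis concerns \emph{products} with smaller $N$, whereas $F'$ is not visibly such a product), but it is harmless: once the strata are exhibited as stratified Krull--Schmidt, you are done in one step.

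The one point where the paper is sharper is exactly the obstacle you flag, namely constancy of the product on each combinatorial-type stratum. The paper sidesteps this by \emph{building the value into the stratum}: for each pair of $r$-partitions $\{A_k\},\{B_k\}$ of the block index set and each tuple $(c_k)$, it defines $\mathcal{I}_{A_k,B_k,c_k}=\{[X]\ \text{indecomposable}: 1_{\mathcal{O}_{A_k}}*1_{\mathcal{O}_{B_k}}([X])=c_k\}$ and $\mathcal{R}_{A_k,B_k,c_k}=\mathcal{O}_{A_k}\oplus\mathcal{O}_{B_k}$ or $\emptyset$ according to Corollary~\ref{cor2}. Constructibility of $\mathcal{I}_{A_k,B_k,c_k}$ follows from Lemma~\ref{lem3.17} together with the fact that $1_{\mathcal{O}_{A_k}}*1_{\mathcal{O}_{B_k}}$ is a constructible function, and finiteness of the relevant $(c_k)$ is exactly the finiteness of the value set. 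The support of $1_{\mathcal{O}_1}*1_{\mathcal{O}_2}$ is then covered by the direct sums $\bigoplus_k \mathcal{O}_{A_k,B_k,c_k}$ (each factor being either an $\mathcal{R}$ or an $\mathcal{I}$), and constancy is automatic because the value on such a piece is determined by the $c_k$. This is precisely the bookkeeping you anticipated; once you refine your ``combinatorial type'' to include the level-set data $c_k$, your argument and the paper's coincide.
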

\begin{proof}
Because $\mathcal{O}_{1},\mathcal{O}_{2}$ are constructible sets, the equation holds for some constructible sets $\mathcal{Q}_{i}$ with $\gamma(\mathcal{Q}_i)\leq \gamma(\mathcal{O}_1)+\gamma(\mathcal{O}_2)$ by Corollary~\ref{cor1}.

For every $[Y_{i}]\in\mathcal{Q}_{i}$, $1_{\mathcal{O}_{1}}*1_{\mathcal{O}_{2}}([Y_i])\neq0$. By Lemma~\ref{lem1}, there exist $X_{i},Z_{i}\in\Obj(\mathcal{A})$ such that $[X_{i}]\in \mathcal{O}_{1}$, $[Z_{i}]\in \mathcal{O}_{2}$ and $1_{[X_{i}]}*1_{[Z_{i}]}([Y_{i}])\neq0$ since $1_{\mathcal{O}_{1}}*1_{\mathcal{O}_{2}}([Y_{i}])\neq0$. Thanks to Lemma~\ref{lem2}, we have that $\gamma(Y_i)\leq\gamma(X_i)+\gamma(Z_i)$. According to Lemma~\ref{rem3.11}, all indecomposable direct summands of $Y_{i}$ are extended by the direct summands of $X_{i}$ and $Z_{i}$ since $1_{[X_{i}]}*1_{[Z_{i}]}([Y_{i}])\neq0$.

By the discussion in Section \ref{sec3.1}, we can suppose that $\mathcal{O}_{1}=\bigoplus\limits_{i=1}^{t}a_{i}\mathcal{C}_{i}$ and $\mathcal{O}_{2}=\bigoplus\limits_{j=1}^{t}b_{j}\mathcal{C}_{j}$, where $a_{i},b_{j}\in\{0,1\}$ for all $i,j$ and $\mathcal{C}_{i}$ are indecomposable constructible sets such that $\mathcal{C}_{i}\cap\mathcal{C}_{j}=\emptyset$ or $\mathcal{C}_{i}=\mathcal{C}_{j}$ for all $i\neq j$. Let $1\leq r\leq t$, the set
$$
\{A_{1},A_{2},\ldots,A_{r}~|~\emptyset\neq A_{i}\subseteq \{1,\ldots,n\}~\text{for}~i=1,\ldots,r\}
$$
is called an $r$-partition of $\{1,2,\ldots,t\}$ if $A_{1}\cup A_{2}\cup\ldots\cup A_{r}=\{1,2,\ldots,t\}$ and $A_{i}\cap A_{j}=\emptyset$ for all $i\neq j$. Obviously, the cardinal number of all partitions of $\{1,2,\ldots,t\}$ is finite. Let $\{A_{1},A_{2},\ldots,A_{r}\}$, $\{B_{1},B_{2},\ldots,B_{r}\}$ be two $r$-partitions of $\{1,2,\ldots,t\}$ and $c_{k}\in\mathbb{Q}\setminus\{0\}$ for $k=1,2,\ldots,r$. Set $\mathcal{O}_{A_{k}}=\bigoplus\limits_{i\in A_{k}}a_{i}\mathcal{C}_{i}$ and $\mathcal{O}_{B_{k}}=\bigoplus\limits_{j\in B_{k}}b_{j}\mathcal{C}_{j}$ for $1\leq k\leq r$. Then we have
$$
\mathcal{R}_{A_{k},B_{k},c_{k}}=\{[X]\in\mathcal{O}_{A_{k}}\oplus \mathcal{O}_{B_{k}}~|~1_{\mathcal{O}_{A_{k}}}*1_{\mathcal{O}_{B_{k}}}([X])=c_{k}\},
$$
$$
\mathcal{I}_{A_{k},B_{k},c_{k}}=\{[X]~|~X~\text{indecomposable},1_{\mathcal{O}_{A_{k}}}*1_{\mathcal{O}_{B_{k}}}([X])=c_{k}\}.
$$
This means that for each $[X]\in\mathcal{R}_{A_{k},B_{k},c_{k}}$, there exist $[A]\in\mathcal{O}_{A_{k}}$ and $[B]\in\mathcal{O}_{B_{k}}$ such that $X\cong A\oplus B$. For each $[Y]\in\mathcal{I}_{A_{k},B_{k},c_{k}}$, there exist $[C]\in\mathcal{O}_{A_{k}}$ and $[D]\in\mathcal{O}_{B_{k}}$ such that $C\rightarrow Y\rightarrow D$ is a non-split conflation in $\A$. Note that
$$
\mathcal{R}_{A_{k},B_{k},c_{k}}=((1_{\mathcal{O}_{A_{k}}}*1_{\mathcal{O}_{B_{k}}})^{-1}(c_{k}))\cap(\mathcal{O}_{A_{k}}\oplus \mathcal{O}_{B_{k}}).
$$
By Corollary~\ref{cor2}, $\mathcal{R}_{A_{k},B_{k},c_{k}}=\emptyset$ or $\mathcal{O}_{A_{k}}\oplus \mathcal{O}_{B_{k}}$. Hence $\mathcal{R}_{A_{k},B_{k},c_{k}}$ is a constructible set of Krull-Schmidt. There exist $\alpha_1,\ldots,\alpha_s\in K^{\prime}(\A)$ such that $\mathcal{I}_{A_{k},B_{k},c_{k}}=(\amalg_{i=1}^s\text{Ind}(\alpha_i))\cap ((1_{\mathcal{O}_{A_{k}}}*1_{\mathcal{O}_{B_{k}}})^{-1}(c_{k}))$. By Lemma~\ref{lem3.17}, $\mathcal{I}_{A_{k},B_{k},c_{k}}$ is an indecomposable constructible set.

Finally, $1_{\mathcal{O}_{1}}*1_{\mathcal{O}_{2}}$ is a $\mathbb{Q}$-linear combination of finitely many  $1_{\oplus_{k=1}^{r}\mathcal{O}_{A_{k},B_{k},c_{k}}}$, where $\mathcal{O}_{A_{k},B_{k},c_{k}}$ run through $\mathcal{R}_{A_{k},B_{k},c_{k}}$ and $\mathcal{I}_{A_{k},B_{k},c_{k}}$ for all $r$-partitions and $r=1,2,\ldots,t$. We finish the proof.
\end{proof}

Thus we summarize what we have proved as the following theorem which is due to \cite[Theorem~12]{dxx10}.
\begin{thm}
The $\mathbb{Q}$-space $\CF^{\rm{KS}}(\fObj_\A)$ is an associative $\mathbb{Q}$-algebra with convolution multiplication $*$ and identity $1_{[0]}$.
\end{thm}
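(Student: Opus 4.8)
The plan is to deduce this from the earlier theorem asserting that the full space $\CF(\fObj_\A)$ is an associative $\Q$-algebra under the convolution product $*$ with unit $1_{[0]}$. Since the algebra axioms—associativity, bilinearity, and the unit law—already hold in the ambient space, it suffices to verify that $\CF^{\rm KS}(\fObj_\A)$ is a unital subalgebra, i.e. that it contains $1_{[0]}$ and is closed under $*$; everything else is then inherited automatically.

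First I would check that the unit lies in the subspace. By definition $0\mathcal{O}=\{[0]\}$ is a constructible set of Krull--Schmidt, hence of stratified Krull--Schmidt, so its characteristic function $1_{[0]}$ belongs to $\CF^{\rm KS}(\fObj_\A)$ by construction of the spanning set in Section~\ref{sec3.1}.

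The core step is closure under $*$. As $\CF^{\rm KS}(\fObj_\A)$ is spanned by the functions $1_{\mathcal{O}}$ with $\mathcal{O}$ of stratified Krull--Schmidt, and $*$ is bilinear, it is enough to show that $1_{\mathcal{O}_1}*1_{\mathcal{O}_2}$ again lies in $\CF^{\rm KS}(\fObj_\A)$ for two such generators. Writing $\mathcal{O}_1=\amalg_i \mathcal{O}_{1,i}$ and $\mathcal{O}_2=\amalg_j \mathcal{O}_{2,j}$ as disjoint unions of constructible sets of Krull--Schmidt, the disjointness gives $1_{\mathcal{O}_1}=\sum_i 1_{\mathcal{O}_{1,i}}$ and likewise for $\mathcal{O}_2$, so bilinearity yields
$$
1_{\mathcal{O}_1}*1_{\mathcal{O}_2}=\sum_{i,j}1_{\mathcal{O}_{1,i}}*1_{\mathcal{O}_{2,j}}.
$$
Each summand is a product of characteristic functions of two Krull--Schmidt sets, and Proposition~\ref{prop3} expresses every such product as a finite $\Q$-linear combination $\sum_k a_k 1_{\mathcal{Q}_k}$ with each $\mathcal{Q}_k$ of stratified Krull--Schmidt and $\gamma(\mathcal{Q}_k)\le\gamma(\mathcal{O}_{1,i})+\gamma(\mathcal{O}_{2,j})$. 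Hence the total sum lies in $\CF^{\rm KS}(\fObj_\A)$, establishing closure.

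The substantive content—that a product of two Krull--Schmidt characteristic functions decomposes into stratified Krull--Schmidt pieces—is precisely Proposition~\ref{prop3}, which itself rests on the rank inequality $\gamma(Y)\le\gamma(X)+\gamma(Z)$ of Lemma~\ref{lem2}, its equality case, and the splitting behaviour of Lemma~\ref{rem3.11}. Consequently no genuinely new difficulty appears at this stage: the theorem is a bookkeeping consequence of the structural results already in hand. The only points needing care are ensuring the stratification is genuinely disjoint, so that characteristic functions add without overlap corrections, and confirming that the unit lies in the spanning set—both immediate from the definitions—so the main burden of the argument was discharged earlier in Proposition~\ref{prop3}.
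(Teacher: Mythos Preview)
Your proposal is correct and matches the paper's approach: the paper presents this theorem as a direct summary of what was already established, with Proposition~\ref{prop3} carrying the real weight, and your argument makes explicit exactly the bookkeeping (unit in the spanning set, reduction by bilinearity and disjoint decomposition to the Krull--Schmidt case) that the paper leaves implicit.
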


\subsection{The universal enveloping algebra of $\CFi(\fObj_\A)$}
\label{ai42}

Let $U(\CFi(\fObj_\A))$ denote the universal enveloping algebra of $\CFi(\fObj_\A)$ over $\mathbb{Q}$. The multiplication in $U(\CFi(\fObj_\A))$ will be written as $(x,y)\mapsto xy$. There is a $\mathbb{Q}$-algebra homomorphism
$$
\Phi :U(\CFi(\fObj_\A))\rightarrow\CF^{\text{KS}}(\fObj_\A)
$$
defined by $\Phi(1)=1_{[0]}$ and $\Phi(f_1f_2\ldots f_n)=f_1*f_2*\ldots*f_n$, where $f_1,f_2,\ldots,f_n$ belong to $\CFi(\fObj_\A)$.

The following theorem is related to \cite[Theorem 15]{dxx10}.
\begin{thm}
$\Phi :U(\CFi(\fObj_\A))\rightarrow\CF^{\rm KS}(\fObj_\A)$ is an isomorphism.
\end{thm}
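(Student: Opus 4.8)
The plan is to prove that $\Phi$ is an isomorphism by establishing surjectivity and injectivity separately, in both cases exploiting the leading--term computations of Corollary~\ref{cor2} together with the filtration of $\CF^{\text{KS}}(\fObj_\A)$ by the invariant $\gamma$.

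For surjectivity I would induct on $\gamma$. Since $\CF^{\text{KS}}(\fObj_\A)$ is spanned by the $1_{\mathcal{Q}}$ with $\mathcal{Q}$ of stratified Krull--Schmidt, and such a $\mathcal{Q}$ is a finite disjoint union of Krull--Schmidt sets, it suffices to show each $1_{n_1\mathcal{O}_1\op\cdots\op n_k\mathcal{O}_k}$, with the $\mathcal{O}_i$ pairwise disjoint indecomposable, lies in $\Im\Phi$. The first identity of Corollary~\ref{cor2} gives
\[
\Phi\big(1_{\mathcal{O}_1}^{n_1}\cdots 1_{\mathcal{O}_k}^{n_k}\big)=1^{*n_1}_{\mathcal{O}_1}*\cdots*1^{*n_k}_{\mathcal{O}_k}=n_1!\cdots n_k!\,1_{n_1\mathcal{O}_1\op\cdots\op n_k\mathcal{O}_k}+\Xi,
\]
where, by Proposition~\ref{prop3} and Corollary~\ref{cor1}, $\Xi$ is a $\Q$--linear combination of $1_{\mathcal{R}}$ with $\mathcal{R}$ of stratified Krull--Schmidt and $\gamma(\mathcal{R})<n:=n_1+\cdots+n_k$. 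By the inductive hypothesis $\Xi\in\Im\Phi$, hence $1_{n_1\mathcal{O}_1\op\cdots\op n_k\mathcal{O}_k}\in\Im\Phi$; the base cases $\gamma=0,1$ are $\Phi(1)=1_{[0]}$ and $\Phi(1_{\mathcal{O}})=1_{\mathcal{O}}$. Thus $\Phi$ is onto.

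For injectivity I would introduce compatible filtrations and pass to the associated graded. On $U(\CFi(\fObj_\A))$ take the usual Poincar\'e--Birkhoff--Witt filtration $U_{\le n}$ by the number of Lie factors, and on $\CF^{\text{KS}}(\fObj_\A)$ take $F_{\le n}=\langle 1_{\mathcal{Q}}:\gamma(\mathcal{Q})\le n\rangle$, which is an algebra filtration by Proposition~\ref{prop3}; the $\gamma$--bound noted after Corollary~\ref{cor1} shows $\Phi(U_{\le n})\subseteq F_{\le n}$. First I would fix a $\Q$--basis of $\CFi(\fObj_\A)$ of the form $\{1_{\mathcal{B}_\la}\}_\la$ with the $\mathcal{B}_\la$ pairwise disjoint indecomposable constructible sets, so that by the PBW theorem the ordered monomials $M=\prod_\la 1_{\mathcal{B}_\la}^{n_\la}$ form a $\Q$--basis of $U(\CFi(\fObj_\A))$. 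Now take $u\ne0$, write $u=\sum_m c_m M_m$, and let $n$ be the largest value of $\sum_\la n_\la$ occurring with $c_m\ne0$. By Corollary~\ref{cor2} the $\gamma=n$ part of $\Phi(M_m)$ in $F_{\le n}/F_{\le n-1}$ equals $\big(\prod_\la n_\la!\big)\,[1_{\bigop_\la n_\la\mathcal{B}_\la}]$. The crucial point is that distinct degree-$n$ monomials yield Krull--Schmidt sets $\bigop_\la n_\la\mathcal{B}_\la$ with pairwise disjoint supports: since the $\mathcal{B}_\la$ are disjoint and indecomposable, Krull--Schmidt uniqueness forces each multiplicity $n_\la$ to be read off from $[X]$, so distinct multiplicity vectors cannot share an object. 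Hence these characteristic functions are linearly independent modulo $F_{\le n-1}$, the degree-$n$ part of $\Phi(u)$ is nonzero, and $\Phi(u)\ne0$; combined with surjectivity, $\Phi$ is an isomorphism.

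I expect the main obstacle to be the bookkeeping in the injectivity step. One must justify that $\CFi(\fObj_\A)$ really admits a basis of pairwise disjoint indecomposable atoms in the locally finite type setting---here I would use that each constructible function is supported on finitely many $\fObj_\A^\alpha$ and refine, invoking Lemma~\ref{lem3.17}---and then verify carefully that the two filtrations are compatible, so that the PBW leading terms correspond exactly to the $\gamma$--leading terms of Corollary~\ref{cor2}. Once the graded map $\operatorname{gr}\Phi$ is identified with the evident injection onto the span of the disjointly supported functions $1_{\bigop_\la n_\la\mathcal{B}_\la}$, injectivity follows at once.
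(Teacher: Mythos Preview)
Your overall strategy matches the paper's: surjectivity by induction on $\gamma$ via Corollary~\ref{cor2} and Proposition~\ref{prop3}, injectivity by comparing the PBW filtration on $U$ with the $\gamma$-filtration on $\CF^{\rm KS}(\fObj_\A)$ and using Corollary~\ref{cor2} to identify leading terms. The surjectivity argument is essentially identical to the paper's.

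There is, however, a genuine gap in your injectivity step. You propose to fix a global $\Q$-basis $\{1_{\mathcal{B}_\la}\}_\la$ of $\CFi(\fObj_\A)$ with the $\mathcal{B}_\la$ pairwise disjoint, and you flag this as the main obstacle. In fact such a basis need not exist. Already for $\A=\coh(\KP^1)$ the indecomposable locus in a single class (length-one torsion sheaves) is a copy of $\KP^1$; if the $\mathcal{B}_\la$ are pairwise disjoint then every $f\in\CFi$ must be constant on each $\mathcal{B}_\la$, so every constructible subset of $\KP^1$ would have to be a \emph{finite} union of $\mathcal{B}_\la$'s. But you need each singleton $\{p\}$ (to express $1_{\{p\}}$), and then $\KP^1$ itself cannot be a finite union of singletons, while any infinite $\mathcal{B}_\la$ would meet some singleton. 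Lemma~\ref{lem3.17} only tells you that indecomposable loci are locally constructible; it does not rescue the global construction.

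The paper sidesteps this by working \emph{locally}: rather than a global basis, it fixes an arbitrary finite family of pairwise disjoint indecomposable constructible sets $\mathcal{O}_1,\ldots,\mathcal{O}_k$, defines the finite-dimensional pieces $U_{\mathcal{O}_1\ldots\mathcal{O}_k}$ (span of ordered monomials $1_{\mathcal{O}_1}^{n_1}\cdots 1_{\mathcal{O}_k}^{n_k}$) and $\CF_{\mathcal{O}_1\ldots\mathcal{O}_k}$, and shows that the restriction $\Phi_{\mathcal{O}_1\ldots\mathcal{O}_k}$ is injective by exactly your filtered/graded argument. Since any given $u\in U$ involves only finitely many elements of $\CFi$, and any finite collection of indecomposable constructible sets can be refined to a pairwise disjoint one, every $u$ lies in some $U_{\mathcal{O}_1\ldots\mathcal{O}_k}$; injectivity of $\Phi$ follows. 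Your proof goes through verbatim once you replace the global basis by this element-by-element localization.
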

\begin{proof}
For simplicity of presentation, let
$$U=U(\CFi(\fObj_\A))~\text{and}~\CF=\CF^{\text{KS}}(\fObj_\A).$$
Suppose that $\mathcal{O}_{1},\mathcal{O}_{2},\ldots,\mathcal{O}_{k}$ are indecomposable constructible subsets of $\fObj_\A(\mathbb{K})$ which are pairwise disjoint. It follows that $1_{\mathcal{O}_{1}},1_{\mathcal{O}_{2}},\ldots, 1_{\mathcal{O}_{k}}$ are linearly independent in $\CFi(\fObj_\A)$.

Let $U_{\mathcal{O}_{1}\ldots\mathcal{O}_{k}}$ denote the subspace of $U$ which is spanned by all $1_{\mathcal{O}_{1}}^{n_1}1_{\mathcal{O}_{2}}^{n_2}\ldots1_{\mathcal{O}_{k}}^{n_k}$ for $n_i\in\mathbb{N}$ and $i=1,\ldots,k$.

Define $\CF_{\mathcal{O}_{1}\ldots\mathcal{O}_{n}}$ to be the subalgebra of $\CF$ which is generated by the elements $1_{n_1\mathcal{O}_{1}\oplus n_2\mathcal{O}_{2}\oplus\ldots\oplus n_k\mathcal{O}_{k}}$ of $\CF$, where $n_i\in\mathbb{N}$ for $i=1,2,\ldots,k$.

The homomorphism $\Phi$ induces a homomorphism
$$
\Phi_{\mathcal{O}_{1}\ldots\mathcal{O}_{k}}:U_{\mathcal{O}_{1}\ldots\mathcal{O}_{k}}\rightarrow \CF_{\mathcal{O}_{1}\ldots\mathcal{O}_{k}}
$$
which maps $1_{\mathcal{O}_{1}}^{n_1} 1_{\mathcal{O}_{2}}^{n_2}\ldots1_{\mathcal{O}_{k}}^{n_k}$ to $1_{\mathcal{O}_{1}}^{*n_1}*1_{\mathcal{O}_{2}}^{*n_2}*\ldots*1_{\mathcal{O}_{k}}^{*n_k}$.

First of all, we want to show that $\Phi_{\mathcal{O}_{1}\ldots\mathcal{O}_{k}}$ is injective.

For $m\in\mathbb{N}$, let $U_{\mathcal{O}_{1}\ldots\mathcal{O}_{k}}^{(m)}$ be the subspace of $U$ which is spanned by
$$
\big\{1_{\mathcal{O}_{1}}^{n_1}1_{\mathcal{O}_{2}}^{n_2}\ldots1_{\mathcal{O}_{k}}^{n_k}~|~\sum\limits_{i=1}^{k}n_i\leq m, n_i\geq0 ~\text{for}~i=1,\ldots,k\big\}
$$
Using the PBW Theorem, we obtain that
$$
\big\{1_{\mathcal{O}_{1}}^{n_1}1_{\mathcal{O}_{2}}^{n_2}\ldots1_{\mathcal{O}_{k}}^{n_k}~|~\sum\limits_{i=1}^{k}n_i= m, n_i\geq0 ~\text{for}~i=1,\ldots,k\big\}
$$
is a basis of the $\mathbb{Q}$-vector space $U_{\mathcal{O}_{1}\ldots\mathcal{O}_{k}}^{(m)}/ U_{\mathcal{O}_{1}\ldots\mathcal{O}_{k}}^{(m-1)}$ for $m\geq1$.

Similarly, we define $\CF_{\mathcal{O}_{1}\ldots\mathcal{O}_{k}}^{(m)}$ to be a subspace of $\CF_{\mathcal{O}_{1}\ldots\mathcal{O}_{k}}$ such that each $f\in\CF_{\mathcal{O}_{1}\ldots\mathcal{O}_{k}}^{(m)}$ is of the form $\sum\limits_{i=1}^l c_{i}1_{\mathcal{C}_i}$, where $l\in\mathbb{N}^+$, $c_{i}\in\mathbb{Q}$, $1_{\mathcal{C}_i}\in \CF_{\mathcal{O}_{1}\ldots\mathcal{O}_{k}}$ and $\mathcal{C}_i$ are constructible sets of Krull-Schmidt such that $\gamma(\mathcal{C}_i)\leq m$.

In $\CF^{(m)}/\CF^{(m-1)}$, the set
$$
\{1_{n_1\mathcal{O}_{1}\oplus n_2\mathcal{O}_{2}\oplus\ldots\oplus n_k\mathcal{O}_{k}}~|~\sum\limits_{i=1}^{k}n_i= m, n_i\geq0 ~\text{for}~i=1,\ldots,k\}
$$
is linearly independent by the Krull-Schmidt Theorem.

For each $m\geq1$, $\Phi_{\mathcal{O}_{1}\ldots\mathcal{O}_{k}}$ induce a map
$$
\Phi_{\mathcal{O}_{1}\ldots\mathcal{O}_{k}}^{(m)}:U_{\mathcal{O}_{1}\ldots\mathcal{O}_{k}}^{(m)}/ U_{\mathcal{O}_{1}\ldots\mathcal{O}_{k}}^{(m-1)}\rightarrow\CF_{\mathcal{O}_{1}\ldots\mathcal{O}_{k}}^{(m)}/ \CF_{\mathcal{O}_{1}\ldots\mathcal{O}_{k}}^{(m-1)}
$$
which maps $1_{\mathcal{O}_{1}}^{n_1}1_{\mathcal{O}_{2}}^{n_2}\ldots1_{\mathcal{O}_{k}}^{n_k}$ to $n_1!n_2!\ldots n_k! 1_{n_1\mathcal{O}_{1}\oplus n_2\mathcal{O}_{2}\oplus\ldots\oplus n_k\mathcal{O}_{k}}$ (also see Corollary~\ref{cor2}), where $\sum\limits_{i=1}^{k}n_i= m$ and $m_i\geq0$. From this we know that $\Phi_{\mathcal{O}_{1}\ldots\mathcal{O}_{k}}^{(m)}$ is injective for all $m\in\mathbb{N}$.
Obviously, both $U_{\mathcal{O}_{1}\mathcal{O}_{2}\ldots\mathcal{O}_{n}}$ and $\CF_{\mathcal{O}_{1}\ldots\mathcal{O}_{n}}$ are filtered. From the properties of filtered algebra, we know that $\Phi_{\mathcal{O}_{1}\ldots\mathcal{O}_{k}}$ is injective. Hence $\Phi :U\rightarrow\CF$ is injective.

Finally, we show that $\Phi$ is surjective by induction. When $m=1$, the statement is trivial. Then we assume that every constructible function $f=\sum\limits_{i=1}^t a_{i}1_{\mathcal{Q}_i}$ lies in $\Im(\Phi)$, where $a_{i}\in\mathbb{Q}$ and $\mathcal{Q}_i$ are constructible sets of stratified Krull-Schmidt with $\gamma(\mathcal{Q}_i)<m$.

Let $n_1+n_2+\ldots+n_k=m$ and $n_i\in\mathbb{N}$ for $1\leq i\leq k$. Then
$$
\Phi(1_{\mathcal{O}_{1}}^{n_1}1_{\mathcal{O}_{2}}^{n_2}\ldots1_{\mathcal{O}_{k}}^{n_k})= 1_{\mathcal{O}_{1}}^{*n_1}*1_{\mathcal{O}_{2}}^{*n_2}*\ldots*1_{\mathcal{O}_{k}}^{*n_k}
$$
$$
=n_{1}!n_{2}!\ldots n_{n}!1_{n_{1}\mathcal{O}_{1}\oplus n_{2}\mathcal{O}_{2}\oplus\ldots\oplus n_{k}\mathcal{O}_{k}}+ \sum\limits_{j=1}^s b_{j}1_{\mathcal{P}_{j}},
$$
where $b_j\in\mathbb{Q}$ and $\mathcal{P}_{j}$ are constructible sets of stratified Krull-Schmidt with $\gamma(\mathcal{P}_{j})<m$. By the hypothesis, $\sum\limits_{j=1}^{s}b_{j}1_{\mathcal{P}_{j}}\in\Im(\Phi)$. Hence $1_{n_{1}\mathcal{O}_{1}\oplus n_{2}\mathcal{O}_{2}\oplus\ldots\oplus n_{k}\mathcal{O}_{k}}$ lies in $\Im(\Phi)$. The algebra $\CF$ is generated by all $1_{n_{1}\mathcal{O}_{1}\oplus\ldots\oplus n_{k}\mathcal{O}_{k}}$, which proves that $\Phi$ is surjective, the proof is finished.
\end{proof}

\section{Comultiplication and Green's formula}

\subsection{Comultiplication}
We now turn to define a comultiplication on the algebra $\CF^{\text{KS}}(\fObj_\A)$. For $f,g\in\CF(\fObj_\A)$, $f\otimes g$ is define by $f\otimes g([X],[Y])=f([X])g([Y])$ for $([X],[Y])\in (\fObj_\A\times\fObj_\A)(\mathbb{K})= \fObj_\A(\mathbb{K})\times\fObj_\A(\mathbb{K})$ (see \cite[Difinition~4.1]{joyceA2}). Let $X\xrightarrow{f}Y\xrightarrow{g} Z$ be a conflation in $\A$. Recall that the map $p_2:\Aut(X\xrightarrow{f}Y\xrightarrow{g}Z)\rightarrow\Aut(X)\times\Aut(Z)$ is defined by $(a_1,a_2,a_3)\mapsto (a_1,a_3)$ and $\text{Ker}p_2=1$. Therefore the pushforward of $\mathbf{\pi}_l\times\mathbf{\pi}_r$ is well-defined.

The following definitions are related to \cite[Section~6]{dxx10} and \cite[Defintion~4.16]{joyceA2}.
\begin{dfn}\label{def4.1}
From now on, assume that $\mathbf{\pi}_m:\fExact_\A\rightarrow\fObj_\A$ is of finite type. Then we have the following diagram
\begin{equation*}
\begin{gathered}
\CF^{\text{KS}}(\fObj_\A\times\fObj_\A)\xleftarrow{(\mathbf{\pi}_l\times\mathbf{\pi}_r)_!}
\CF^{\text{KS}}(\fExact_\A)\xleftarrow{(\mathbf{\pi}_m)^{*}}\CF^{\text{KS}}(\fObj_\A).
\end{gathered}
\end{equation*}
The comultiplication
\begin{equation*}
\Delta:\CF^{\text{KS}}(\fObj_\A)\rightarrow\CF^{\text{KS}}(\fObj_\A\times\fObj_\A)
\end{equation*}
is defined by $\Delta=(\mathbf{\pi}_l\times\mathbf{\pi}_r)_!\circ(\mathbf{\pi}_m)^*$, where $\CF^{\text{KS}}(\fObj_\A\times\fObj_\A)$ is regarded as a topological completion of $\CF^{\text{KS}}(\fObj_\A)\otimes\CF^{\rm KS}(\fObj_\A)$.

The counit $\varepsilon:\CF^{\text{KS}}(\fObj_\A)\rightarrow\mathbb{Q}$ maps $f$ to $f([0])$.
\end{dfn}

Note that $\Delta$ is a $\mathbb{Q}$-linear map since $(\mathbf{\pi}_l\times\mathbf{\pi}_r)_!$ and $(\mathbf{\pi}_m)^*$ are $\mathbb{Q}$-linear map.

\begin{dfn}
Let $\alpha=[A],\beta=[B]\in\fObj_\A(\mathbb{K})$ and $\mathcal{O}\subseteq\fObj_\A(\mathbb{K})$ be a constructible set of stratified Krull-Schmidt, define
$$
h^{\beta\alpha}_{\mathcal{O}}=\Delta(1_{\mathcal{O}})([A],[B]).
$$
Let $\mathcal{O}_1$ and $\mathcal{O}_2\subseteq\fObj_\A(\mathbb{K})$ be constructible sets, define
$$
g^{\alpha}_{\mathcal{O}_2\mathcal{O}_1}=1_{\mathcal{O}_1}*1_{\mathcal{O}_2}(\alpha).
$$
\end{dfn}

Because $\Delta(1_{\mathcal{O}})$ is a constructible function, $\Delta(1_{\mathcal{O}})= \sum\limits_{i=1}^{n}h^{\beta_i\alpha_i}_{\mathcal{O}} 1_{\mathcal{O}_i}$ for some $\alpha_i,\beta_i\in\fObj_\A(\mathbb{K})$ and $n\in \mathbb{N}$, where $\mathcal{O}_i$ are constructible subsets of $\fObj_\A(\mathbb{K})\times\fObj_\A(\mathbb{K})$.

\begin{lem}\label{lem4.2}
Let $X$, $Y$, $Z\in\Obj(\A)$. If $X\oplus Z$ is not isomorphic to $Y$, then $\Delta(1_{[Y]})([X],[Z])=0$.
\end{lem}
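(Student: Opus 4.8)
The plan is to unwind the definition of $\Delta$ and reduce the statement to a vanishing property of the weight function of $\mathbf{\pi}_l\times\mathbf{\pi}_r$. First I would compute $(\mathbf{\pi}_m)^*(1_{[Y]})$, which is the characteristic function of the locally constructible set $(\mathbf{\pi}_m)_*^{-1}([Y])$ of conflations whose middle term is isomorphic to $Y$. Pushing forward along $\mathbf{\pi}_l\times\mathbf{\pi}_r$ (well defined because $\chi(\mathrm{Ker}\,p_2)=1$) and evaluating at $([X],[Z])$ then expresses the value as
\[
\Delta(1_{[Y]})([X],[Z])=\sum_{a\neq 0}a\,\chi^{\na}\big(\{[X\xrightarrow{f}Y\xrightarrow{g}Z]:m_{\mathbf{\pi}_l\times\mathbf{\pi}_r}([X\xrightarrow{f}Y\xrightarrow{g}Z])=a\}\big),
\]
the sum running over isomorphism classes of conflations with the three fixed types $X,Y,Z$ and $m_{\mathbf{\pi}_l\times\mathbf{\pi}_r}(w)=\chi\big((\Aut(X)\times\Aut(Z))/p_2(\Aut(w))\big)$. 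Hence it suffices to prove that $m_{\mathbf{\pi}_l\times\mathbf{\pi}_r}(w)=0$ for every conflation $w:X\xrightarrow{f}Y\xrightarrow{g}Z$ whenever $Y\not\cong X\oplus Z$; then every set above is empty and the sum vanishes.

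To prove this I would run the maximal-torus argument of Lemma \ref{lem2}, now on the outer groups. By \eqref{equ4} the kernel of $p_2$ is a linear space, so $\rk\,p_2(\Aut(w))=\rk\Aut(X\xrightarrow{f}Y\xrightarrow{g}Z)\le\gamma(X)+\gamma(Z)$. If this inequality is strict, a maximal torus of $\Aut(X)\times\Aut(Z)$ acts on $(\Aut(X)\times\Aut(Z))/p_2(\Aut(w))$ with fibres $(\mathbb{K}^*)^k$ for some $k\ge 1$, exactly as in Lemma \ref{lem2}, forcing $m_{\mathbf{\pi}_l\times\mathbf{\pi}_r}(w)=0$. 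So the only thing left is to rule out the full-rank case $\rk\,p_2(\Aut(w))=\gamma(X)+\gamma(Z)$ under the hypothesis $Y\not\cong X\oplus Z$.

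In the full-rank case $p_2(\Aut(w))$ contains a maximal torus $S$ of $\Aut(X)\times\Aut(Z)$; since the scalars $s\cdot\id_X$ and $t\cdot\id_Z$ are central they lie in $S$, so for any $s,t\in\mathbb{K}^*$ the pair $(s\cdot\id_X,t\cdot\id_Z)$ lifts to some $a_2\in\Aut(Y)$ with $a_2f=sf$ and $ga_2=tg$. Choosing $s\neq t$, the relations $(a_2-s)f=0$ and $g(a_2-t)=0$ give $\mathrm{Im}(a_2-t)\subseteq\mathrm{Ker}(g)=\mathrm{Im}(f)$ and $(a_2-s)|_{\mathrm{Im}(f)}=0$, whence $(a_2-s)(a_2-t)=0$. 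Then $e=(s-t)^{-1}(a_2-t)$ is an idempotent with $ge=0$ and $e|_{\mathrm{Im}(f)}=\id$; using that $\A$ is idempotent complete I would conclude $Y=\mathrm{Im}(e)\oplus\mathrm{Ker}(e)$ with $\mathrm{Im}(e)=\mathrm{Im}(f)\cong X$, and, since $g(1-e)=g$ is a deflation restricting to an isomorphism on $\mathrm{Ker}(e)$, $\mathrm{Ker}(e)\cong Z$. Thus the conflation splits and $Y\cong X\oplus Z$, contradicting the hypothesis; so the full-rank case never occurs and $m_{\mathbf{\pi}_l\times\mathbf{\pi}_r}(w)=0$, completing the argument.

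The step I expect to demand the most care is the idempotent-splitting computation of the last paragraph, namely making $(a_2-s)(a_2-t)=0$ and the identification of the two summands fully rigorous in the exact category through kernels, images, and idempotent completeness, rather than simply reading off eigenspaces as one would in a module category. This is exactly where the argument genuinely uses the exact-category hypotheses, in parallel with the idempotent bookkeeping already carried out in Lemma \ref{lem2} and Lemma \ref{rem3.11}.
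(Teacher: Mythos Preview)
Your proposal is correct and follows essentially the same line as the paper: reduce to showing $m_{\mathbf{\pi}_l\times\mathbf{\pi}_r}(w)=0$, use the torus argument to force $\rk\,\text{Im}\,p_2=\gamma(X)+\gamma(Z)$, then lift the central scalar pair $(s\cdot\id_X,t\cdot\id_Z)$ with $s\neq t$ to $\Aut(Y)$ and produce a splitting. The only cosmetic difference is the last step: the paper factors $\tau-t_2\cdot\id_Y=fh$ through $\text{Ker}\,g$ and reads off the retraction $hf=(t_1-t_2)\id_X$ directly, whereas you package the same computation as the idempotent $e=(s-t)^{-1}(a_2-t)$ and invoke idempotent completeness; these are equivalent, and your own caveat about doing the idempotent bookkeeping carefully in an exact category (factoring through kernels rather than speaking of ``restriction to $\mathrm{Im}(f)$'') is exactly right.
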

\begin{proof}
If $\Delta(1_{[Y]})([X],[Z])\neq0$, there exists a conflation $X\xrightarrow{f}Y\xrightarrow{g}Z$ in $\A$ such that $m_{\pi_l\times\pi_r}([X\xrightarrow{f}Y\xrightarrow{g}Z])\neq0$. Recall that
$$
m_{\pi_l\times\pi_r}([X\xrightarrow{f}Y\xrightarrow{g}Z])=\chi\big((\Aut(X)\times\Aut(Z))/\text{Im}p_2\big).
$$
If $\rk~\text{Im}p_{2}<\rk\big(\Aut(X)\times\Aut(Z)\big)$, the fibre of the action of a maximal torus of $\Aut(X)\times\Aut(Z)$ on $(\Aut(X)\times\Aut(Z))/\text{Im}p_2$ is $(\mathbb{K}^*)^{l}$ for $l>0$. Then $\chi\big((\Aut(X)\times\Aut(Z))/\text{Im}p_2\big)=0$, which is a contradiction. Hence $\rk\big(\Aut(X)\times\Aut(Z)\big)=\rk~\text{Im}p_{2}$.

Assume that $\rk\Aut(X)=n_1$, $\rk\Aut(Z)=n_2$ and $\rk\Aut(Y)=n$ for some positive integers $n_1$, $n_2$ and $n$. Note that $\textbf{D}_{n_1}\times\textbf{D}_{n_2}$ is a maximal torus of $\Aut(X) \times\Aut(Z)$. Because $\rk\big(\Aut(X)\times\Aut(Z)\big)=\rk~\text{Im}(p_{2})$, each maximal torus of $\text{Im}p_2$ is also a maximal torus of $\Aut(X) \times\Aut(Z)$. Therefore every maximal torus of $\text{Im}p_2$ and $\textbf{D}_{n_1}\times\textbf{D}_{n_2}$ are conjugate. For simplicity, we can assume that $\textbf{D}_{n_1}\times\textbf{D}_{n_2}$ is a maximal torus of $\text{Im}p_{2}$. For $(t_1I_{n_1},t_2I_{n_2})\in\textbf{D}_{n_1}\times\textbf{D}_{n_2}$, where $t_1\neq t_2$, there exists $\tau\in\Aut(Y)$ such that $(t_1I_{n_1},\tau,t_2I_{n_2})\in\Aut(X\xrightarrow{f}Y\xrightarrow{g}Z)$. Then we have the commutative diagram
\begin{equation*}
\xymatrix{
  X \ar[d]_{t_1I_{n_1}} \ar[r]^{f} & Y \ar[d]_{\tau} \ar[r]^{g} & Z \ar[d]^{t_2I_{n_2}} \\
  X \ar[r]^{f} & Y \ar[r]^{g} & Z   }
\end{equation*}
The morphism $(t_2I_{n_1},t_2I_{n},t_2I_{n_2})$ is also in $\Aut(X\xrightarrow{f}Y\xrightarrow{g}Z)$. The following diagram is commutative
\begin{equation*}
\xymatrix{
  X \ar[d]_{t_2I_{n_1}} \ar[r]^{f} & Y \ar[d]_{t_2I_{n}} \ar[r]^{g} & Z \ar[d]^{t_2I_{n_2}} \\
  X \ar[r]^{f} & Y \ar[r]^{g} & Z   }
\end{equation*}
Consequently $g(\tau-t_2I_{n})=0$. Because $f$ is a kernel of $g$, there exists $h\in\Hom(Y,X)$ such that $\tau-t_2I_{n}=fh$. Then $\tau=fh+t_2I_{n}$. We have
$$
f(t_1I_{n_1})=\tau f=(fh+t_2I_{n}))f,
$$
it follows that
$$
fhf=f(t_1I_{n_1})-(t_2I_{n})f=f(t_1I_{n_1}-t_2I_{n_1}).
$$
Then $hf=(t_1-t_2)I_{n_1}$ since $f$ is an inflation. Let $f^\prime=\frac{1}{t_1-t_2}h$, then $f^\prime f=1_X$. Hence $X$ is isomorphic to a direct summand of $Y$. The proof is completed.
\end{proof}

For an indecomposable object $X\in\Obj(\A)$, direct summands of $X$ are only $X$ and $0$. Thus $\Delta(1_{[X]})=1_{[X]}\otimes1_{[0]}+ 1_{[0]}\otimes1_{[X]}$. It follows that $\Delta(f)=f\otimes1_{[0]}+1_{[0]}\otimes f$ for $f\in\CFi(\fObj_\A)$.

By Lemma \ref{lem4.2}, $h^{\beta\alpha}_{\mathcal{O}}=1$ if $\alpha\oplus\beta\in\mathcal{O}$, and $h^{\beta\alpha}_{\mathcal{O}}=0$ otherwise. Let $\mathcal{O}=n_1\mathcal{O}_1\oplus\ldots\oplus n_m\mathcal{O}_m$ be a constructible set of stratified Krull-Schmidt, where $\mathcal{O}_i$ are indecomposable constructible sets for all $1\leq i\leq m$. By Lemma \ref{lem4.2}, the formula $\Delta(1_{\mathcal{O}})= \sum\limits_{i=1}^{n}h^{\beta_i\alpha_i}_{\mathcal{O}} 1_{\mathcal{O}_i}$ can be written as
$$
\Delta(1_{\mathcal{O}})=\sum\limits_{1\leq i\leq m;0\leq k_i\leq n_i}1_{k_1\mathcal{O}_1\oplus\ldots\oplus k_m\mathcal{O}_m}\otimes 1_{(n_1-k_1)\mathcal{O}_1\oplus\ldots\oplus (n_m-k_m)\mathcal{O}_m}.
$$

Hence we have the following proposition.
\begin{prop}
Let $\mathcal{O}$ be a constructible set of stratified Krull-Schmidt, then $\Delta(1_{\mathcal{O}})\in\CF^{\rm KS}(\fObj_\A)\otimes\CF^{\rm KS}(\fObj_\A)$, i.e., the map
$$
\Delta:\CF^{\rm KS}(\fObj_\A)\rightarrow\CF^{\rm KS}(\fObj_\A)\otimes\CF^{\rm KS}(\fObj_\A))
$$
is well-defined.
\end{prop}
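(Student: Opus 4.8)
The plan is to first reduce to a single Krull--Schmidt stratum by linearity, then read the conclusion off the explicit formula for $\Delta(1_{\mathcal{O}})$ already obtained, the crux being that this formula is a \emph{finite} sum of elementary tensors. By definition a constructible set of stratified Krull--Schmidt is a finite disjoint union $\mathcal{O}=\amalg_{i=1}^{n}\mathcal{Q}_{i}$ with each $\mathcal{Q}_{i}$ of Krull--Schmidt, so $1_{\mathcal{O}}=\sum_{i=1}^{n}1_{\mathcal{Q}_{i}}$; since $\Delta$ is $\Q$-linear and $\CF^{\rm KS}(\fObj_\A)\otimes\CF^{\rm KS}(\fObj_\A)$ is a $\Q$-subspace, it suffices to treat the case $\mathcal{O}=n_{1}\mathcal{O}_{1}\oplus\cdots\oplus n_{m}\mathcal{O}_{m}$ with the $\mathcal{O}_{i}$ indecomposable and, as arranged in Section \ref{sec3.1}, pairwise disjoint.

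For such $\mathcal{O}$ I would invoke the formula
$$
\Delta(1_{\mathcal{O}})=\sum_{1\le i\le m;\,0\le k_{i}\le n_{i}}1_{k_{1}\mathcal{O}_{1}\oplus\cdots\oplus k_{m}\mathcal{O}_{m}}\otimes 1_{(n_{1}-k_{1})\mathcal{O}_{1}\oplus\cdots\oplus (n_{m}-k_{m})\mathcal{O}_{m}},
$$
which rests on Lemma \ref{lem4.2} and the evaluation $h^{\beta\alpha}_{\mathcal{O}}=1$ when $\alpha\oplus\beta\in\mathcal{O}$ and $0$ otherwise. The next step is to observe that each factor $k_{1}\mathcal{O}_{1}\oplus\cdots\oplus k_{m}\mathcal{O}_{m}$ (and likewise the complementary factor) is a nonnegative-multiple direct sum of the indecomposable constructible sets $\mathcal{O}_{i}$, hence is itself a constructible set of Krull--Schmidt with the convention $0\mathcal{O}_{i}=\{[0]\}$; consequently its characteristic function lies in $\CF^{\rm KS}(\fObj_\A)$, and every summand above is a genuine elementary tensor in $\CF^{\rm KS}(\fObj_\A)\otimes\CF^{\rm KS}(\fObj_\A)$.

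The decisive point is finiteness. The index set $\{(k_{1},\dots,k_{m}):0\le k_{i}\le n_{i}\}$ has cardinality $\prod_{i=1}^{m}(n_{i}+1)<\infty$, so $\Delta(1_{\mathcal{O}})$ is a finite $\Q$-linear combination of elementary tensors drawn from $\CF^{\rm KS}(\fObj_\A)\otimes\CF^{\rm KS}(\fObj_\A)$, and therefore lies in the algebraic tensor product and not merely in its topological completion $\CF^{\rm KS}(\fObj_\A\times\fObj_\A)$ in which $\Delta$ was a priori valued (Definition \ref{def4.1}). Combined with the linearity reduction this gives the claim. I do not expect the finiteness bookkeeping to be the real difficulty; rather, the substantive content---already supplied before the statement---is the control of the support of $\Delta(1_{\mathcal{O}})$ via Lemma \ref{lem4.2}, which forces every nonzero value to sit over a pair $([A],[B])$ with $A\oplus B$ an object of $\mathcal{O}$ and pins the coefficient there to the membership indicator, so that no infinite family of Krull--Schmidt strata can contribute.
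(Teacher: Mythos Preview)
Your proposal is correct and follows essentially the same approach as the paper: the paper does not give a separate proof of the proposition but derives the explicit formula for $\Delta(1_{\mathcal{O}})$ immediately before it (via Lemma~\ref{lem4.2} and the computation of $h^{\beta\alpha}_{\mathcal{O}}$) and states the proposition as its consequence. Your reduction from the stratified case to a single Krull--Schmidt stratum by linearity is slightly more explicit than the paper's treatment, but the substance is identical.
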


\subsection{Green's formula on stacks}
Recall that
$$
\int_{x\in S}f(x)=\sum\limits_{a\in f(S)\setminus\{0\}}a\chi^{\na}(f^{-1}(a)\cap S),
$$
where $f$ is a constructible function and $S$ a locally constructible set.

Let $\mathcal{O}_1,\mathcal{O}_2,\mathcal{O}_\rho,\mathcal{O}_\sigma,\mathcal{O}_\epsilon,\mathcal{O}_\tau,\mathcal{O}_{\lambda}$ be constructible sets and $\alpha\in\mathcal{O}_1,\beta\in\mathcal{O}_2,\rho\in\mathcal{O}_\rho, \sigma\in\mathcal{O}_\sigma,\epsilon\in\mathcal{O}_\epsilon, \tau\in\mathcal{O}_\tau, \lambda\in\mathcal{O}_{\lambda}$ such that $\mathcal{O}_\rho\oplus\mathcal{O}_\sigma=\mathcal{O}_1$ and $\mathcal{O}_\epsilon\oplus\mathcal{O}_\tau=\mathcal{O}_2$.

The following theorem is the degenerate form of Green's formula which is related to \cite[Theorem~22]{dxx10}.
\begin{thm}
\label{Greenthm}
Let $\mathcal{O}_1,\mathcal{O}_2$ be constructible subsets of $\fObj_\A(\mathbb{K})$ and $\alpha^\prime,\beta^\prime\in\fObj_\A(\mathbb{K})$, then we have
$$
g^{\alpha^\prime\oplus\beta^\prime}_{\mathcal{O}_2\mathcal{O}_1}=\int_{\rho,\sigma,\epsilon,\tau\in\fObj_\A(\mathbb{K}); \rho\oplus\sigma\in\mathcal{O}_1,\epsilon\oplus\tau\in\mathcal{O}_2} g^{\alpha^\prime}_{\epsilon\rho}g^{\beta^{\prime}}_{\tau\sigma}.
$$
\end{thm}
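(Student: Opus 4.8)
The plan is to evaluate both sides as weighted na\"{\i}ve Euler characteristics of explicit constructible sets and to match them by a torus-localization argument. First I would rewrite the left-hand side: by the lemma identifying $1_{\mathcal{O}_1}\ast 1_{\mathcal{O}_2}([Y])$ with $\chi^{\na}(V(\mathcal{O}_1,\mathcal{O}_2;Y))$,
\[
g^{\alpha'\oplus\beta'}_{\mathcal{O}_2\mathcal{O}_1}=(1_{\mathcal{O}_1}\ast 1_{\mathcal{O}_2})([A'\oplus B'])=\chi^{\na}\big(V(\mathcal{O}_1,\mathcal{O}_2;A'\oplus B')\big),
\]
where $\alpha'=[A']$, $\beta'=[B']$, and the weight $m_{\pi_m}$ is already built into $\chi^{\na}$ through \eqref{formula1}. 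For the right-hand side I would unfold $g^{\alpha'}_{\epsilon\rho}=\chi^{\na}(V([\rho],[\epsilon];A'))$ and $g^{\beta'}_{\tau\sigma}=\chi^{\na}(V([\sigma],[\tau];B'))$ and use the fibration lemma (Lemma \ref{lem2.1}) to rewrite the integral over $S=\{(\rho,\sigma,\epsilon,\tau):\rho\oplus\sigma\in\mathcal{O}_1,\ \epsilon\oplus\tau\in\mathcal{O}_2\}$ as the weighted Euler characteristic of the total space
\[
W=\big\{(\langle R\to A'\to E\rangle,\langle S\to B'\to T\rangle):[R\oplus S]\in\mathcal{O}_1,\ [E\oplus T]\in\mathcal{O}_2\big\},
\]
the weight being $m_{\pi_m}$ of the first conflation times $m_{\pi_m}$ of the second. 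Thus the theorem reduces to the identity $\chi^{\na}(V(\mathcal{O}_1,\mathcal{O}_2;A'\oplus B'))=\chi^{\na}(W)$ of weighted Euler characteristics.

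The bridge between the two is the direct-sum map $\Xi\colon W\to V(\mathcal{O}_1,\mathcal{O}_2;A'\oplus B')$ sending $(\langle R\to A'\to E\rangle,\langle S\to B'\to T\rangle)$ to $\langle R\oplus S\to A'\oplus B'\to E\oplus T\rangle$, which is manifestly a morphism of constructible sets landing in $V$. The key step is to show that, after localization, $\Xi$ accounts for all of $\chi^{\na}(V)$. For this I would let $\mathbb{K}^*$ act on $V(\mathcal{O}_1,\mathcal{O}_2;A'\oplus B')$ through the one-parameter subgroup $\lambda(t)=\mathrm{diag}(1_{A'},t\,1_{B'})\in\Aut(A'\oplus B')$ by $t\cdot\langle f,g\rangle=\langle\lambda(t)f,\,g\lambda(t)^{-1}\rangle$. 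Since this action merely conjugates the relevant automorphism groups, it preserves the stratification by the value of $m_{\pi_m}$, and since every non-fixed orbit is a quotient of $\mathbb{K}^*$ and hence has vanishing Euler characteristic --- the same mechanism used in Lemma \ref{lem2} and Lemma \ref{lem4.2} --- localization gives $\chi^{\na}(V)=\chi^{\na}(V^{\mathbb{K}^*})$ with weights preserved. A class $\langle f,g\rangle$ is fixed precisely when $\lambda$ lifts to a family of conflation automorphisms $(u_t,\lambda(t),v_t)$; forming $s=(t-1)^{-1}(u_t-1)\in\End(C)$ for $t\neq 0,1$ produces, exactly as in Lemma \ref{rem3.11}, an idempotent satisfying $fs=\mathrm{diag}(0,1_{B'})f$. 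Idempotent completeness then splits $C=C_0\oplus C_1$ and dually $D=D_0\oplus D_1$, so that $f=\mathrm{diag}(f_1,f_2)$, $g=\mathrm{diag}(g_1,g_2)$ with $C_0\to A'\to D_0$ and $C_1\to B'\to D_1$ conflations; that is, $V^{\mathbb{K}^*}$ is precisely the image of $\Xi$.

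Finally I would match the weights. On $V^{\mathbb{K}^*}$ the weight is $m_{\pi_m}$ of the split conflation $R\oplus S\to A'\oplus B'\to E\oplus T$, computed from $\Aut(A'\oplus B')$, whereas on $W$ the weight is the product of the two factor weights, computed from $\Aut(A')$ and $\Aut(B')$. Reconciling these is the main obstacle: when $A'$ and $B'$ share indecomposable summands, $\Aut(A'\oplus B')$ is strictly larger than $\Aut(A')\times\Aut(B')$, and a single split conflation can arise from decomposing along $A'\oplus B'$ in several ways. I expect to control this by the factorizations \eqref{equ1} and \eqref{equ2}: the unipotent (off-diagonal $\Hom$) directions in $\Aut(A'\oplus B')/\mathrm{Im}\,p_1$ contribute trivially to $\chi$, and the residual combinatorial factor $\prod_i m_i!$ is exactly compensated by the multiplicity with which distinct quadruples $(\rho,\sigma,\epsilon,\tau)\in S$ and conflation-pairs in $W$ map to the same fixed point. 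Carrying out this bookkeeping stratum by stratum over the values of $m_{\pi_m}$, and invoking Lemma \ref{lem4.2} to see that $\Xi$ respects the end-class constraints, yields $\chi^{\na}(V)=\chi^{\na}(W)$ and hence the asserted equality.
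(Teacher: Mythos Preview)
Your approach is parallel to the paper's, but the paper's execution is simpler in two respects, and your final paragraph reflects a misconception that the paper's argument sidesteps entirely.

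First, the paper immediately reduces to the case where $\mathcal{O}_1=\{\alpha\}$ and $\mathcal{O}_2=\{\beta\}$ are single points, via the integral decomposition $g^{\alpha'\oplus\beta'}_{\mathcal{O}_2\mathcal{O}_1}=\int_{\alpha\in\mathcal{O}_1,\beta\in\mathcal{O}_2}g^{\alpha'\oplus\beta'}_{\beta\alpha}$ from the proof of Lemma~\ref{lem1}. The right-hand side then becomes a \emph{finite} sum over the finitely many direct-sum decompositions $\alpha=\rho\oplus\sigma$, $\beta=\epsilon\oplus\tau$, and no fibration over your set $S$ is needed.

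Second, rather than setting up a $\mathbb{K}^*$-action on $V$ and localizing, the paper simply invokes Lemma~\ref{rem3.11}: any conflation $A\to A'\oplus B'\to B$ with $m_{\pi_m}\neq 0$ already has block-diagonal form, so on the complement of the image of the direct-sum map $i$ one has $m_{\pi_m}\equiv 0$, whence $\chi^{\na}(V\setminus\mathrm{Im}\,i)=0$ by~\eqref{formula1}. Your localization is a legitimate alternative route to the same conclusion, but it is redundant once Lemma~\ref{rem3.11} is available.

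The real detour is your last paragraph. There is no weight discrepancy to reconcile and no multiplicity to compensate: the direct-sum map $i$ (your $\Xi$) is \emph{injective} on equivalence classes. If two block-diagonal conflations are equivalent via some $(a,1_{A'\oplus B'},b)$, then comparing the $A'$- and $B'$-components of $f=f'a$ and using that the factor inflations are monic forces the off-diagonal blocks of $a$ to vanish; dually for $b$. Hence the two pairs of factor conflations were already equivalent. Since \eqref{formula1} is a consequence of the fibration lemma rather than an extra weighting, $\chi^{\na}$ is intrinsic to the constructible set, and injectivity of $i$ gives $\chi^{\na}(W)=\chi^{\na}(\mathrm{Im}\,i)$ directly---this is exactly how the paper concludes. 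The factors $\prod_i m_i!$ and the off-diagonal unipotents you anticipate never enter.
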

\begin{proof}
By the proof of Lemma \ref{lem1},  $g^{\alpha^\prime\oplus\beta^\prime}_{\mathcal{O}_2\mathcal{O}_1}=\int_{\alpha\in\mathcal{O}_1,\beta\in\mathcal{O}_2} g^{\alpha^\prime\oplus\beta^\prime}_{\beta\alpha}$. It suffices to prove the following formula
$$
g^{\alpha^\prime\oplus\beta^\prime}_{\beta\alpha}=\int_{\rho,\sigma,\epsilon,\tau\in\fObj_\A(\mathbb{K}); \rho\oplus\sigma=\alpha,\epsilon\oplus\tau=\beta}g^{\alpha^\prime}_{\epsilon\rho}g^{\beta^{\prime}}_{\tau\sigma}.
$$

Suppose that $[A]=\alpha,[B]=\beta,[A^{\prime}]=\alpha^{\prime},[B^{\prime}]=\beta^{\prime},[C]=\rho,[D]=\sigma, [E]=\epsilon$ and $[F]=\tau$ for $A,B,C,D,E,F\in\text{Obj}(\A)$. There are finitely many $(\rho,\sigma)$ and $(\epsilon,\tau)$ such that $\rho\oplus\sigma=\alpha$ and $\epsilon\oplus\tau=\beta$. The morphism
$$
i:\bigcup\limits_{[C],[D],[E],[F];\atop[C\oplus D]=[A],[E\oplus F]=[B]}V([C],[E];A^\prime)\times V([D],[F];B^{\prime})\rightarrow V([A],[B];A^\prime\oplus B^\prime)
$$
is defined by
$$
(\langle C\xrightarrow{f_1}A^\prime\xrightarrow{g_1}E\rangle, \langle D\xrightarrow{f_2}B^\prime\xrightarrow{g_2}F\rangle)\mapsto \langle C\oplus D\xrightarrow{f}A^\prime\oplus B^\prime\xrightarrow{g}E\oplus F\rangle,
$$
where $f=\left(
\begin{array}{cc}
f_1&0\\
0&f_2
\end{array}
\right)$ and $g=\left(
\begin{array}{cc}
g_1&0\\
0&g_2
\end{array}
\right)$. Because both $C\xrightarrow{f_1}A^\prime\xrightarrow{g_1}E$ and $D\xrightarrow{f_2}B^\prime\xrightarrow{g_2}F$ are conflations, $C\oplus D\xrightarrow{f} A^\prime\oplus B^\prime\xrightarrow{g}E\oplus F$ is a conflation by \cite[Proposition 2.9]{buhler10}. Hence the morphism is well-defined. Note that $i$ is injective and $g^{\alpha^\prime}_{\epsilon\rho}g^{\beta^{\prime}}_{\tau\sigma}=\chi^{\na}(V([C],[E];A^\prime)\times V([D],[F];B^{\prime}))$.

By \cite[Lemma 4.2]{joyceJLMS06}, we have
$$
\chi^{\na}(V([A],[B];A^\prime\oplus B^\prime))=\chi^{\na}(\text{Im}i)+\chi^{\na}\big(V([A],[B];A^\prime\oplus B^\prime) \setminus\text{Im}i\big).
$$
According to Lemma \ref{rem3.11}, if $m_{\pi_m}([A\xrightarrow{f}A^\prime\oplus B^\prime\xrightarrow{g}B])\neq0$, then there exist two conflations $C\xrightarrow{f_1}A^\prime\xrightarrow{g_1}E$ and $D\xrightarrow{f_2}B^\prime\xrightarrow{g_2}F$ in $\A$ such that $A\cong C\oplus D$, $B\cong E\oplus F$, $f=\left(
\begin{array}{cc}
f_1&0\\
0&f_2
\end{array}
\right)$ and $g=\left(
\begin{array}{cc}
g_1&0\\
0&g_2
\end{array}
\right)$. Thus
$$
m_{\pi_{m}}([A\xrightarrow{f}A^\prime\oplus B^\prime\xrightarrow{g}B])=0
$$
for any $\langle A\xrightarrow{f}A^\prime\oplus B^\prime\xrightarrow{g}B\rangle \in V([A],[B];A^\prime\oplus B^\prime)\setminus \text{Im}i$. Using (\ref{formula1}), it follows that $\chi^{\na}(V([A],[B];A^\prime\oplus B^\prime)\setminus\text{Im}i)=0$. Hence
\begin{eqnarray*}
g^{\alpha^\prime\oplus\beta^\prime}_{\beta\alpha}=\chi^{\na}(V([A],[B];A^\prime\oplus B^\prime))=\chi^{\na}(\text{Im}i)\\
=\int_{\rho,\sigma,\epsilon,\tau\in\fObj_\A(\mathbb{K}); \rho\oplus\sigma=\alpha,\epsilon\oplus\tau=\beta}g^{\alpha^\prime}_{\epsilon\rho}g^{\beta^{\prime}}_{\tau\sigma}.
\end{eqnarray*}
This completes the proof.
\end{proof}

For all $f_1,f_2,g_1,g_2\in\CF^{\rm KS}(\fObj_\A)$, define $(f_1\otimes g_1)*(f_2\otimes g_2)=(f_1*f_2)\otimes(g_1*g_2)$. Using Green's formula, we have the following theorem due to \cite[Theorem~24]{dxx10}.
\begin{thm}\label{thm4.6}
The map $\Delta:\CF^{\rm KS}(\fObj_\A)\rightarrow\CF^{\rm KS}(\fObj_\A)\otimes\CF^{\rm KS}(\fObj_\A)$ is an algebra homomorphism.
\end{thm}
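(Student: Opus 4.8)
The plan is to verify the two defining properties of an algebra homomorphism, namely that $\Delta$ is multiplicative and preserves the unit. Since $\Delta$ is $\Q$-linear and $*$ is bilinear, and $\CF^{\rm KS}(\fObj_\A)$ is spanned by the characteristic functions $1_{\mathcal{O}}$, it suffices to establish
$$\Delta(1_{\mathcal{O}_1}*1_{\mathcal{O}_2})=\Delta(1_{\mathcal{O}_1})*\Delta(1_{\mathcal{O}_2})$$
for all constructible sets $\mathcal{O}_1,\mathcal{O}_2$ of stratified Krull--Schmidt. The left-hand side is meaningful because $1_{\mathcal{O}_1}*1_{\mathcal{O}_2}$ again lies in $\CF^{\rm KS}(\fObj_\A)$ by Proposition~\ref{prop3}. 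First I would fix an arbitrary pair $(\alpha',\beta')=([A'],[B'])\in\fObj_\A(\mathbb{K})\times\fObj_\A(\mathbb{K})$, evaluate both sides there, and translate everything into the structure constants $g$ and $h$.

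For the left-hand side the crucial input is the clean evaluation formula furnished by Lemma~\ref{lem4.2}: since $h^{\beta\alpha}_{\mathcal{O}}=1$ precisely when $\alpha\oplus\beta\in\mathcal{O}$ and $0$ otherwise, we get $\Delta(1_{\mathcal{O}})([A'],[B'])=1_{\mathcal{O}}([A'\oplus B'])$, and hence by linearity $\Delta(f)([A'],[B'])=f([A'\oplus B'])$ for every $f\in\CF^{\rm KS}(\fObj_\A)$. Applying this to $f=1_{\mathcal{O}_1}*1_{\mathcal{O}_2}$ yields
$$\Delta(1_{\mathcal{O}_1}*1_{\mathcal{O}_2})([A'],[B'])=(1_{\mathcal{O}_1}*1_{\mathcal{O}_2})([A'\oplus B'])=g^{\alpha'\oplus\beta'}_{\mathcal{O}_2\mathcal{O}_1}.$$

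For the right-hand side I would expand $\Delta(1_{\mathcal{O}_1})$ and $\Delta(1_{\mathcal{O}_2})$ using the explicit description recorded just after Lemma~\ref{lem4.2}, apply the multiplication rule $(f_1\otimes g_1)*(f_2\otimes g_2)=(f_1*f_2)\otimes(g_1*g_2)$, and evaluate at $([A'],[B'])$. Writing the two comultiplications as constructible families supported on pairs with $\rho\oplus\sigma\in\mathcal{O}_1$ and $\epsilon\oplus\tau\in\mathcal{O}_2$, and recognising $(1_{[\rho]}*1_{[\epsilon]})([A'])=g^{\alpha'}_{\epsilon\rho}$ together with $(1_{[\sigma]}*1_{[\tau]})([B'])=g^{\beta'}_{\tau\sigma}$, this produces
$$\big(\Delta(1_{\mathcal{O}_1})*\Delta(1_{\mathcal{O}_2})\big)([A'],[B'])=\int_{\rho,\sigma,\epsilon,\tau\in\fObj_\A(\mathbb{K});\ \rho\oplus\sigma\in\mathcal{O}_1,\ \epsilon\oplus\tau\in\mathcal{O}_2}g^{\alpha'}_{\epsilon\rho}g^{\beta'}_{\tau\sigma}.$$
The two values then coincide by Theorem~\ref{Greenthm}, which is exactly the asserted identity $g^{\alpha'\oplus\beta'}_{\mathcal{O}_2\mathcal{O}_1}=\int g^{\alpha'}_{\epsilon\rho}g^{\beta'}_{\tau\sigma}$. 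Preservation of the unit is immediate from the same evaluation formula, since $\Delta(1_{[0]})([A'],[B'])=1_{[0]}([A'\oplus B'])$ forces $\Delta(1_{[0]})=1_{[0]}\otimes 1_{[0]}$.

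The hardest part will not be conceptual, because the substantive analytic content---that the pushforward along $\mathbf{\pi}_l\times\mathbf{\pi}_r$ of the split extensions accounts for the whole of $V([A],[B];A'\oplus B')$ up to $\chi^{\na}$-negligible pieces---has already been packaged into Green's formula. What requires care here is the bookkeeping: I must check that the bilinear multiplication on the completed tensor product $\CF^{\rm KS}(\fObj_\A)\otimes\CF^{\rm KS}(\fObj_\A)$ really unwinds into the double constructible integral, so that every pointwise value is a finite sum and the passage to structure constants is legitimate, and that the order of subscripts is kept consistent with the convention $g^{\alpha}_{\mathcal{O}_2\mathcal{O}_1}=1_{\mathcal{O}_1}*1_{\mathcal{O}_2}(\alpha)$ throughout the computation.
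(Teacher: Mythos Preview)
Your proposal is correct and follows essentially the same route as the paper: both reduce the multiplicativity of $\Delta$ on generators $1_{\mathcal{O}_1},1_{\mathcal{O}_2}$ to the degenerate Green identity of Theorem~\ref{Greenthm}. The only cosmetic difference is that you evaluate both sides pointwise at $([A'],[B'])$ via the formula $\Delta(f)([A'],[B'])=f([A'\oplus B'])$, whereas the paper expands each side as a finite sum $\sum_{\alpha',\beta'}(\cdots)\,1_{\mathcal{O}_{\alpha'}}\otimes 1_{\mathcal{O}_{\beta'}}$ and compares coefficients; these are equivalent bookkeeping devices, and your explicit check of unit preservation is a small addition the paper leaves implicit.
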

\begin{proof}
The proof is similar to the one in \cite[Theorem 24]{dxx10}. Let $\mathcal{O}_1, \mathcal{O}_2\in\fObj_\A(\mathbb{K})$ be constructible sets of stratified Krull-Schmidt. Then
\begin{eqnarray*}
\Delta(1_{\mathcal{O}_1}*1_{\mathcal{O}_2})=\Delta(\sum\limits_{\lambda}g_{\mathcal{O}_2\mathcal{O}_1}^{\lambda}1_{\mathcal{O}_\lambda}) =\sum\limits_{\lambda}g_{\mathcal{O}_2\mathcal{O}_1}^{\lambda}\Delta(1_{\mathcal{O}_\lambda})\\
=\sum\limits_{\lambda} g_{\mathcal{O}_2\mathcal{O}_1}^{\lambda}(\sum\limits_{\alpha^{\prime},\beta^{\prime}} h_{\mathcal{O}_{\lambda}}^{\beta^{\prime}\alpha^{\prime}}1_{\mathcal{O}_{\alpha^{\prime}}}\otimes1_{\mathcal{O}_{\beta^{\prime}}}) =\sum\limits_{\alpha^{\prime},\beta^{\prime}}g_{\mathcal{O}_2\mathcal{O}_1}^{\alpha^{\prime}\oplus\beta^{\prime}} 1_{\mathcal{O}_{\alpha^{\prime}}}\otimes1_{\mathcal{O}_{\beta^{\prime}}},
\end{eqnarray*}

\begin{eqnarray*}
\Delta(1_{\mathcal{O}_1})*\Delta(1_{\mathcal{O}_2})=(\sum\limits_{\rho,\sigma}h_{\mathcal{O}_1}^{\sigma\rho} 1_{\mathcal{O}_{\rho}}\otimes1_{\mathcal{O}_{\sigma}})*(\sum\limits_{\epsilon,\tau}h_{\mathcal{O}_2}^{\tau\epsilon} 1_{\mathcal{O}_{\epsilon}}\otimes1_{\mathcal{O}_{\tau}})\\
=\sum\limits_{\rho,\sigma,\epsilon,\tau}h_{\mathcal{O}_1}^{\sigma\rho}h_{\mathcal{O}_2}^{\tau\epsilon} (1_{\mathcal{O}_{\rho}}*1_{\mathcal{O}_{\epsilon}})\otimes(1_{\mathcal{O}_{\sigma}}*1_{\mathcal{O}_{\tau}})\\
= \sum\limits_{\rho,\sigma,\epsilon,\tau}h_{\mathcal{O}_1}^{\sigma\rho}h_{\mathcal{O}_2}^{\tau\epsilon} (\sum\limits_{\alpha^{\prime},\beta^{\prime}}g_{\mathcal{O}_{\epsilon}\mathcal{O}_{\rho}}^{\alpha^{\prime}} g_{\mathcal{O}_{\tau}\mathcal{O}_{\sigma}}^{\beta^{\prime}}1_{\mathcal{O}_{\alpha^{\prime}}}\otimes 1_{\mathcal{O}_{\beta^{\prime}}})\\
=\sum\limits_{\alpha^{\prime},\beta^{\prime}}(\sum\limits_{\rho,\sigma,\epsilon,\tau}h_{\mathcal{O}_1}^{\sigma\rho} h_{\mathcal{O}_2}^{\tau\epsilon}g_{\mathcal{O}_{\epsilon}\mathcal{O}_{\rho}}^{\alpha^{\prime}} g_{\mathcal{O}_{\tau}\mathcal{O}_{\sigma}}^{\beta^{\prime}}1_{\mathcal{O}_{\alpha^{\prime}}}\otimes 1_{\mathcal{O}_{\beta^{\prime}}}).
\end{eqnarray*}

According to Theorem \ref{Greenthm}, it follows that
$$
\sum\limits_{\rho,\sigma,\epsilon,\tau}h_{\mathcal{O}_1}^{\sigma\rho} h_{\mathcal{O}_2}^{\tau\epsilon}g_{\mathcal{O}_{\epsilon}\mathcal{O}_{\rho}}^{\alpha^{\prime}} g_{\mathcal{O}_{\tau}\mathcal{O}_{\sigma}}^{\beta^{\prime}}=g_{\mathcal{O}_2\mathcal{O}_1}^{\alpha^{\prime}\oplus\beta^{\prime}}.
$$
Therefore $\Delta(1_{\mathcal{O}_1}*1_{\mathcal{O}_2})=\Delta(1_{\mathcal{O}_1})*\Delta(1_{\mathcal{O}_2})$. We have thus proved the theorem.
\end{proof}

\appendix
\section{Exact categories}

We recall the definition of an exact category (see \cite[Appendix A]{keller90}).
\begin{dfn}\label{def a1}
Let $\mathcal{A}$ be an additive category. A sequence
$$
X\xrightarrow{f}Y\xrightarrow{g}Z
$$
in $\mathcal{A}$ is called exact if $f$ is a kernel of $g$ and $g$ is a cokernel of $f$. The morphisms $f$ and $g$ are called inflation and deflation respectively. The short exact sequence is called a conflation. Let $\mathcal{S}$ be the collection of conflations closed under isomorphism and satisfying the following axioms

A0 $1_{0}:0\rightarrow0$ is a deflation.

A1 The composition of two deflations is a deflation.

A2 For every $h\in\Hom(X,X^{\prime})$ and every inflation $f\in\Hom(X,Y)$ in $\mathcal{A}$, there exists a pushout
\begin{equation*}
\xymatrix{
X \ar[r]^f \ar[d]_h & Y \ar[d]^{h^{\prime}} \\
X^{\prime} \ar[r]^{f^{\prime}} & Y^{\prime}
}
\end{equation*}
where $f^{\prime}\in\Hom(X^{\prime},Y^{\prime})$ is an inflation.

A3 For every $l\in\Hom(Z^{\prime},Z)$ and every deflation $g\in\Hom(Y,Z)$ in $\mathcal{A}$, there exists a pullback
\begin{equation*}
\xymatrix{
  Y^{\prime} \ar[r]^{g^{\prime}} \ar[d]_{l^{\prime}} & Z^{\prime} \ar[d]^{l} \\
  Y \ar[r]^{g} & Z   }
\end{equation*}
where $g^{\prime}\in\Hom(Y^{\prime},Z^{\prime})$ is an deflation.
Then $(\A,\mathcal{S})$ is called an exact category.
\end{dfn}

The definition of idempotent complete is taken from\cite[Definition 6.1]{buhler10}.
\begin{dfn}\label{def a3}
Let $\A$ be an additive category. The category $\A$ is idempotent complete if for every idempotent morphism $s:A\rightarrow A$ in $\A$, $s$ has a kernel $k:K\rightarrow A$ and a image $i:I\rightarrow A$ (a kernel of a cokernel of $s$) such that $A\cong K\oplus I$. We write $A\cong\text{Ker}s \oplus\text{Im}s$, for simplicity.
\end{dfn}

\end{document}